\newtheorem{thm}{Theorem}[section]
\newtheorem{cor}[thm]{Corollary}
\newtheorem{conj}[thm]{Conjecture}
\newtheorem{lemma}[thm]{Lemma}
\theoremstyle{definition}
\newtheorem{ex}[thm]{Example}
\newcommand{\dist}{\text{dist}}
\begin{document}

\title{Randi\'c index, radius, and diameter for cactus graphs}

\author[M. Doig]{Margaret I. Doig}
\address{Department of Mathematics, Creighton University}
\email{margaretdoig@creighton.edu}
\date{\today}

\keywords{connectivity index; Randi\'c index; cactus; radius; diameter}
\subjclass[2010]{Primary 05C09, Secondary 92E10}
\thanks{Supported by CURAS Summer Faculty Research Fund}

\begin{abstract}
We study the Randi\'c index for cactus graphs. It is conjectured to be bounded below by radius (for other than an even path), and it is known to obey several bounds based on diameter. We study radius and diameter for cacti then verify the radius bound and strengthen two diameter bounds for cacti. Along the way, we produce several other bounds for the Randi\'c index in terms of graph size, order, and valency for several special classes of graphs, including chemical nontrivial cacti and cacti with starlike BC-trees.
\end{abstract}

\maketitle

\section{Introduction}

\subsection*{Scientific motivation} 

A number of molecular properties appear to depend on the shape of a molecule and vary even between different arrangements of the same atoms; for example, the boiling points of hexane isomers appear to be correlated to the surface area of the molecular cloud, which reflects the degree and location of branching of the molecule. In 1975, Milan Randi\'c proposed a molecular branching index based entirely on molecular graphs in an attempt to mathematically characterize branching in a way consistent with boiling point and other structure-related properties such as the enthalpy of formation of alkanes and the relationship of vapor pressure to temperature. Besides simplifying study of such properties and allowing their prediction for novel molecules, such a mathematical characterization also reveals a correlation between preexisting constants such as some of the Antoine coefficients and therefore allows a reduction in the number of constants which must be experimentally determined \cite{randic1975characterization}. Since then, Randi\'c's index has become a standard tool for evaluating molecular structure in quantitative structure-activity relationship (QSAR) models, that is, regressive models that predict biological activity, physicochemical properties, and toxicological responses of chemical compounds based on their molecular structure (see, for example, \cite{kier1986molecular, pogliani2000molecular, garcia2008some, todeschini2008handbook, kier2012molecular}).

\subsection*{Mathematical investigations}
As it depends exclusively (at least as phrased initially) on the arrangement of atoms within a molecule, the Randi\'c index is a graph theoretic invariant. Randi\'c's original formulation was based on the graph adjacency matrix, and the common formula is due to Balaban in 1982 \cite{balaban1982discriminating}. The \emph{Randi\'c index} of a graph $G$ is
\[R=R(G) = \sum_{e} w(e)\]
where the sum runs over all edges $e$ and $w(e)$ is a weight assigned to each edge, that is, if $e=uv$ is an edge in $E$ and $d_u$ is the degree of a vertex $u$, then
\[w(e) = \frac{1}{\sqrt {d_u d_v}}.\]
This invariant was recast by Caporossi, Gurman, Hansen, and Pavlovic in 2003 \cite{caporossi2003graphs} as 
\[R = \frac{n-n_0}{2} - \sum_{e\in E} w^\ast(e)\]
where $n$ is the number of vertices and $n_0$ the number of isolated vertices, and $w^\ast(e)$ is a measure of the asymmetry of edge weights, that is, 
\begin{equation}\label{eqn:weight}w^\ast(e) = \frac{1}{2}\left(\frac{1}{\sqrt{d_u}}-\frac{1}{\sqrt{d_v}}\right)^2.\end{equation}
An immediate result, as indicated by the authors, is an upper bound on the Randi\'c index:
\[R \leq \frac{n}{2}\]
and, in fact, this bound is attained by regular graphs $K_n$, where every vertex in each component has the same degree and therefore every edge has $w^\ast = 0$.

Due to its success modeling physicochemical properties of molecules, there is a great deal of interest in approximating the Randi\'c index and understanding how it changes under certain structural alterations, or bounding it on particular classes of graphs. It has been compared to the minimum/maximum degrees of a graph~\cite{bollobas1998extremal, suil2018sharp, divnic2013proof, liu2013conjecture}, chromatic index~\cite{li2010relation}, average path length~\cite{caporossi2000variable}, graph eigenvalues~\cite{araujo1998connectivity, aouchiche2006variable, elphick2015bounds}, graph matching~\cite{aouchiche2006variable}, and radius and diameter. Our efforts focus on the last two, which we will define below.

Radius was first explicitly connected to the Randi\'c index in the ground-breaking work in the late 1980s by Fajtlowicz, who used a computer search of a large database of graphs and invariants to conjecture possible relationships between invariants. He proposed the radius as a possible lower bound for $R$: 
\begin{conj}\cite{fajtlowicz1988graffiti}\label{conj:r}
Let $G$ be a graph. If it is an even path, then:
\[R -r \geq \sqrt 2 - \frac{3}{2}\]
and, otherwise,
\[R -r \geq 0.\]
\end{conj}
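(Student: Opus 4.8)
Since the paper's focus is cactus graphs, I outline the argument in that setting. A connected cactus has no isolated vertices, so the Caporossi--Gurman--Hansen--Pavlovi\'c identity reads $R(G)=\tfrac{n}{2}-\sum_{e}w^{\ast}(e)$, and the conjectured bound $R(G)\ge r(G)$ becomes the combinatorial statement
\[
\sum_{e\in E(G)}w^{\ast}(e)\ \le\ \frac{n}{2}-r(G):
\]
the total edge irregularity must not exceed the slack between the universal bound $n/2$ and the radius. A direct computation on $P_n$ (two pendant edges of weight $1/\sqrt2$, all others of weight $1/2$) gives $\sum_e w^{\ast}(e)=\tfrac32-\sqrt2$ and $r(P_n)=\lfloor n/2\rfloor$, so $R(P_n)-r(P_n)=\sqrt2-\tfrac32$ when $n$ is even and $\sqrt2-1$ when $n$ is odd. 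This settles the even path with equality and marks it as the one configuration that must be protected throughout.

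For all other cacti the plan is induction on $n$, using that every block of a cactus is an edge or a cycle. The base cases are the cycles ($R(C_n)=n/2\ge\lfloor n/2\rfloor=r$), the stars ($R=\sqrt{n-1}\ge1=r$), the small cacti, and the even paths (handled by the exact value above, which is then \emph{fed into} the induction). For the inductive step let $G$ be a cactus that is neither a path nor a cycle. The main tool is deletion of a pendant vertex $v$ with neighbour $u$: a short calculation, using $\sum_{w\sim u,\,w\neq v}1/\sqrt{d_w}\le d_u-1$, gives
\[
R(G)-R(G-v)=\frac{1}{\sqrt{d_u}}+\Bigl(\frac{1}{\sqrt{d_u}}-\frac{1}{\sqrt{d_u-1}}\Bigr)\sum_{w\sim u,\,w\neq v}\frac{1}{\sqrt{d_w}}\ \ge\ \sqrt{d_u}-\sqrt{d_u-1}>0,
\]
so a pendant deletion never raises $R$; moreover a pendant is never a centre, so $r(G)-1\le r(G-v)\le r(G)$. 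Hence if some pendant $v$ has $r(G-v)=r(G)$, then
\[
R(G)-r(G)=\bigl(R(G-v)-r(G-v)\bigr)+\bigl(R(G)-R(G-v)\bigr)\ \ge\ 0
\]
by the inductive hypothesis, or by the exact even-path value if $G-v$ happens to be an even path.

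Two cases resist this. First, a cactus with no pendant vertex has all leaf blocks equal to cycles, so it contains a pendant cycle $C_\ell$ meeting the rest of $G$ in one cut vertex; I would reduce this configuration by deleting an edge of $C_\ell$ (creating a pendant) or by contracting it, carefully tracking $\sum w^{\ast}$ and $r$. Second, and most delicately, when $G$ is not a path yet \emph{every} pendant deletion lowers the radius, $G$ must be path-like near its ends; here I would show $G$ arises from a shorter path or near-path by a single local modification and finish by a finite case check, possibly invoking the auxiliary bounds for chemical cacti and for cacti with starlike BC-trees produced along the way. The core obstacle is the zero slack at the even path: any reduction that drops the radius by $1$ while dropping $R$ by less than $1$ is fatal, and this is precisely what occurs along $\cdots\to P_{2k}\to P_{2k-1}\to\cdots$. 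This is what forces both the exact propagation of the constant $\sqrt2-\tfrac32$ through the recursion and the disciplined choice of which block to remove; making the pendant-cycle reduction and the radius-minimal cacti cooperate without upsetting this balance is where the real work lies.
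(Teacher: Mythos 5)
Your reduction ``delete a pendant $v$, gain $R(G)-R(G-v)\ge\sqrt{d_u}-\sqrt{d_u-1}$, lose at most $1$ in the radius'' is correct as far as it goes, but it only closes the induction when you can find a pendant whose deletion \emph{preserves} the radius: since $\sqrt{d_u}-\sqrt{d_u-1}\le\sqrt2-1<1$ whenever $d_u\ge2$, any deletion that drops $r$ by one loses more than it gains. You name the two situations where this happens --- cacti with no pendant vertices at all (every leaf block a cycle, including every nontrivial cactus), and cacti where every pendant deletion lowers the radius --- and for both you offer only a plan (``delete or contract an edge of a pendant cycle,'' ``show $G$ is path-like near its ends and do a finite case check''). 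Those are precisely the cases that carry all the difficulty, and neither sketch is obviously repairable: cutting open a pendant cycle changes two vertex degrees and can move both $R$ and $r$ in either direction, and the class of cacti in which every pendant deletion lowers the radius is not confined to near-paths once cycles are present. So the proposal has a genuine gap at its core.

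The paper sidesteps the deletion difficulty entirely by arguing bottom-up. It first extracts a subgraph $H_r$ realizing the radius (Lemma~\ref{lemma:realize_rd}), whose BC-tree is starlike with root the central block; it then proves a valency-based lower bound $R\ge\frac{n}{2}-(k-1)(\frac32-\sqrt2)-m(\frac32-\frac{1}{\sqrt3}-\sqrt{\frac23})$ for such graphs (Theorem~\ref{thm:star_cactus_osv}, with Corollary~\ref{cor:chemical_ntc_osv} for the bridgeless case) and pairs it with the upper bound $r\le\frac{n-k-m+2}{2}$ on the radius (Lemma~\ref{lemma:rd_vs_n}), where $m$ is the number of articulation points of the central block; subtracting gives $R-r\ge0$ on $H_r$ with explicit slack. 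Finally it rebuilds $G$ from $H_r$ by adding cycles and pendants, each addition leaving $r$ fixed by construction and increasing $R$ by a controlled amount (Lemmas~\ref{lemma:leaf} and~\ref{lemma:cycle}). Because every step adds rather than removes structure, the radius never drops and the even-path degeneracy is quarantined in the tree case (Theorem~\ref{thm:tree_rd}). If you want to rescue your top-down induction, you would essentially have to reprove the content of Theorem~\ref{thm:star_cactus_osv} and Lemma~\ref{lemma:rd_vs_n} inside your two resistant cases.
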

\noindent Note the original conjecture was that $R \geq r-1$, which was later modified to $R \geq r$ for graphs other than even paths, where $R \approx r-0.1$.

Caporossi and Hansen verified $R-r \geq \sqrt 2 - \frac{3}{2}$ for trees in 2000~\cite{caporossi2000variable}; Cygan, Pilipczuk, and \v{S}krekovski $R-r \geq -\frac{1}{2}$ for chemical graphs in 2012 \cite{cygan2012inequality}; Liu and Gutman $R-r \geq -1$ for several special classes of graphs, including unicyclic and bicyclic graphs, in 2009~\cite{liu2009conjecture}; You and Liu $R-r \geq -1$ for tricyclic graphs and $R-r \geq 0$ for biregular graphs and graphs up to 10 vertices in 2009~\cite{you2009conjecture}. We complete the proof of the sharper bound $R-r \geq 0$ in Theorems~\ref{thm:ntc_r} and \ref{thm:cactus_r} for cacti (graphs in which no two cycles share an edge):
\begin{cor}\label{cor:r}
Let $G$ be a cactus. If $G$ is not an even path,
\[R-r \geq 0.\]
In fact, if $G$ has $k>0$ cycles,
\[R-r \geq (k-1)\left(\sqrt 2 - 1\right)\]
and, if $G$ has $m>0$ bridges,
\[R-r \geq (k-1)\left(\sqrt2-1\right)+m\left(\frac{1}{\sqrt 3} + \sqrt\frac{2}{3}-1\right) - \frac{1}{2}.\]
\end{cor}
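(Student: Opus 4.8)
The plan is to prove Corollary~\ref{cor:r} by pairing an upper bound on the radius $r$ of a cactus with a lower bound on its Randi\'c index $R$, both expressed through the order $n$, the number $k$ of cycles, the number $m$ of bridges, and the cycle lengths $\ell_1,\dots,\ell_k$. Two identities do most of the work: the recasting $R=\tfrac n2-\sum_e w^\ast(e)$ from \cite{caporossi2003graphs} (with $n_0=0$ since $G$ is connected on $n\ge 2$ vertices), which turns the Randi\'c estimate into an \emph{upper} bound on the total asymmetry $W:=\sum_e w^\ast(e)\ge 0$; and the cactus edge count $|E|=n-1+k=m+\sum_i\ell_i$, which lets the parameters be exchanged for one another.

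I would first dispose of the trees. If $G$ is an even path nothing new is asserted; for a general tree the assertions reduce, using $R(P_n)=\sqrt2+\tfrac{n-3}{2}$, $r(P_n)=\lfloor n/2\rfloor$, and the theorem of Caporossi and Hansen \cite{caporossi2000variable} (together with its equality case), to verifying $R-r\ge 0$ for non-even-path trees and noting that the quantitative bound with $k=0$ is then weaker. With $k=0$ handled, and since $(k-1)(\sqrt2-1)\ge 0$ once $k\ge 1$, the first inequality of the corollary follows from the second; so the real task is the two quantitative bounds for nontrivial cacti.

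For a nontrivial cactus I would induct on the block structure, at each step removing a peripheral block $B$ (a leaf of the block--cut tree) at its cut vertex $c$ --- a pendant bridge, a pendant path, or a pendant cycle --- and estimating $\Delta R-\Delta r$. The change in $R$ is controllable because any edge lying on a cycle has an endpoint of degree $2$ (a cycle vertex that is not a cut vertex), hence $w^\ast\le\tfrac14$ for it and $w^\ast=0$ unless it meets a cut vertex, while a pendant edge at a degree-$d$ vertex has $w^\ast=\tfrac12(1-d^{-1/2})^2$, which is small for small $d$. The change in $r$ is controllable through the block--cut tree: a shortest path of $G$ runs along a path of blocks, each bridge contributing $1$ and each cycle $C_\ell$ contributing at most $\lfloor\ell/2\rfloor$, so placing a center inside or next to a central block gives $r$ at most roughly half this weighted diameter, plus a correction (bounded by $\max_i\lfloor\ell_i/2\rfloor$) for reaching the interior of the peripheral blocks. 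Eliminating $\sum_i\ell_i$ with the edge count and optimizing the per-step profit should then produce the per-cycle gain $\sqrt2-1$, the per-bridge term $\tfrac1{\sqrt3}+\sqrt{2/3}-1$, and the lone constant $-\tfrac12$ coming from the extremal base configuration.

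The hard part is the bookkeeping at cut vertices, where the two estimates are coupled in the same structural direction: a cut vertex of large degree $d$ inflates $W$ (through up to $d$ incident edges whose $w^\ast$ drifts toward $\tfrac14$ or $\tfrac12$) yet deflates $r$ (it shortcuts every pair of blocks meeting there), and the inequality closes only after one shows the radius saving beats the Randi\'c loss with the right exchange rate --- uniformly over all cycle lengths, over cut vertices that happen to be adjacent along a short cycle, and over the many arrangements of bridges (long pendant paths versus pendant edges at a hub). Getting that trade-off tight enough for the stated constants, rather than merely $R-r\ge 0$, and keeping the even path out of the argument as the genuine exception, is where the effort goes; the remaining steps are routine estimation from the two displayed identities.
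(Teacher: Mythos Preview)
Your reading of the third inequality takes $m$ to be the number of bridges, as the corollary's wording literally says; but this is a misprint. Compare Theorem~\ref{thm:cactus_r}, from which the corollary is deduced: there $m$ is the number of articulation points lying on a \emph{central block}, and the hypothesis is $m\ge 2$, not ``$m$ bridges''. With your reading the statement is simply false. Take $C_4$ with $b$ pendant edges all attached at a single cycle vertex: then $k=1$, $r=2$, and $R=1+(b+\sqrt2)/\sqrt{b+2}$, so for $b=10$ one gets $R-r\approx 2.30$, while your target bound would be $10\bigl(\tfrac1{\sqrt3}+\sqrt{2/3}-1\bigr)-\tfrac12\approx 3.44$. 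So the ``per-bridge term'' you are trying to manufacture does not exist, and any argument that produced it would be wrong.

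For the first two inequalities (and the third with the corrected meaning of $m$), your outline differs substantially from the paper's route, and the difference is exactly at the point you flag as hard. You propose a direct induction peeling peripheral blocks and tracking $\Delta R-\Delta r$, then hope the trade-off at high-degree cut vertices closes with the right constants. The paper avoids that trade-off entirely. It first passes to a subgraph $H_r$ (Lemma~\ref{lemma:realize_rd}) with the \emph{same} radius as $G$ whose BC-tree is starlike with root the central block; in $H_r$ every non-central block carries at most two articulation points, so vertex degrees are at most $4$ on cycles and at most $3$ where a bridge meets a cycle. On this tame subgraph the paper bounds $R$ below by a valency computation (Corollary~\ref{cor:chemical_ntc_osv} or Theorem~\ref{thm:star_cactus_osv}) and $r$ above by a structural count (Lemma~\ref{lemma:rd_vs_n}), and subtracts; the constants $\sqrt2-1$ and $\tfrac1{\sqrt3}+\sqrt{2/3}-1$ fall out of the fixed degree pairs $(2,4)$ and $(2,3),(1,3)$, not from an optimization over all degrees. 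Only then are the remaining blocks of $G$ reattached, using the one-step estimates of Lemmas~\ref{lemma:leaf} and \ref{lemma:cycle} to show each addition increases $R$ by at least as much as the right-hand side grows. Your plan, by contrast, never isolates a bounded-degree skeleton, so the ``bookkeeping at cut vertices'' you worry about is a genuine obstacle rather than a routine estimate; it is not clear your induction can recover the sharp constants without, in effect, reinventing the $H_r$ reduction.
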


Aouchiche later introduced diameter into the discussion by proposing a pair of bounds on $R$ in terms of diameter in 2007. 
\begin{conj}\cite{aouchiche2007conjecture}
Let $G$ be a graph with $n$ vertices. Then
\[\begin{split}
R - d &\geq \sqrt 2 - \frac{n+1}{2}\\
\frac{R}{d} &\geq \frac{n-3+2\sqrt 2}{2n-2}.
\end{split}\]
\end{conj}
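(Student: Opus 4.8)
Both displayed inequalities reduce to a single diameter bound that does not mention $n$. Throughout I take $G$ connected (otherwise $d$ is undefined and the statement is read on each component), so that $n_0=0$ and the Caporossi--Gurman--Hansen--Pavlovic formula gives $R=\frac n2-\sum_e w^\ast(e)$. The plan is to first prove the bound
\[R \geq \frac{d}{2}+\sqrt 2-1,\]
which I will call $(\star)$, and then read off the conjecture from $d\leq n-1$. Indeed, $(\star)$ gives $R-d\geq -\frac d2+\sqrt2-1\geq -\frac{n-1}{2}+\sqrt2-1=\sqrt2-\frac{n+1}{2}$, which is the additive bound. For the multiplicative bound, $(\star)$ gives $\frac Rd\geq \frac12+\frac{\sqrt2-1}{d}$, and since $\sqrt2-1>0$ the right side is decreasing in $d$, so using $d\leq n-1$ once more yields $\frac Rd\geq \frac12+\frac{\sqrt2-1}{n-1}=\frac{n-3+2\sqrt2}{2n-2}$. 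Thus the entire problem collapses to $(\star)$, whose extremal object is the path: for $P_n$ one has $R=\frac{n-3}{2}+\sqrt2$ and $d=n-1$, so equality holds simultaneously in $(\star)$ and in both displayed bounds.

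To prove $(\star)$ I would run a charging argument along a diametral geodesic. Fix a shortest path $P=v_0v_1\cdots v_d$ realizing the diameter. Since $P$ is geodesic it is chordless ($v_iv_j\in E$ forces $|i-j|=1$), and any vertex off $P$ is adjacent only to a window of at most three consecutive $v_i$, for otherwise $P$ could be shortcut. The starting observation is that if every internal $v_i$ had degree $2$ and the endpoints degree $1$, the $d$ path edges alone would contribute exactly $2\cdot\frac1{\sqrt2}+(d-2)\cdot\frac12=\frac d2+\sqrt2-1$, so the bound is met with equality. In a general graph the path-edge weights $\frac{1}{\sqrt{d_{v_{i-1}}d_{v_i}}}$ are depressed by the extra degree coming from off-path neighbors, so I would assign to each off-path vertex a charge equal to the total weight of its incident edges and show this charge at least covers the deficit it forces at the path vertices it attaches to, with the two end edges supplying the residual $\sqrt2-1$. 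Equivalently, in the recast form $(\star)$ reads $\sum_e w^\ast(e)\leq \frac{n-d-1}{2}+\bigl(\tfrac32-\sqrt2\bigr)$, i.e. the total edge-asymmetry is at most $\frac12$ per off-path vertex plus the baseline $\frac32-\sqrt2$ that the two end edges of a path already realize; this reformulation isolates precisely the quantity the charging must control.

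The main obstacle will be converting this local, window-of-three structure into the global inequality for \emph{arbitrary} graphs. The window property keeps off-path attachments local, but those attachments may be dense: cliques or other $2$-connected blocks whose internal edges join high-degree vertices and therefore carry very small weight, so it is not clear that each off-path vertex earns enough charge to pay for the degree it imposes on the geodesic. This accounting goes through cleanly when the off-path structure is simple (each block an edge or a chordless cycle), which is exactly why the cactus hypothesis is tractable; the genuine difficulty of the conjecture is to carry the same bookkeeping through arbitrarily dense blocks. I would attempt this from two directions: an induction that deletes a well-chosen off-path vertex $w$ (note $G-w$ still contains $P$, so its diameter does not decrease, which gives extra slack in $(\star)$), arranging that the decrease in $R$ is offset by the non-decrease in diameter; or a direct convexity estimate on $\sum_e w^\ast(e)$ handled window by window. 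In either route, exhibiting a beneficial deletion, or a workable per-window bound, in the presence of dense off-path blocks is where I expect the real work to lie.
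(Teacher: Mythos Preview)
The paper does not prove this statement at all: it is quoted as Aouchiche's conjecture and then recorded as having been settled elsewhere (Yang--Lu and Liu--Liang--Cheng--Liu). So there is no ``paper's own proof'' to compare against; the paper's contribution begins only with the sharper cactus bounds in Corollaries~\ref{cor:r} and~\ref{cor:d}.

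That said, your plan has a structural issue worth flagging. Your reduction to $(\star)$, namely $R\ge \tfrac{d}{2}+\sqrt2-1$ for \emph{all} connected graphs, is valid---but $(\star)$ is strictly stronger than the conjecture whenever $d<n-1$, since your derivation uses $d\le n-1$ in only one direction. In the paper's own survey (Theorem~\ref{thm:d}), the bound $R-\tfrac{d}{2}\ge\sqrt2-1$ is recorded only for trees, and the paper's later work (Corollary~\ref{cor:d}) extends a version of it only to cacti. So you have traded Aouchiche's conjecture for a statement that, as far as the paper reports, is not known in full generality; you are now attempting more than the original problem requires.

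More importantly, the proposal is not a proof but a proof sketch that stops exactly at the hard step. The charging idea along a diametral geodesic is natural, and the window-of-three property is correct, but you explicitly acknowledge that the accounting for dense off-path blocks is where ``the real work'' lies, and you do not carry it out. Neither the deletion induction nor the per-window convexity estimate is executed; both are left as intentions. As written, then, the proposal establishes only that $(\star)$ implies the conjecture and that $(\star)$ holds when $G$ is a path---nothing more. If you want to prove the conjecture itself rather than $(\star)$, you have an extra $\tfrac{n-1-d}{2}$ of slack to spend, which is exactly $\tfrac12$ per off-path vertex in the recast form; exploiting that slack directly (rather than aiming for the tight $(\star)$) is likely the easier route and is presumably closer to what Yang--Lu actually do.
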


Liu and Zhang verified these bounds for unicyclic graphs in 2010~\cite{zhang2010conjecture}, and Li and Shi verified $R-d$ for graphs with smallest degree at least 5 and $\frac{R}{d}$ for all graphs where the total number of vertices is not too much larger than the smallest degree~\cite{li2009randi}. Yang and Lu verified both bounds for all graphs and introduced another bound on $R-\frac{d}{2}$ that they proved for trees~\cite{yang2011randic}, and, simultaneously, J. Liu, Liang, Cheng, and B. Liu verified the bounds on $R-d$ for all graphs and $\frac{R}{d}$ for trees~\cite{liu2011proof}.
\begin{thm}\label{thm:d} 
Let $G$ be a graph on $n$ vertices and $e$ edges. Then: 
\begin{itemize}
\item \cite{yang2011randic,liu2011proof} \[R - d \geq - \frac{e}{2} + \sqrt 2 - 1 ,\]
\item \cite{yang2011randic} \[\frac{R}{d} \geq \frac{n-3+2\sqrt 2}{n+e-1},\]
\item \cite{yang2011randic} and, if $G$ is a tree, \[R-\frac{d}{2} \geq \sqrt 2 - 1,\]
\end{itemize}
all with equality iff $G$ is a path.
\end{thm}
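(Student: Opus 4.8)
All three inequalities rest on the same principle: the Randi\'c index is bounded below in terms of the length of a geodesic realizing the diameter. Write \(R=\sum_{uv\in E}w(uv)\) with \(w(uv)=(d_ud_v)^{-1/2}\), and recall from \eqref{eqn:weight} that \(R=\tfrac n2-\sum_e w^\ast(e)\). Fix a diametral path \(P\colon v_0v_1\cdots v_d\). Being a shortest path, \(P\) has no chord, so \(E(P)\) is disjoint from \(E(G)\setminus E(P)\); also \(\deg v_0,\deg v_d\ge1\) and \(\deg v_i\ge2\) for \(0<i<d\), so in particular \(e\ge d\). Let \(E_P\) be the set of edges meeting \(P\). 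Since \(w>0\) on every edge, the first inequality — which rearranges to \(\sum_{uv\in E}(w(uv)+\tfrac12)\ge d+\sqrt2-1\) — will follow as soon as \(\sum_{uv\in E_P}(w(uv)+\tfrac12)\ge d+\sqrt2-1\), and the right-hand side is exactly the value of \(\sum_{uv\in E_P}(w(uv)+\tfrac12)\) when \(G=P\) (for \(d\ge2\)).

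The crux is therefore a local estimate along \(P\) comparing an arbitrary \(G\) with the path. The natural move is to split \(\sum_{uv\in E_P}(w(uv)+\tfrac12)\) over \(v_0,\dots,v_d\), giving each \(v_i\) half of the \((w+\tfrac12)\) of each edge of \(P\) at it and all of the \((w+\tfrac12)\) of each incident edge leaving \(P\), and to prove that each \(v_i\) receives at least what it receives in the path. The engine behind such a pointwise bound is a one-variable convexity inequality: raising \(D_i:=\deg v_i\) above \(2\) decreases \(w(v_{i-1}v_i)+w(v_iv_{i+1})\) by a controlled amount but creates \(D_i-2\) new incident edges, each of weight at least \((D_i\Delta)^{-1/2}\) with \(\Delta\) the largest degree among the neighbours of \(v_i\), and one checks that this gain plus the recovered \(\tfrac12\)'s dominates the loss, the worst case being that the new edges run to high-degree vertices. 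The difficulty — the step I expect to be the main obstacle — is that a naive per-vertex split does \emph{not} close: a degree-\(1\) end \(v_0\) adjacent to a high-degree \(v_1\) contributes \emph{less} than its path value, and the deficit must be repaid by \(v_1\). So the estimate has to be amortized over consecutive vertices (or over each edge of \(P\) together with its pendant edges) rather than done vertex by vertex, and keeping the constants balanced across the end, near-end, and deep-interior regimes is fiddly. Equality throughout forces \(D_i=2\) for \(0<i<d\), \(\deg v_0=\deg v_d=1\), and no edge outside \(E_P\), i.e.\ \(G=P\).

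For the tree bound \(R-\tfrac d2\ge\sqrt2-1\) there is far more slack (now \(e=n-1\) and \(\tfrac d2\) replaces \(d\)), and I would instead induct on \(n\): prune a leaf, or a short pendant path, from one end of a longest path of \(T\), bound the change in \(R\) and in \(d\) by a one-edge estimate, and use small trees as base cases, with the same equality analysis. Finally, the ratio bound does \emph{not} follow from \(R-d\ge-\tfrac e2+\sqrt2-1\): combining the two recovers the inequality only for paths, as one sees already for a star. I would therefore re-run the diametral-path argument of the second paragraph to obtain the stronger-looking lower bound \(R\ge d\cdot\tfrac{n-3+2\sqrt2}{n+e-1}\) directly — the extra factor \(\tfrac{n-3+2\sqrt2}{n+e-1}\le1\) only weakens the per-unit-length demands — so the same local/amortized estimates again pinch \(G\) down to the path, where equality holds.
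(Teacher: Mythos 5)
Your reduction of the first inequality to $\sum_{uv\in E_P}\bigl(w(uv)+\tfrac12\bigr)\ge d+\sqrt2-1$ is sound, and the equality analysis is consistent; but the proof stops exactly where the theorem lives. The amortized local estimate along the diametral path --- repaying the deficit at a degree-one end $v_0$ adjacent to a high-degree $v_1$ out of $v_1$'s extra incident edges, and balancing the constants in the regimes $d_{v_1}=3,4$, large $d_{v_1}$, $d_{v_0}>1$, and $d=1$ --- is the entire content of the first two bullets, and you explicitly defer it (``the step I expect to be the main obstacle,'' ``fiddly'') rather than carry it out. A charging scheme whose solvency is asserted but never verified is a gap, not a proof; this is precisely why the general-graph case required the separate papers of Yang--Lu and Liu et al.\ that Theorem~\ref{thm:d} cites. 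The ratio bound is in the same state: the remark that the factor $\tfrac{n-3+2\sqrt2}{n+e-1}\le1$ ``only weakens the per-unit-length demands'' is not a local statement along $P$, since the denominator grows with every vertex and edge of $G$, wherever it sits; what you actually need is monotonicity of the right-hand side under $(n,e)\mapsto(n+1,e+1)$, which must be checked (it holds for trees because $n+e-1<2(n-3+2\sqrt2)$ there, but you never do this computation).

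For comparison, the paper does not reprove the general-graph case at all --- it cites it --- and its proof of the tree case (Theorem~\ref{thm:tree_rd}) is far lighter than your machinery: the diametral path $P$ is itself a tree realizing $d$ and attaining equality in all three bounds; one then adds pendants one at a time to recover $G$, each addition increasing $R$ by Lemma~\ref{lemma:leaf}, leaving $d$ fixed, and (after the check above) not increasing any right-hand side. Your leaf-pruning induction for the tree bound is essentially this argument run backwards and is fine. If you want to salvage the general-graph bullets rather than cite them, you must either write out the amortized estimate in full or restructure it as a build-up argument from $P$ (which for non-tree $G$ requires controlling how $R-d$ changes when edges, not just pendant vertices, are added --- a step Lemma~\ref{lemma:leaf} does not supply).
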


We propose sharper versions of these bounds which also consider the number of cycles and number of bridges. We verify stronger bounds on $R-d$ and $R-\frac{d}{2}$ for cacti, and we conjectures a stronger bound on $\frac{R}{d}$ which we demonstrate for the special case where the BC-tree is starlike.

\begin{cor}\label{cor:d}
Let $G$ be a cactus with $k$ cycles and $b$ bridges. If $G$ is a nontrivial cactus ($b = 0$), 
\[\begin{split}
R-d &\geq -(k-1)\left(2-\sqrt 2\right)\\
R-\frac{d}{2} &\geq \frac{n}{4}-(k-1)\left(\frac{7}{4}-\sqrt 2\right)
\end{split}\]
with equality if the graph has BC-tree a path and is longitudinally symmetric. If $k>0$ and $e>0$,
\[\begin{split}
R-d &\geq -\frac{b}{2}-(k-1)\left(2-\sqrt 2\right) - 3 + \frac{2}{\sqrt 3} + 2 \sqrt{\frac{2}{3}}\\
R-\frac{d}{2} &\geq \frac{n-b}{4}-(k-1)\left(\frac{7}{4}-\sqrt 2\right)- 3 + \frac{2}{\sqrt 3} + 2 \sqrt{\frac{2}{3}}
\end{split}\]
with equality if the graph has BC-tree a path, has two leaves, and is longitudinally symmetric. 
\end{cor}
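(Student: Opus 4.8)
The result is stated as a corollary, so the plan is to deduce it from the paper's main diameter bounds for cacti --- the $d$-analogues of Theorems~\ref{thm:ntc_r} and \ref{thm:cactus_r} --- and the real work is proving those by induction on the blocks of the block--cut tree. The easy routes will not do: Theorem~\ref{thm:d} gives an edge bound that, for fixed cycle count, degrades like $-n/2$, and combining $R-r\ge(k-1)(\sqrt2-1)$ with $r\ge d/2$ yields only $R-\tfrac d2\ge(k-1)(\sqrt2-1)$, which is strictly weaker than the claimed $R-\tfrac d2$ bound already on a chain of even cycles. So the inequalities must be proved from scratch; I treat the nontrivial case first.

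For a nontrivial cactus, induct on the number $k$ of cycles. The base $k=1$ is a single cycle $C_\ell$: here $R=\ell/2$, $d=\lfloor\ell/2\rfloor$, $n=\ell$, so $R-d\ge0$ and $R-\tfrac d2\ge\tfrac n4$, each with equality iff $\ell$ is even. For the step, peel a leaf cycle $C$ of the block--cut tree: $C$ has length $\ell$ and meets the rest in one cut vertex $v$. Put $G'=G-(C\setminus\{v\})$, a nontrivial cactus with $k-1$ cycles and $n-(\ell-1)$ vertices, and let $\delta\ge2$ (necessarily even) be the degree of $v$ in $G'$, so $v$ has degree $\delta+2$ in $G$. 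The change in $R$ is exact: $C$ contributes $\ell-2$ edges of weight $\tfrac12$ together with two edges of weight $1/\sqrt{2(\delta+2)}$ at $v$, while the $\delta$ old edges at $v$ are re-weighted downward, losing $\sum_i d_i^{-1/2}\bigl(\delta^{-1/2}-(\delta+2)^{-1/2}\bigr)\le\tfrac1{\sqrt2}\bigl(\sqrt\delta-\delta/\sqrt{\delta+2}\bigr)$ since every neighbor of $v$ in $G'$ has degree $d_i\ge2$. For the diameter, adding $C$ back creates no shortcuts, so $d(G)=\max\!\bigl(d(G'),\,\varepsilon_{G'}(v)+\lfloor\ell/2\rfloor\bigr)$, hence $d$ grows by $\max\!\bigl(0,\,\varepsilon_{G'}(v)+\lfloor\ell/2\rfloor-d(G')\bigr)$; the crucial point is that when $\delta\ge4$ --- that is, $v$ is a cut vertex of $G'$ --- one has $d(G')\ge\varepsilon_1+\varepsilon_2\ge\varepsilon_{G'}(v)+1$ for the two largest branch-eccentricities $\varepsilon_1\ge\varepsilon_2$ at $v$, so the growth is at most $\lfloor\ell/2\rfloor-1$. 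With these, it remains to verify the per-step estimates $\Delta R-\Delta d\ge-(2-\sqrt2)$ and $\Delta R-\tfrac12\Delta d-\tfrac14(\ell-1)\ge-(\tfrac74-\sqrt2)$ for all $\ell\ge3$ and even $\delta\ge2$; splitting on the parity of $\ell$ and on $\delta=2$ versus $\delta\ge4$ reduces each to a short one-variable inequality, and feeding these into the induction gives both nontrivial-cactus bounds. Tracing equality forces $\ell$ even, $\delta=2$, and $\varepsilon_{G'}(v)=d(G')$ at every step, i.e. the cycles form a chain glued at antipodal vertices: the block--cut tree is a path and the graph is longitudinally symmetric.

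To bring in $b>0$ bridges (with $k>0$), run a companion induction that strips a maximal pendant path $v=p_0-p_1-\cdots-p_a$ with $d_{p_a}=1$, deleting $p_1,\dots,p_a$ and dropping $d_v$ by one, and that contracts internal bridges joining two blocks. Stripping a pendant path again gives an exact change in $R$ (the new path edges carry weight $1/\sqrt{2(\delta+1)}$ --- or $1/\sqrt{\delta+1}$ if $a=1$ --- at $v$, then $a-2$ weights $\tfrac12$, then one weight $1/\sqrt2$, minus the re-weighting at $v$), a change of at most $a$ in $d$ --- at most $a-1$ when $v$ is a cut vertex of the reduced graph, by the same two-branch argument --- and changes $a,a$ in $b,n$; likewise each internal bridge costs at most $\tfrac12$ in $R-d$. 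Carrying the bookkeeping through produces the $-b/2$ term and, at the interface where a pendant edge abuts a degree-$2$ cycle vertex, the constant $2\bigl(\tfrac1{\sqrt3}+\sqrt{2/3}-\tfrac32\bigr)=-3+\tfrac2{\sqrt3}+2\sqrt{2/3}$, twice the cost of one such pendant edge. Since the second pair of bounds is asserted for all $k>0$, $e>0$, it also subsumes $b=0$, for which the first (sharper) pair is the special case; tracing equality drives one to a chain of even cycles with two single pendant edges at its ends --- the block--cut tree a path, two leaves, longitudinally symmetric.

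The obstacle is entirely the diameter. Unlike $R$, $n$, and $b$, which change by explicit local amounts under peeling, $d$ is governed by global structure through eccentricities, and the crude estimate $d(G)\le d(G')+\lfloor\ell/2\rfloor$ is too lossy --- it would actually break the claimed inequality when a leaf cycle or pendant path is attached at a high-degree vertex --- so the refinement $d(G')\ge\varepsilon_1+\varepsilon_2$ is indispensable, and when the block--cut tree branches one must moreover argue that appendages hung at cut vertices strictly help, offsetting those hung at peripheral degree-$2$ vertices. Pinning down exactly when every one of these inequalities is simultaneously tight is what produces, and is the delicate part of, the equality characterization.
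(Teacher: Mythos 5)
Your proposal is essentially correct but takes a genuinely different route from the paper. The paper does not induct on the whole graph at all: it first shows (Lemma~\ref{lemma:realize_rd}) that the diameter is already realized by the sub-cactus $H_d$ consisting of the blocks meeting a shortest $u$--$v$ path, a cactus whose BC-tree is a path and which is chemical; on that subgraph it combines the purely combinatorial count $d \leq \frac{n+k+b-1}{2}$ (Lemma~\ref{lemma:rd_vs_n}) with the valency-based lower bounds $R \geq \frac{n}{2}-(k-1)(\frac32-\sqrt2)$ (Corollary~\ref{cor:chemical_ntc_osv}) and, for the bridged case, Theorem~\ref{thm:star_cactus_osv}; and only then adds the remaining blocks, which by construction leave $d$ fixed, so the inductive step never has to control a diameter increment. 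Your peeling induction instead removes an arbitrary leaf block and must bound $\Delta d$ via $d(G)=\max(d(G'),\varepsilon_{G'}(v)+\lfloor\ell/2\rfloor)$ and the two-branch estimate $d(G')\geq\varepsilon_1+\varepsilon_2$; I checked your per-step inequalities for the nontrivial case ($\delta=2$ with $\ell$ even gives exactly $\sqrt2-2$ for $R-d$, and the $\delta\geq4$ and odd-$\ell$ cases have slack), so that half genuinely closes and is arguably more self-contained, at the price of the eccentricity bookkeeping you correctly identify as the delicate part. The thin spot is your bridged case: ``carrying the bookkeeping through'' conceals exactly the adjacent-block interaction problem (an internal bridge abutting a degree-$2$ cycle vertex degrades the neighboring cycle edges as well as contributing a weight-$\frac13$ edge, so its cost relative to the $-\frac b2$ budget must be charged against slack in the $(k-1)(2-\sqrt2)$ term, and one must argue the $-3+\frac{2}{\sqrt3}+2\sqrt{2/3}$ constant is incurred at most twice). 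The numbers do work out, but making them work is precisely the content of the paper's Theorem~\ref{thm:star_cactus_osv}, which does a global block-by-block weight count rather than a local peeling estimate; you would need an analogous case analysis of cycle/bridge junctions to finish your version.
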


\begin{conj}\label{conj:d_new}
Let $G$ be a cactus with $k$ cycles and $b$ bridges. For a nontrivial cactus ($b = 0$), 
\[\frac{R}{d} \geq \frac{n - (k-1)(3-2\sqrt 2)}{n+k-1}\]
with equality if the graph has BC-tree a path and is longitudinally symmetric. For a cactus with $k>0$ and $b>0$,
\[\frac{R}{d} \geq \frac{ n - (k-1)\left(3 - 2\sqrt 2 \right) - 6 + \frac{4}{\sqrt 3} + 4\sqrt\frac{2}{3}}{n+k+b-1}\]
with equality if the graph has BC-tree a path, has two leaves, and is longitudinally symmetric. 
\end{conj}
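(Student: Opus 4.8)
The plan is to deduce the conjecture from the $R-\tfrac d2$ bound of Corollary~\ref{cor:d} by a formal manipulation of the ratio, reducing matters to two purely structural facts about cacti. Write $D=n+k+b-1$ for the denominator in the conjectured bound, $N$ for the corresponding numerator, and set $B=\tfrac14(2N-D)$. A direct computation shows two things. First, $B$ is precisely the right-hand side of the matching $R-\tfrac d2$ bound in Corollary~\ref{cor:d}: in the nontrivial case ($b=0$) one gets $B=\tfrac n4-(k-1)\bigl(\tfrac74-\sqrt2\bigr)$, and in the bridge case ($k,b>0$) one gets $B=\tfrac{n-b}4-(k-1)\bigl(\tfrac74-\sqrt2\bigr)-3+\tfrac2{\sqrt3}+2\sqrt{\tfrac23}$. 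Second, $\tfrac ND=\tfrac12+\tfrac{2B}{D}$. Since $d,D>0$, the desired inequality $\tfrac Rd\ge\tfrac ND$ is equivalent to $R\ge\tfrac d2+\tfrac{2B}{D}d$, and since Corollary~\ref{cor:d} supplies $R\ge\tfrac d2+B$ it suffices to prove
\[
B\ge0
\qquad\text{and}\qquad
2d\le D=n+k+b-1 ,
\]
because then $D\ge2d$ together with $B\ge0$ yields $B\ge\tfrac{2B}{D}d$.

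For $B\ge0$ I would use the cyclomatic identity $e=n+k-1$ for a connected cactus (where $k$ is both the number of cycle blocks and the cyclomatic number) together with the observation that every edge of a cactus is either one of the $b$ bridges or lies in a cycle block, whose length $\ell_i$ is at least $3$; hence $e=b+\sum_{i=1}^k\ell_i$ and so $n-b=\bigl(\sum_i\ell_i\bigr)-k+1\ge2k+1$. Feeding $n-b\ge2k+1$ into the formula for $B$ and using $\tfrac74-\sqrt2<\tfrac12$ reduces $B\ge0$ to an elementary numerical inequality in $k$ (valid whenever $k\ge1$, which covers every case where the ratio $R/d$ is defined).

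For the diameter bound I would fix a geodesic $P$ of length $d$. In a cactus, $P$ meets each block in a single subpath and visits each cut vertex at most once; a bridge block met by $P$ contributes one edge to $P$, while a cycle block $C$ of length $\ell$ met by $P$ contributes a subpath of $P$ joining two vertices $x,y$ of $C$. Such a subpath is itself a geodesic of $G$, so its length is $d_G(x,y)\le d_C(x,y)\le\lfloor\ell/2\rfloor\le\ell/2$. Summing over the blocks met by $P$ and then enlarging to all blocks,
\[
d\ \le\ \#\{\text{bridges on }P\}+\tfrac12\sum_{\text{cycles on }P}\ell\ \le\ b+\tfrac12\sum_{i=1}^k\ell_i ,
\]
hence $2d\le2b+\sum_i\ell_i=b+e=n+k+b-1=D$. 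Combining with Corollary~\ref{cor:d} gives $\tfrac Rd\ge\tfrac12+\tfrac{2B}{D}=\tfrac ND$; equality forces $2d=D$ (all cycle blocks even and traversed at exactly half their length, with the two extreme blocks of the BC-tree realizing the diameter) together with equality in the $R-\tfrac d2$ bound, which jointly pin down the longitudinally symmetric chain of blocks appearing in the equality statement.

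The step I expect to be the real obstacle is the nonnegativity $B\ge0$, since the whole reduction depends on its sign: only for $B\ge0$ do ``$R\ge\tfrac d2+B$'' and ``$D\ge2d$'' combine to give ``$R\ge\tfrac d2+\tfrac{2B}{D}d$'', whereas for $B<0$ one would instead need $D\le2d$, which is false. So the crux is the structural bound $n-b\ge2k+1$ --- that a cactus cannot be too dense relative to its cycles and bridges --- after which everything else is routine. I note that this argument invokes the hypothesis on the BC-tree only through Corollary~\ref{cor:d}: if that corollary holds for all cacti then the same reasoning proves Conjecture~\ref{conj:d_new} without the starlike restriction, and otherwise one re-proves the required $R-\tfrac d2$ input directly in the starlike case, where the single branch vertex of the BC-tree makes the diametral path and block structure especially transparent.
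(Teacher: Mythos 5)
The paper does not prove this statement: it appears only as Conjecture~\ref{conj:d_new}, verified when the BC-tree is a path, with the author explicitly flagging the inductive step (the right-hand side of the ratio bound is not monotone under addition of a cycle) as the obstruction. Your argument, by contrast, appears to be a complete proof, and I find no gap in it. The algebra is right: with $D=n+k+b-1$ and $N$ the conjectured numerator, $\tfrac14(2N-D)$ is in each case exactly the right-hand side $B$ of the corresponding $R-\tfrac d2$ bound of Corollary~\ref{cor:d}, so $\tfrac ND=\tfrac12+\tfrac{2B}{D}$ and the conjecture is equivalent to $R\ge\tfrac d2+\tfrac{2B}{D}d$. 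Corollary~\ref{cor:d} (which the paper proves for all cacti in the two relevant cases, via Theorems~\ref{thm:ntc_d} and~\ref{thm:cactus_d}, not just for path-like BC-trees) gives $R\ge\tfrac d2+B$; the inequality $2d\le D$ is the paper's own Lemma~\ref{lemma:rd_vs_n}, which your geodesic argument independently reproves; and $B\ge0$ does follow from $n-b\ge 2k+1$: in the nontrivial case $B\ge k\bigl(\sqrt2-\tfrac54\bigr)+\bigl(2-\sqrt2\bigr)>0$, and in the bridge case one subtracts a further $3-\tfrac2{\sqrt3}-2\sqrt{2/3}\approx0.212$, leaving $B\ge k\bigl(\sqrt2-\tfrac54\bigr)+0.37>0$; and $k\ge1$ holds in both cases of the conjecture, since a nontrivial cactus with $k=0$ is a single vertex and has no defined ratio. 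Your equality discussion is also consistent, since the sufficient conditions in Corollary~\ref{cor:d} and Lemma~\ref{lemma:rd_vs_n} are realized simultaneously by a longitudinally symmetric chain of blocks. The essential difference from the paper's attempted route is that you never induct on the ratio: you apply the additive bound globally to the finished graph and absorb the slack using $D\ge2d$, which is precisely the mechanism by which the tree case of Theorem~\ref{thm:d} follows from $R-\tfrac d2\ge\sqrt2-1$ and $d\le n-1$. The one caveat is that your proof is exactly as strong as its inputs --- it inherits any defects in the paper's proofs of Corollary~\ref{cor:d} and Lemma~\ref{lemma:rd_vs_n} --- but as a reduction it is sound, and it settles the conjecture in full rather than only in the starlike case.
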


In the process of proving our bounds, we also develop several bounds on the Randi\'c index of cacti in terms of valency. This falls into the established literature of bounds on $R$ in terms of graph order or size (see \cite{favaron2003randic} for a survey). There are some bounds like those of Caporossi, Gutman, Hansen, and Pavlovi\'c that, fixing $n$, $R$ attains its maximum on complete graphs~\cite{caporossi2003graphs}, or of Yang and Lu that it attains its maximum among trees on paths of length at least 2~\cite{yang2011randic}. Similarly, Bollob\'as and Erd\"os showed that $R$ reaches its minimum on the star~\cite{bollobas1998extremal}, and Lu, Zhang, and Tian that it reaches its minimum among cacti on a bouquet of triangles and pendants~\cite{lu2006randic}. There is also a thriving literature of bounds in terms of maximum or minimum valency (including \cite{gutman2000graphs, aouchiche2007variable, suil2018sharp, suil2018sharp}), but we are unaware of any which consider a sum of valencies. We prove:
\begin{cor}\label{cor:valency}
Let $G$ be a cactus on $n$ vertices with $k$ cycles and $b$ bridges where $d_v$ is the degree of a vertex $v$. If $G$ is a tree ($k=0$), 
\[R \geq 1 - n + \sum_v \sqrt{d_v}\]
with equality for a star. Otherwise,
\[R \geq \frac{1 - n-k}{2} - b\left(\frac{3}{2}-\sqrt2\right) + \sum_v \sqrt{\frac{d_v}{2}} \]
with equality for a nontrivial cactus ($b=0$) none of whose articulation points are adjacent.
\end{cor}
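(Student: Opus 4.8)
The plan is to reduce both bounds to a single per-edge inequality after redistributing the vertex sum over the edges. The starting point is the elementary identity
\[\sum_v \sqrt{d_v} = \sum_v d_v \cdot \frac{1}{\sqrt{d_v}} = \sum_{e=uv}\left(\frac{1}{\sqrt{d_u}} + \frac{1}{\sqrt{d_v}}\right),\]
obtained by splitting each vertex's contribution $d_v/\sqrt{d_v}$ among its incident edges. Combined with $R = \sum_{e=uv}(d_ud_v)^{-1/2}$ and the fact that a cactus on $n$ vertices with $k$ cycles has exactly $n-1+k$ edges (the cycles being edge-disjoint, so $k$ is the circuit rank), each claimed inequality becomes a sum of one local term per edge.

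For the tree case $k=0$ we have $|E|=n-1$, so $1-n+\sum_v\sqrt{d_v} = \sum_{e=uv}\left(\frac1{\sqrt{d_u}}+\frac1{\sqrt{d_v}}-1\right)$ and the claim is equivalent to $\sum_{e=uv}\left[(d_ud_v)^{-1/2}-d_u^{-1/2}-d_v^{-1/2}+1\right]\ge 0$. The bracketed quantity factors as $\left(1-d_u^{-1/2}\right)\left(1-d_v^{-1/2}\right)\ge 0$ since every degree is $\ge 1$. Equality forces every edge to have a degree-one endpoint; since a tree containing a path on four vertices has an edge joining two vertices of degree $\ge 2$, equality holds iff the tree has diameter $\le 2$, i.e. iff $G$ is a star.

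For the cactus case $k>0$, the same bookkeeping with $|E|=n-1+k$ yields
\[R - \left(\frac{1-n-k}{2} - b\left(\tfrac32-\sqrt2\right) + \sum_v\sqrt{\tfrac{d_v}{2}}\right) = \sum_{e=uv}\left(\frac{1}{\sqrt{d_u}}-\frac{1}{\sqrt2}\right)\left(\frac{1}{\sqrt{d_v}}-\frac{1}{\sqrt2}\right) + b\left(\tfrac32-\sqrt2\right),\]
so it suffices to show the edge-sum on the right is at least $-b(\tfrac32-\sqrt2)$. I would split the edges into cycle edges and bridges. A cycle edge of a cactus joins two vertices lying on a common cycle, hence both of degree $\ge 2$, so both factors in its term are $\le 0$ and its term is nonnegative; these edges are harmless. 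Thus everything comes down to the bridges, and the term of a bridge $uv$ is negative only when one endpoint, say $u$, is a leaf, where it equals $\left(1-\tfrac1{\sqrt2}\right)\left(d_v^{-1/2}-\tfrac1{\sqrt2}\right)$, which decreases as $d_v$ grows. I expect this to be the main obstacle: bounding this uniformly by $-(\tfrac32-\sqrt2)$ per bridge is immediate once $d_v$ is bounded (for chemical cacti $d_v\le 4$ gives $1-\tfrac{3}{2\sqrt2}\ge\sqrt2-\tfrac32$), but in general one must charge the negative contribution of a pendant bridge against the surplus positive contributions of the many cycle edges meeting its high-degree cut-vertex endpoint; this pairing is the technical heart, and it is where the inductive structural reductions behind Theorems~\ref{thm:ntc_r} and \ref{thm:cactus_r} do the work.

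Finally, for equality when $k>0$: it forces $b=0$ and forces every cycle edge to have a degree-two endpoint. In a nontrivial cactus every non-cut-vertex on a cycle has degree exactly $2$ while every cut vertex has degree $\ge 4$, so the condition says no cycle edge joins two cut vertices, i.e. the articulation points form an independent set; conversely such a graph achieves equality term by term, matching the stated characterization.
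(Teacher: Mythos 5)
Your tree case is complete and correct, and it takes a genuinely different route from the paper: the paper proves Theorem~\ref{thm:tree_osv} by induction on pendant addition via Lemma~\ref{lemma:leaf}, whereas your redistribution $\sum_v\sqrt{d_v}=\sum_{uv\in E}\bigl(d_u^{-1/2}+d_v^{-1/2}\bigr)$ collapses the whole inequality into the single per-edge factorization $\bigl(1-d_u^{-1/2}\bigr)\bigl(1-d_v^{-1/2}\bigr)\ge 0$. That is non-inductive, shorter, and yields the full ``only if'' direction of the equality case (star) for free.

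The cactus case, however, breaks at exactly the step you defer. Your reduction of the claim to
\[\sum_{uv\in E}\left(\frac{1}{\sqrt{d_u}}-\frac{1}{\sqrt 2}\right)\left(\frac{1}{\sqrt{d_v}}-\frac{1}{\sqrt 2}\right)\ \ge\ -b\left(\frac{3}{2}-\sqrt 2\right)\]
is correct algebra, but the proposed charging of a pendant's deficit against ``surplus positive contributions of the many cycle edges meeting its high-degree cut-vertex endpoint'' cannot be carried out: in a cactus the non-articulation vertices of every cycle have degree exactly $2$, so every cycle edge at the hub can contribute exactly $0$ and there is no surplus to charge against. Since $\frac{3}{2}-\sqrt 2=\bigl(1-\frac{1}{\sqrt 2}\bigr)^2$, a pendant at a vertex of degree $d_v$ exceeds its per-bridge budget as soon as $\frac{1}{\sqrt 2}-\frac{1}{\sqrt{d_v}}>1-\frac{1}{\sqrt 2}$, i.e.\ $d_v\ge 6$. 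Concretely, for the bouquet of three triangles with one pendant at the common vertex ($n=8$, $k=3$, $b=1$, hub degree $7$) one has $R=\frac{6}{\sqrt{14}}+\frac{3}{2}+\frac{1}{\sqrt 7}\approx 3.4815$, while the claimed lower bound is $\approx 3.4922$; in your decomposition the single bridge term is $\bigl(1-\frac{1}{\sqrt 2}\bigr)\bigl(\frac{1}{\sqrt 7}-\frac{1}{\sqrt 2}\bigr)\approx-0.0964<-0.0858$ and all other terms vanish. So the second inequality of the corollary is false as stated and no completion of your argument is possible. The defect originates in the paper's own proof of Theorem~\ref{thm:cactus_osv}, whose pendant-addition step uses $\sqrt{d_v+1}-\sqrt{d_v}\ge\frac{1}{\sqrt 2}\bigl(\sqrt{d_v+1}-\sqrt{d_v}\bigr)+\frac{3}{\sqrt 2}-2$, an inequality that holds with equality at $d_v=1$ but fails for every $d_v\ge 2$. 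Your edge-local decomposition is actually the right diagnostic --- it isolates the failure to pendants at high-degree articulation points --- and your own observation that $d_v\le 4$ gives $1-\frac{3}{2\sqrt 2}\ge\sqrt 2-\frac{3}{2}$ shows the bound does hold (by your argument, term by term) for chemical cacti; the general statement needs such a degree hypothesis, not a cleverer pairing.
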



\subsection*{Definitions and notation.} Let $G=(V,E)$ be a graph with vertex set $V$ and edge set $E$. The degree of a vertex $v$ will be denoted $d_v$, and the size of a cycle $c$ will be $s_c$. \emph{Graph order} is the number of vertices, and \emph{graph size} is the number of edges. A \emph{cut vertex} or \emph{articulation point} is a vertex whose removal increases the number of connected components of a graph, and a \emph{cut edge} or \emph{bridge} is an edge whose removal does the same. A \emph{leaf} is a vertex of degree 1, and a \emph{pendant} is an edge with a leaf at one end. If required for intelligibility, we will write $R(G)$ for $R$, $d_v(G)$ for $d_v$, and so on.

Common graphs we use are: the \emph{path} on $n$ vertices, $P_n$; the \emph{cycle} on $n$ vertices, $C_n$; the \emph{star} on $n+1$ vertices $S_n$. A graph is \emph{starlike} with $r>2$ \emph{arms} if it has one vertex of degree $r$ called its \emph{root} to which are attached $r$ paths. We may refer to a path or cycle as \emph{even} (respectively, \emph{odd}) when it has an even (respectively, \emph{odd}) number of vertices. A \emph{tree} is a connected graph where every vertex is either a leaf or an articulation point; equivalently, every edge is a bridge. It is the connected graph with the fewest edges for a given number of vertices. A \emph{cactus} is a connected graph where any two cycles intersect in at most one point, and a \emph{nontrivial cactus} is a cactus without bridges; in other words, a nontrivial cactus may be constructed by iteratively adding cycles to one another (glued together by identifying a vertex from each), and a cactus by adding cycles and bridges. 

We may sometimes divide more complicated graphs up into smaller subgraphs for convenience. A \emph{block} is a maximal biconnected subgraph, that is, a subgraph without any articulation points; equivalently, the articulation points of a graph divide it into connected subgraphs which we call blocks. We may draw a graph of these blocks as in Figure~\ref{fig:bc-tree}, called the \emph{block cut tree} or {BC-tree}: assign a vertex in the BC-tree for each articulation point and for each block in the graph, and assign an edge in the BC-tree for each pair of an articulation point and a block which contains it. Note that it is indeed a tree.

\begin{figure}
\begin{center}
\begin{subfigure}[scale=0.5]{0.5\textwidth}
\begin{center}

\begin{tikzpicture}
\draw[red, thick] (0,0) -- (0.8, 0);
\draw[red, thick] (0,0) -- (0.4, 0.7);
\draw[red, thick] (0.8,0) -- (0.4, 0.7);

\draw[yellow, thick] (0.8,0) -- (0.8, -0.8);

\draw[green, thick] (0.8,0) -- (1.6, 0);

\draw[blue, thick] (2.4,0) -- (1.6, 0);
\draw[blue, thick] (1.6,0.8) -- (1.6, 0);
\draw[blue, thick] (2.4,0.8) -- (2.4, 0);
\draw[blue, thick] (2.4,0.8) -- (1.6, 0.8);
\draw[blue, thick] (2.4,0.8) -- (1.6, 0);
\draw[blue, thick] (2.4,0) -- (2.05, 0.35);
\draw[blue, thick] (1.95,0.45) -- (1.6, 0.8);
\draw[blue, thick] (2.4,0.8) -- (3.1, 0.4);
\draw[blue, thick] (2.4,0) -- (3.1, 0.4);

\draw[orange, thick] (3.1,0.4) -- (3.9, 0.4);

\draw[purple, thick] (2.4,0) -- (2.8, -0.7);

\filldraw[red] (0,0) circle (2pt);
\filldraw[white] (0.8,0) circle (2pt);
\draw[black] (0.8,0) circle (2pt)node[anchor=north east] {$v_1$};
\filldraw[red] (0.4,0.7) circle (2pt);
\filldraw[yellow] (0.8,-0.8) circle (2pt);
\filldraw[white] (1.6,0) circle (2pt);
\draw[black] (1.6,0) circle (2pt)node[anchor=north] {$v_2$};
\filldraw[blue] (1.6,0.8) circle (2pt);
\filldraw[white] (2.4,0) circle (2pt);
\draw[black] (2.4,0) circle (2pt)node[anchor=north west] {$v_3$};
\filldraw[blue] (2.4,0.8) circle (2pt);
\filldraw[white] (3.1,0.4) circle (2pt);
\draw[black] (3.1,0.4) circle (2pt)node[anchor=south] {$v_4$};
\filldraw[purple] (2.8,-0.7) circle (2pt);
\filldraw[orange] (3.9,0.4) circle (2pt);
\end{tikzpicture}
\end{center} 
\subcaption{}\label{fig:bc-tree-a}
\end{subfigure}
\begin{subfigure}[scale=0.5]{0.4\textwidth}
\begin{center}
\begin{tikzpicture}
\draw[black, thick] (-0.9,0.4) -- (-0.2, 0);

\draw[black, thick] (-0.2,0) -- (-0.2, -0.8);

\draw[black, thick] (-0.2,0) -- (0.6, 0);
\draw[black, thick] (1.4,0) -- (0.6, 0);

\draw[black, thick] (1.4,0) -- (1.8,0.7);

\draw[black, thick] (1.8,0.7) -- (2.2,0);
\draw[black, thick] (2.2,0) -- (2.6, -0.7);

\draw[black, thick] (1.8,0.7) -- (2.6, 0.7);
\draw[black, thick] (2.6,0.7) -- (3.4, 0.7);

\filldraw[white] (-0.2,0) circle (2pt);
\draw[black] (-0.2,0) circle (2pt)node[anchor=north east] {$v_1$};
\filldraw[red] (-0.9,0.4) circle (4pt);
\filldraw[yellow] (-0.2,-0.8) circle (4pt);
\filldraw[green] (0.6,0) circle (4pt);
\filldraw[white] (1.4,0) circle (2pt);
\draw[black] (1.4,0) circle (2pt)node[anchor=north] {$v_2$};
\filldraw[blue] (1.8,0.7) circle (4pt);
\filldraw[white] (2.2,0) circle (2pt);
\draw[black] (2.2,0) circle (2pt)node[anchor=west] {$v_3$};
\filldraw[purple] (2.6, -0.7) circle (4pt);
\filldraw[white] (2.6,0.7) circle (2pt);
\draw[black] (2.6,0.7) circle (2pt)node[anchor=south] {$v_4$};
\filldraw[orange] (3.4, 0.7) circle (4pt);
\end{tikzpicture} 
\end{center} 
\subcaption{}\label{fig:bc-tree-b}
\end{subfigure}
\end{center}
\caption{(A) A graph colored by block; articulation points belong to all incident blocks and are left uncolored. This graph has one triangle, four bridges (including three pendants), and one more complex block. (B) The corresponding BC-tree. Colored vertices correspond to blocks, hollow vertices to articulation points, and edges to articulation point/block pairs.}\label{fig:bc-tree}
\end{figure}
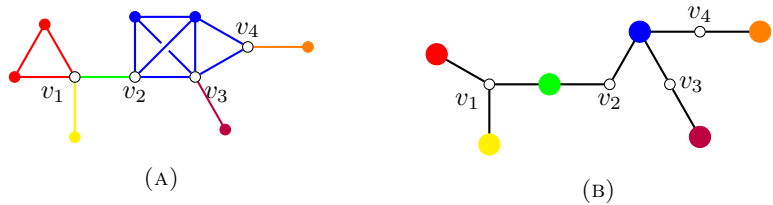

We study these graphs by use of two standard invariants, radius and diameter, and a degree-based topological index, the Randi\'c index. The \emph{distance} $\dist(u,v)$ between two vertices is the minimal length of a path between them, and the \emph{eccentricity} $\text{ecc}(v)$ of a vertex is the maximum distance from that vertex to any other vertex in its component,
\[\text{ecc}(v)=\max_{u\in V} \dist(u,v).\]
The \emph{radius} is the minimum eccentricity
\[r = \min_{v\in V} \text{ecc}(v),\]
while the diameter is the maximum distance
\[d = \max_{u,v\in V} \dist(u,v).\]

\subsection*{Organization} We will be building graphs inductively, so, in Section~\ref{sec:induction}, we introduce sample calculations to develop the reader's intuition and establish facts about some base graphs, and we also discuss how $R$ changes under addition of a pendant edge or a cycle to a graph. We will also establish in Section~\ref{sec:rd_lemmata} some facts about the radius and diameter of a cactus which we will need when we perform these block additions. Next, in Section~\ref{sec:osv}, we bound $R$ on cacti using vertex valency as well as the number of cycles and bridges. Finally, in Section~\ref{sec:rd}, we verify the bounds on $R$ in terms of $r$ and $d$ from Corollaries~\ref{cor:r} and \ref{cor:d}.

\subsection*{Acknowledgements}
Thanks to CURAS at Creighton University for summer research support. Thanks also to Anna Rossini for showing up in my office wanting to ``do something'' in mathematical chemistry, which started me on this interesting trip away from my normal research areas.

\section{Block addition and its effects on $R$}\label{sec:induction}

\subsection{Examples}

We provide a few examples for the reader's convenience. Observe that the conjectures above are all satisfied.

\begin{center}
\begin{tabular}{cccccccccccccc}
\begin{tikzpicture}
\draw[gray, thick] (0,0) -- (0.8, 0);
\filldraw[black] (0,0) circle (2pt);
\filldraw[black] (0.8,0) circle (2pt);
\end{tikzpicture} 
&
\qquad
&
\begin{tikzpicture}
\draw[gray, thick] (0,0) -- (1.6, 0);
\filldraw[black] (0,0) circle (2pt);
\filldraw[black] (0.8,0) circle (2pt);
\filldraw[black] (1.6,0) circle (2pt);
\end{tikzpicture} 
&
\qquad 
&
\begin{tikzpicture}
\draw[gray, thick] (0,0) -- (2.4, 0);
\filldraw[black] (0,0) circle (2pt);
\filldraw[black] (0.8,0) circle (2pt);
\filldraw[black] (1.6,0) circle (2pt);
\filldraw[black] (2.4,0) circle (2pt);
\end{tikzpicture} 
&
\qquad
&
\begin{tikzpicture}
\draw[gray, thick] (0,0) -- (0.42,0.14);
\draw[gray, thick] (0,0) -- (0.26,-0.36);
\draw[gray, thick] (0,0) -- (-0.26,-0.36);
\draw[gray, thick] (0,0) -- (-0.42,0.14);
\draw[gray, thick] (0,0) -- (0,0.46);
\filldraw[black](0,0) circle (2pt);
\filldraw[black] (0,0.46) circle (2pt);
\filldraw[black] (0.42,0.14) circle (2pt);
\filldraw[black] (0.26,-0.36) circle (2pt);
\filldraw[black] (-0.26,-0.36) circle (2pt);
\filldraw[black] (-0.42,0.14) circle (2pt);
\end{tikzpicture} \\
$P_2$ && $P_3$ && $P_4$ && $S_5$\\ \\
\begin{tikzpicture}
\draw[gray, thick] (0,0.44) -- (0.42,0.14);
\draw[gray, thick] (0.42,0.14) -- (0.26,-0.36);
\draw[gray, thick] (0.26,-0.36) -- (-0.26,-0.36);
\draw[gray, thick] (-0.26,-0.36) -- (-0.42,0.14);
\draw[gray, thick] (-0.42,0.14) -- (0,0.46);
\filldraw[black] (0,0.46) circle (2pt);
\filldraw[black] (0.42,0.14) circle (2pt);
\filldraw[black] (0.26,-0.36) circle (2pt);
\filldraw[black] (-0.26,-0.36) circle (2pt);
\filldraw[black] (-0.42,0.14) circle (2pt);
\end{tikzpicture} 
&
\qquad
&
\begin{tikzpicture}
\draw[gray, thick] (0.46,0) -- (0.23,0.4);
\draw[gray, thick] (0.23,0.4) -- (-0.23,0.4);
\draw[gray, thick] (-0.23,0.4) -- (-0.46,0);
\draw[gray, thick] (-0.46,0) -- (-0.23,-0.4);
\draw[gray, thick] (-0.23,-0.4) -- (0.23,-0.4);
\draw[gray, thick] (0.23,-0.4) -- (0.46,0);
\filldraw[black] (0.46,0) circle (2pt);
\filldraw[black] (0.23,0.4) circle (2pt);
\filldraw[black] (-0.23,0.4) circle (2pt);
\filldraw[black] (-0.46,0) circle (2pt);
\filldraw[black] (-0.23,-0.4) circle (2pt);
\filldraw[black] (0.23,-0.4) circle (2pt);
\end{tikzpicture} 
&
\qquad
&
\begin{tikzpicture}
\draw[gray, thick] (-0.46,0) -- (-0.92,0);
\draw[gray, thick] (0.46,0) -- (0.23,0.4);
\draw[gray, thick] (0.23,0.4) -- (-0.23,0.4);
\draw[gray, thick] (-0.23,0.4) -- (-0.46,0);
\draw[gray, thick] (-0.46,0) -- (-0.23,-0.4);
\draw[gray, thick] (-0.23,-0.4) -- (0.23,-0.4);
\draw[gray, thick] (0.23,-0.4) -- (0.46,0);
\draw[gray, thick] (0.46,0) -- (0.92,0);
\filldraw[black] (0.92,0) circle (2pt);
\filldraw[black] (0.46,0) circle (2pt);
\filldraw[black] (0.23,0.4) circle (2pt);
\filldraw[black] (-0.23,0.4) circle (2pt);
\filldraw[black] (-0.46,0) circle (2pt);
\filldraw[black] (-0.92,0) circle (2pt);
\filldraw[black] (-0.23,-0.4) circle (2pt);
\filldraw[black] (0.23,-0.4) circle (2pt);
\end{tikzpicture} 
&
\qquad
&
\begin{tikzpicture}
\draw[gray, thick] (0,0.46) -- (0.42,0.14);
\draw[gray, thick] (0,0.46) -- (0.26,-0.36);
\draw[gray, thick] (0,0.46) -- (-0.26,-0.36);
\draw[gray, thick] (0,0.46) -- (-0.42,0.14);
\draw[gray, thick] (0.26,-0.36) -- (0.42,0.14);
\draw[gray, thick] (0.26,-0.36) -- (-0.42,0.14);
\draw[gray, thick] (0.26,-0.36) -- (-0.26,-0.36);
\draw[gray, thick] (-0.26,-0.36) -- (0.42,0.14);
\draw[gray, thick] (-0.26,-0.36) -- (-0.42,0.14);
\draw[gray, thick] (-0.42,0.14) -- (0.42,0.14);
\filldraw[black] (0,0.46) circle (2pt);
\filldraw[black] (0.42,0.14) circle (2pt);
\filldraw[black] (0.26,-0.36) circle (2pt);
\filldraw[black] (-0.26,-0.36) circle (2pt);
\filldraw[black] (-0.42,0.14) circle (2pt);
\end{tikzpicture} 
\\

$C_5$ && $C_6$ && $G_6$ && $K_5$
\end{tabular}
\end{center}

\begin{ex}\label{ex:path}
For $P_2$, 
\[d=r=R=1;\]
for all longer paths $P_n$,
\[d=n-1, \qquad r = \left\lfloor \frac{n}{2} \right\rfloor, \qquad R=\frac{n-3}{2} + \sqrt 2.\]
\end{ex}

\begin{ex}\label{ex:star}
The star $S_1$ is just the path $P_2$, and all bigger stars $S_{n-1}$ satisfy: 
\[d=2, \qquad r=1, \qquad R=\sqrt {n-1}.\]
\end{ex}

\begin{ex}\label{ex:cycle}
The cycle like $C_n$ obeys: 
\[d = r = \left\lfloor \frac{n}{2} \right\rfloor, \qquad R = \frac{n}{2}.\]
\end{ex}

\begin{ex}\label{ex:lollipop}
For a graph like $G_6$ above with one cycle and two non-adjacent pendants (so, in particular, the cycle has at least 4 vertices),
\[d=2+\left\lfloor\frac{s}{2}\right\rfloor, \qquad r=\left\lfloor\frac{s}{2}\right\rfloor, \qquad R = \frac{2}{\sqrt{3}} + 2\sqrt\frac{2}{3} + \frac{s-4}{2}\]
\end{ex}

\begin{ex}\label{ex:complete}
For a complete graph $K_n$,
\[d = r = 1, \qquad R = \frac{n}{2}.\]
\end{ex}

\subsection{Changing $R$}

When we perform explicit calculations of $R$, we will construct a graph inductively by starting with a path or a cycle and adding pendants or cycles one at a time carefully. We will need a few technical lemmas about the effect on $R$ of adding a pendant or a cycle.

\begin{lemma}\label{lemma:leaf}
Let $G$ be a graph on $n$ vertices. Add a pendant $e$ at vertex $v$ to form $G+e$. Then:
\begin{enumerate}
\item\cite[Lemma~1]{bollobas1998extremal}
\[R(G+uv)-R(G) \geq \sqrt{d_v+1}-\sqrt{d_v}\]
with equality iff all vertices adjacent to $v$ are leaves. 
\item If $d_w \geq 2$ for all vertices $w$ adjacent to $v$, then
\[R(G+uv)-R(G) > \sqrt{d_v+1} - \sqrt{\frac{d_v}{2}}\]
with equality iff all vertices adjacent are degree 2.
\end{enumerate}
\end{lemma}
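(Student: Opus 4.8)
The plan is to write $R(G+uv)-R(G)$ as an explicit function of $d_v$ and of the degrees of the neighbours of $v$, and then to optimise that function under the relevant constraint on those degrees. Attaching the pendant $uv$ changes no edge of $G$ that is not incident to $v$: it replaces each edge $vw$ of weight $1/\sqrt{d_v d_w}$ by one of weight $1/\sqrt{(d_v+1)d_w}$, and it introduces the new edge $uv$, whose weight is $1/\sqrt{d_v+1}$ because $u$ is a leaf. Summing these contributions,
\[
R(G+uv)-R(G)=\frac{1}{\sqrt{d_v+1}}+\left(\frac{1}{\sqrt{d_v+1}}-\frac{1}{\sqrt{d_v}}\right)\sum_{w\sim v}\frac{1}{\sqrt{d_w}}.
\]
The structural point to emphasise is that the coefficient $\frac{1}{\sqrt{d_v+1}}-\frac{1}{\sqrt{d_v}}$ is negative, so the right-hand side is a strictly decreasing function of $S:=\sum_{w\sim v}1/\sqrt{d_w}$; a lower bound for $R(G+uv)-R(G)$ therefore follows from an \emph{upper} bound for $S$.

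For part~(1) --- the inequality of Bollob\'as and Erd\"os quoted in the statement --- one uses only $1/\sqrt{d_w}\leq 1$, so $S\leq d_v$, with equality exactly when every neighbour of $v$ is a leaf. Substituting $S=d_v$ and simplifying via $\frac{d_v+1}{\sqrt{d_v+1}}=\sqrt{d_v+1}$ and $\frac{d_v}{\sqrt{d_v}}=\sqrt{d_v}$ collapses the expression to $\sqrt{d_v+1}-\sqrt{d_v}$, and equality holds precisely in that extremal configuration.

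For part~(2) the hypothesis $d_w\geq 2$ for every neighbour $w$ sharpens the estimate to $1/\sqrt{d_w}\leq 1/\sqrt2$, hence $S\leq d_v/\sqrt2$, with equality iff every neighbour of $v$ has degree exactly $2$. Feeding this bound on $S$ into the displayed identity and carrying out the arithmetic yields the stated inequality, the equality analysis again being inherited from the bound on $S$. I expect the only genuine work here to be the algebraic bookkeeping in that final simplification, together with keeping the inequality correctly oriented: since the $S$-coefficient is negative, replacing $S$ by its maximal value $d_v/\sqrt2$ can only decrease the right-hand side and so yields a valid lower bound for $R(G+uv)-R(G)$. Matching the simplified expression to the exact form in which the bound is stated, and checking whether the inequality is strict or is attained, is the step I would verify most carefully.
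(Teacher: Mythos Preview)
Your approach is exactly the paper's: write the increment as a function of $S=\sum_{w\sim v}1/\sqrt{d_w}$, note that the coefficient of $S$ is negative, and bound $S$ above by $d_v$ (part~1) or $d_v/\sqrt2$ (part~2). Part~(1) goes through cleanly, as you describe.

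For part~(2), though, the ``algebraic bookkeeping'' you flag does \emph{not} simplify to the bound printed in the lemma. Substituting $S=d_v/\sqrt2$ gives
\[
\frac{1}{\sqrt{d_v+1}}+\frac{d_v}{\sqrt{2}\,\sqrt{d_v+1}}-\sqrt{\frac{d_v}{2}}
=\frac{\sqrt2+d_v}{\sqrt{2(d_v+1)}}-\sqrt{\frac{d_v}{2}},
\]
which is strictly \emph{smaller} than $\sqrt{d_v+1}-\sqrt{d_v/2}$, since $\sqrt2+d_v<\sqrt2\,(d_v+1)$ for $d_v\ge1$. Indeed the stated inequality is false: with $d_v=2$ and both neighbours of degree~$2$ (e.g.\ the middle vertex of $P_5$) one computes $R(G+uv)-R(G)=(1+\sqrt2)/\sqrt3-1\approx0.39$, whereas $\sqrt3-1\approx0.73$. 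The paper's own proof does not prove the printed bound either; it instead passes to the weaker but correct strict inequality
\[
R(G+uv)-R(G)>\frac{\sqrt{d_v+1}-\sqrt{d_v}}{\sqrt2},
\]
so the lemma as stated appears to contain a typo (most plausibly $\sqrt{d_v+1}$ was meant to be $\sqrt{(d_v+1)/2}$). Note too that a strict ``$>$'' followed by ``with equality iff'' is incoherent on its face. Your instinct to scrutinise precisely this step was well placed.
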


\begin{proof} 
We present the proof of the first part with slight modification from the original; we are unaware of the presence of the second part in the literature.

If we add a pendant $l$ at $v$, then the only edges whose weights are affected by the transformation are in $N(v)$, the edges incident to $v$. We define:
\[S = \sum_{w\in N(v)}\frac{1}{\sqrt{d_w}}.\]
Thus, adding a single pendant at $v$ alters $R$ by:
\[R(G+l)-R(G) = \frac{1}{\sqrt{d_v+1}}-S\left(\frac{1}{\sqrt{d_v}}-\frac{1}{\sqrt{d_v+1}}\right).\]
If we know that $\delta \leq d_w \leq D$ for all $w \in N(v)$, then
\[\frac{d_v}{\sqrt{D}} \leq S \leq \frac{d_v}{\sqrt{\delta}}.\]

Thus, if all $d_w \geq 1$, we know that $S \leq d_v$, and so also 
\[R(G+l)-R(G) \geq \frac{1}{\sqrt{d_v+1}}-d_v \left(\frac{1}{\sqrt{d_v}}-\frac{1}{\sqrt{d_v+1}}\right) = \sqrt{d_v+1}-\sqrt{d_v},\]
and this bound is sharp if $d_w=1$ for all $w \in N(v)$. Similarly, if all $d_w \geq 2$, then $S \leq \frac{d_v}{\sqrt 2}$, and
\[R(G+l)-R(G) \geq \frac{1}{\sqrt{d_v+1}}-\frac{d_v}{\sqrt 2} \left(\frac{1}{\sqrt{d_v}}-\frac{1}{\sqrt{d_v+1}}\right)
> \frac{\sqrt{d_v+1}-\sqrt{d_v}}{\sqrt 2}.\]
\end{proof}

\begin{lemma} \label{lemma:cycle}
Let $G$ be a graph. Add a cycle $c$ of size $s_c$ at vertex $v$ to form $G+c$.
\begin{enumerate}
\item \label{lemma:cycle1} If $d_w\geq2$ for all vertices $w$ adjacent to $v$,
\[R(G+c) - R(G) \geq \frac{\sqrt{d_v+2}-\sqrt{d_v}}{\sqrt 2} + \frac{s_c-2}{2}\] 
with equality if all $d_w=2$.
\item \label{lemma:cycle2} If $d_v \geq 2$ and $d_w \leq d_v+2$ for all vertices $w$ adjacent to $v$, 
\[R(G+c) - R(G) \leq \frac{s_c-1}{2}\] 
with equality iff $d_v=2$ and all $d_w=4$.
\item \label{lemma:cycle3} If $d_v = 1$ and $2 \leq d_w \leq 3$ for the vertex $w$ adjacent to $v$, 
\[R(G+c) - R(G) < \frac{s_c-1}{2} + 0.075.\] 
\end{enumerate}
\end{lemma}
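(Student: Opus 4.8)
The plan is to first record the exact change $R(G+c)-R(G)$, then bound the one term that sees the old neighbors of $v$. Attaching a cycle $c$ of size $s_c$ at $v$ adds $s_c-1$ new vertices, all of degree $2$, and $s_c$ new edges, and raises $d_v$ to $d_v+2$; write $S=\sum_w 1/\sqrt{d_w}$ for the sum over the old neighbors $w$ of $v$, as in the proof of Lemma~\ref{lemma:leaf}. The edges whose weight changes or is newly created are: the old edges at $v$, which replace a factor $1/\sqrt{d_v}$ by $1/\sqrt{d_v+2}$ and so contribute $(1/\sqrt{d_v+2}-1/\sqrt{d_v})S$; the two new cycle edges at $v$, each joining $v$ to a degree-$2$ vertex and so of weight $1/\sqrt{2(d_v+2)}$; and the remaining $s_c-2$ new edges, each joining two degree-$2$ vertices and so of weight $1/2$. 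Hence
\[R(G+c)-R(G)=\left(\frac{1}{\sqrt{d_v+2}}-\frac{1}{\sqrt{d_v}}\right)S+\sqrt{\frac{2}{d_v+2}}+\frac{s_c-2}{2}.\]
Since the coefficient of $S$ is negative, an upper bound on $S$ yields a lower bound on the difference and a lower bound on $S$ yields an upper bound; bounding $S$ in the correct direction drives each of the three parts.

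For \ref{lemma:cycle1}, $d_w\ge2$ gives $S\le d_v/\sqrt2$, so the difference is at least $(d_v/\sqrt2)(1/\sqrt{d_v+2}-1/\sqrt{d_v})+\sqrt{2/(d_v+2)}+(s_c-2)/2$; clearing the common denominator $\sqrt{2(d_v+2)}$ and using $d_v/\sqrt2+\sqrt2=(d_v+2)/\sqrt2$ collapses the first two terms to $(\sqrt{d_v+2}-\sqrt{d_v})/\sqrt2$, with equality iff $S=d_v/\sqrt2$, i.e.\ every $d_w=2$. For \ref{lemma:cycle2}, $d_w\le d_v+2$ gives $S\ge d_v/\sqrt{d_v+2}$, so after the analogous simplification the difference is at most $f(d_v)+(s_c-2)/2$ with $f(d_v)=\frac{d_v}{d_v+2}+\frac{\sqrt2-\sqrt{d_v}}{\sqrt{d_v+2}}$, and it remains to prove $f(d_v)\le\frac12$ for $d_v\ge2$. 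Writing $x=2/(d_v+2)\in(0,\tfrac12]$ turns this into $g(x):=\tfrac12-x+\sqrt x-\sqrt{1-x}\le0$; since $g'(x)=-1+\tfrac1{2\sqrt x}+\tfrac1{2\sqrt{1-x}}\ge -1+\tfrac{\sqrt2}{2}+\tfrac12>0$ on $(0,\tfrac12]$ and $g(\tfrac12)=0$, we get $g\le0$, with equality iff $x=\tfrac12$ (so $d_v=2$) and $S=d_v/\sqrt{d_v+2}$ (so every $d_w=4$). For \ref{lemma:cycle3}, $d_v=1$ forces $S=1/\sqrt{d_w}$ for the unique neighbor $w$, so the difference equals $\tfrac1{\sqrt{d_w}}(\tfrac1{\sqrt3}-1)+\sqrt{\tfrac23}+\tfrac{s_c-2}{2}$; as $\tfrac1{\sqrt3}-1<0$ this increases with $d_w$, so on $\{2,3\}$ it is largest at $d_w=3$, where it equals $\tfrac{s_c-1}{2}+\bigl(\tfrac13-\tfrac1{\sqrt3}+\sqrt{\tfrac23}-\tfrac12\bigr)<\tfrac{s_c-1}{2}+0.075$, the parenthesized constant being less than $0.073$.

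Every computation here is elementary; the only things requiring care are the sign bookkeeping — making sure $S$ is bounded in the direction that pushes the difference the right way in each part — and the short monotonicity check for $g$ in part \ref{lemma:cycle2}, which I expect to be the main (if still minor) obstacle. For the equality claims one additionally uses that $S=d_v/\sqrt\delta$ forces all old neighbors of $v$ to share the common degree $\delta$.
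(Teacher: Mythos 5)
Your proof is correct, and for part~\ref{lemma:cycle1} it is essentially identical to the paper's: the same exact expression for $R(G+c)-R(G)$ in terms of $S=\sum_{w\in N(v)}1/\sqrt{d_w}$, the same bound $S\le d_v/\sqrt2$, and the same algebraic collapse to $(\sqrt{d_v+2}-\sqrt{d_v})/\sqrt2$. Where you go beyond the paper is in parts~\ref{lemma:cycle2} and~\ref{lemma:cycle3}: the paper's proof stops after part~\ref{lemma:cycle1} and never justifies the other two claims, so your arguments fill a real gap. I checked the details: the lower bound $S\ge d_v/\sqrt{d_v+2}$ is the right direction since the coefficient of $S$ is negative; the substitution $x=2/(d_v+2)$ correctly turns the target into $g(x)=\tfrac12-x+\sqrt x-\sqrt{1-x}\le0$ on $(0,\tfrac12]$, and your monotonicity bound $g'(x)\ge-1+\tfrac{\sqrt2}{2}+\tfrac12>0$ together with $g(\tfrac12)=0$ settles it, with the equality case $d_v=2$, all $d_w=4$ matching the statement; and in part~\ref{lemma:cycle3} the extremal constant $\tfrac13-\tfrac1{\sqrt3}+\sqrt{\tfrac23}-\tfrac12\approx0.0725$ is indeed below $0.075$, giving the strict inequality. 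The only nitpick is cosmetic: in part~\ref{lemma:cycle2} you could note that the equality analysis also requires $S$ to attain its lower bound simultaneously with $x=\tfrac12$, which you do state but which deserves the explicit remark that both conditions are independent and jointly achievable (e.g.\ $v$ of degree $2$ with both neighbors of degree $4$).
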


\begin{proof}
Define $S$ as in Lemma~\ref{lemma:leaf}. Observe
\[R(G+c) - R(G) = (s_c-2)\left(\frac{1}{2}\right)+2\left(\frac{1\phantom{.}}{\sqrt{2(d_v+2)}}\right)-S\left(\frac{1}{\sqrt{d_v}}-\frac{1}{\sqrt{d_v+2}}\right).\]
If $d_w\geq 2$, then $S \leq \frac{d_v}{\sqrt 2}$, and
\begin{multline*}R(G+c) - R(G) \geq \frac{s_c-2}{2}+\sqrt{\frac{2}{d_v+2}}-\frac{d_v}{\sqrt2} \left(\frac{1}{\sqrt{d_v}}-\frac{1}{\sqrt{d_v+2}}\right)\\
= \frac{\sqrt{d_v+2}-\sqrt{d_v}}{\sqrt2}+\frac{s_c-2}{2}\end{multline*}
with equality when all $d_w=2$. 
\end{proof}

\section{The radius and diameter of a cactus}\label{sec:rd_lemmata}

We will be studying radius and diameter under the addition of cycles or bridges, so we will need some a few facts for our later study. 

\begin{lemma}\label{lemma:centers}
All centers in a graph are contained in the same block.
\end{lemma}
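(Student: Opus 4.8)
The plan is to argue by contradiction using the standard fact that the set of centers of a connected graph is ``convex'' along shortest paths in a strong sense: if a vertex $z$ lies on \emph{every} shortest path between two centers, then $z$ is itself a center. First I would recall why an eccentricity-minimizing vertex behaves well under the tree-like coarse structure of a graph: if $u$ and $v$ are centers and $z$ is a cut vertex separating them (so $z$ lies on every $u$--$v$ path, in particular every shortest one), then for any vertex $x$ the path from $x$ to whichever of $u,v$ is on the far side of $z$ passes through $z$, which gives $\dist(x,z)\le\max(\dist(x,u),\dist(x,v))$; taking the max over $x$ yields $\mathrm{ecc}(z)\le\max(\mathrm{ecc}(u),\mathrm{ecc}(v))=r$, so $\mathrm{ecc}(z)=r$ and $z$ is a center.

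With that in hand, suppose for contradiction that two centers $u$ and $v$ lie in different blocks. Then the unique path between the corresponding block-vertices in the BC-tree has length at least $4$ and passes through at least one articulation-point vertex; concretely, there is a cut vertex $z$, distinct from $u$ and $v$, separating $u$ from $v$ in $G$. (Here I would cite the basic structure of the block cut tree described in the introduction: any two vertices in distinct blocks are separated by some articulation point.) By the previous paragraph $z$ is a center. Now I would iterate or, more cleanly, choose the blocks of $u$ and $v$ to be at maximal BC-tree distance among all pairs of center-blocks; the cut vertex $z$ produced above lies strictly between them in the BC-tree, so its block is strictly closer to one of the endpoints, contradicting maximality — unless all centers already lie in a single block.

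The one technical point to be careful about is the degenerate case where a ``center lies in a block'' is ambiguous because a center is itself an articulation point shared by several blocks; the cleanest fix is to phrase the statement and the induction in terms of the BC-tree, declaring that an articulation-point center is ``in'' each of its incident blocks, so that the claim becomes: there is a single block containing a representative of every center. The main obstacle is thus purely organizational — setting up the separating–cut-vertex argument so that it genuinely decreases a BC-tree distance parameter — rather than analytic; once the ``cut vertex between two centers is a center'' lemma is established, the rest is a short extremal/inductive wrap-up. I would also double-check the base observation that a center can fail to be unique only across at most two adjacent blocks (as in an even path or even cycle), which confirms the statement is sharp and that no stronger conclusion should be attempted.
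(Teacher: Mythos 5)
Your core move is the same as the paper's: take two centers in different blocks, produce an articulation point $z$ strictly between them, and compare eccentricities using the fact that every path from any vertex $x$ to the ``far'' center must pass through $z$. The paper stops there, because that observation already yields a \emph{strict} inequality: if the path from $x$ to $u$ passes through $z\neq u$, then $\dist(x,u)=\dist(x,z)+\dist(z,u)\geq \dist(x,z)+1$, so for every $x$ one has $\dist(x,z)<\max\bigl(\dist(x,u),\dist(x,v)\bigr)\leq r$, hence $\mathrm{ecc}(z)\leq r-1<r$, contradicting minimality of $r$ outright. You only record the non-strict version $\dist(x,z)\leq\max\bigl(\dist(x,u),\dist(x,v)\bigr)$, which merely shows $z$ is another center, and this is what forces you into the extremal/inductive wrap-up.

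That wrap-up is where the genuine gap sits: choosing $u$ and $v$ so that their blocks are at maximal BC-tree distance and then exhibiting a center $z$ whose blocks lie strictly between them does not contradict anything --- maximality of the distance between the blocks of $u$ and $v$ is perfectly compatible with other center-blocks sitting in the interior of that BC-tree path, so no parameter actually decreases and no contradiction is reached as written. The repair is simply to upgrade your inequality to the strict one (which your own computation already provides, since $\dist(z,u)\geq 1$); then $z$ has eccentricity strictly below the radius, the argument terminates in one step, and the entire extremal machinery, along with the worry about which block an articulation-point center ``belongs to,'' becomes unnecessary. Your closing remark that a non-unique center can only spread across two adjacent blocks is also not quite right as a sanity check on sharpness: an even cycle has many centers, all in one block, and the lemma's content is exactly that they can never spread across two blocks at all.
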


\begin{proof}
If two centers $a_1$ and $a_2$ are not in the same block, then find a shortest path between $a_1$ and $a_2$; it must pass through some articulation point $a$. Then removing this articulation point would separate the graph into components, one containing $a_1$, one containing $a_2$, and possibly some additional components. Partition the vertices $V-a$ along these lines: a set $V_1$ for the component of $a_1$, a set $V_2$ for the component of $a_2$, and a set $V_0$ (possibly empty) for the remaining vertices. The vertices in $V_1$ set are closer to $a$ than they are to $a_2$, those in $V_2$ likewise are closer to $a$ than they are to $a_1$, and the remaining in $V_0$ are closer to $a$ than to either $a_i$; therefore, $a$ has eccentricity lower than either $a_i$, which is a contradiction.
\end{proof}

Consider a graph and select a special \emph{central block}: if there are multiple centers, select the block containing them; if there is a a unique center which is not an articulation point, select its block; if there is a the unique center which is an articulation point, select one adjoining block which contains an edge of a shortest path from that center to some vertex of distance $r$. 

\begin{lemma}\label{lemma:rd_vs_n}
Let $G$ be a cactus with $n$ vertices, $k$ cycles, and $b$ bridges. Let $d$ be its diameter and $r$ be its radius. Then
\[d \leq \frac{n+k+b-1}{2}\]
with equality if the BC-tree is a path and $G$ is longitudinally symmetric, and
\[r = \frac{n-k+1}{2}\]
if $G$ is an even cycle. Else 
\[r \leq \frac{n-k}{2}\]
with equality if $G$ is an even cycle with a single pendant; if $G$ consists of an even cycle and a triangle which share a vertex; or if $G$ has BC-tree a path and no cycles larger than triangles. In fact, if the central block has $m \geq 2$ articulation points, then 
\[r \leq \frac{n-k-m+2}{2}\]
with equality when the BC-tree is starlike with root corresponding to the central node and at least $m-2$ of the adjacent blocks being pendants.
\end{lemma}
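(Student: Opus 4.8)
The plan is to reduce both the diameter bound and the radius bounds to counting on the block structure.

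\emph{Diameter.} Since $G$ is connected with cyclomatic number $k$ (each cycle of a cactus is an independent cycle in the cycle space), it has $n+k-1$ edges, while the $b$ bridges together with the cycles of sizes $s_c$ account for all edges, so $\sum_c s_c=n+k-b-1$. A path in $G$ visits each block at most once (leaving a block forces passing through an articulation point, which a simple path cannot revisit), hence projects to a path in the BC-tree; inside a bridge it uses $1$ edge and inside a cycle block of size $s$ it uses at most $\lfloor s/2\rfloor$ edges. Therefore $d\le b+\sum_c\lfloor s_c/2\rfloor\le b+\tfrac12\sum_c s_c=\tfrac{n+k+b-1}{2}$. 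Equality forces the diametral path to meet every block (so the BC-tree is a path), every cycle to be even, and the articulation points on each intermediate cycle to be antipodal; these are the conditions encoded by ``the BC-tree is a path and $G$ is longitudinally symmetric,'' and one checks directly that the extremal examples (e.g.\ the even cycle with a pendant, or $G_6$) satisfy them.

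\emph{Radius: setup.} If $G$ is an even cycle then $r=n/2$ and $k=1$, giving the stated $r=\frac{n-k+1}{2}$; assume henceforth $G$ is not an even cycle. Choose a center $a$; by Lemma~\ref{lemma:centers} and the definition of the central block, $a$ lies in the central block $B_0$. Form a shortest-path tree $T$ of $G$ rooted at $a$ (a BFS tree). Then $\text{ecc}_T(a)=\text{ecc}_G(a)=r$, and since $T$ is a spanning subgraph of $G$ we get $r=\text{rad}(G)\le\text{rad}(T)\le\text{ecc}_T(a)=r$, so equality holds throughout and $a$ is also a center of the tree $T$. Let $P^\ast$ be a longest path of $T$; being a diametral path of a tree, $P^\ast$ passes through the center $a$, and since a tree has $d\ge 2r-1$ it has at least $2r$ vertices.

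\emph{Radius: the vertex count.} Each cycle $c$ of $G$ is a single block, so it appears in $T$ as a path on all $s_c$ of its vertices. If all of these lie on $P^\ast$, then the vertex of $c$ nearest $a$ must be $a$ itself: otherwise the $P^\ast$-segment carrying $c$ lies weakly on one side of $a$, and the cycle edge of $c$ omitted in forming $T$ would give a strictly shorter $a$-path to the far end of that segment, contradicting that $T$ is a BFS tree. At most one cycle can hide inside $P^\ast$ in this way, since two such cycles would both contain a $P^\ast$-edge incident to $a$, impossible because distinct cycles are distinct blocks. Hence at least $k-1$ cycles each have a vertex off $P^\ast$; a short accounting (cycles of a cactus meet in at most one vertex) shows these yield $\ge k-1$ distinct off-path vertices, so $n\ge|P^\ast|+(k-1)\ge 2r+k-1$. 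When $d_T$ is even this already gives $n\ge 2r+k$; when $d_T=2r-1$, a hiding cycle forces $P^\ast$ to be essentially a full even cycle, and the assumption that $G$ is not literally an even cycle supplies the one further vertex. Either way $r\le\frac{n-k}{2}$, and the graphs listed (even cycle plus a pendant, even cycle glued to a triangle, path-BC-tree of triangles and bridges) are the configurations in which this count is tight.

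\emph{Radius: the $m\ge 2$ refinement, and the main obstacle.} Off each of the $m$ articulation points of $B_0$ hangs a non-empty subtree (it contains a block besides $B_0$, hence at least one vertex past the articulation point); the two ends of $P^\ast$ lie in at most two of these subtrees, so the remaining $\ge m-2$ subtrees contribute $\ge m-2$ additional off-$P^\ast$ vertices. The delicate point is avoiding double counting against the $k-1$ cycle vertices above: a cycle lying inside one of these ``unused'' subtrees already supplies at least two off-path vertices, which pays for both its charges, whereas a cycle meeting $B_0$ or a ``used'' branch contributes off-path vertices disjoint from the unused subtrees. Bookkeeping these cases gives $n\ge 2r+k+(m-2)$, i.e.\ $r\le\frac{n-k-m+2}{2}$, with equality exactly when each unused subtree is a single pendant and the rest is as above — the starlike-BC-tree condition in the statement. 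I expect the real work to be exactly this vertex count: making ``distinct'' and ``no double counting'' rigorous, and pinning the $d_T=2r-1$ degenerate case precisely to even cycles. The diameter half, by contrast, is routine edge-counting.
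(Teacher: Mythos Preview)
Your diameter argument is essentially the paper's: both count edges along a longest path block by block and use $\sum_c s_c + b = n+k-1$.

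For the radius you take a genuinely different route. The paper never passes to a spanning tree; it picks two vertices $u,v$ at distances $r$ and $\geq r-1$ from a center $a$, takes a shortest $u$--$v$ path $P$ in $G$ itself, and splits on whether $a\in P$. When $a\in P$ the path already has $\geq 2r$ vertices, misses one vertex from every cycle (a cycle fully on a shortest path would admit a chord shortcut), and misses $m-2$ further vertices coming from the unused branches at the central block; when $a\notin P$ a more elaborate four-path union gives $n\geq 2r+k+m-1$. The delicate ``not an even cycle'' boundary and the $m<2$ cases are handled separately by a short block-addition induction starting from a single cycle or edge. Your BFS-tree approach is conceptually cleaner --- one tree $T$ and its diametral path $P^\ast$ do all the work --- and the distinctness of the off-$P^\ast$ cycle witnesses is easy in your setup if you count \emph{edges} rather than vertices: each non-hiding cycle contributes an off-$P^\ast$ tree edge, these lie in distinct blocks and hence are distinct, and in a rooted tree the child endpoint of an off-path edge is an off-path vertex, injectively.

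The genuine gap is exactly where you flag it, but it is sharper than you state. Your claim that ``a hiding cycle forces $P^\ast$ to be essentially a full even cycle'' is false: the hiding cycle $c_0$ need only contain $a$ and the two $P^\ast$-edges at $a$; $P^\ast$ can extend well beyond $c_0$ through bridges and through on-arms of other cycles. So in the $d_T=2r-1$ case with one hiding cycle your count yields only $n\geq 2r+k-1$, and you still owe a proof that equality forces $G$ to be a single even cycle. One repair inside your framework: $d_T=2r-1$ produces a second $T$-center $a'$ adjacent to $a$, and since $\text{ecc}_G(a')\leq\text{ecc}_T(a')=r$ this $a'$ is also a $G$-center; hence the edge $aa'$ lies in the central block, the hiding cycle \emph{is} the central block, and one can then check that any further block contributes an as-yet-uncounted off-$P^\ast$ vertex. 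The paper sidesteps this corner entirely by treating $m<2$ via block-addition induction rather than by a path count --- less uniform than your approach, but it makes the equality characterisations drop out without a separate boundary analysis.
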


Note that this gives some insight into the traditional bound of 
\[\frac{d}{2} \leq r \leq d.\]
with the first an equality for an odd path and the second for an even cycle. On the odd path, the bounds of the lemma become $d \leq n-1$ and $r < \frac{n+1}{2}$, that is, $r \leq \frac{n-1}{2}$, and both are sharp since $d = 2r = n-1$. For the even cycle, where $k=1$, $b=0$, the lemma gives $d \leq \frac{n}{2}$ and $r \leq \frac{n}{2}$, again, both of which are sharp since $d = r = \frac{n}{2}$. 

\begin{proof}
Consider two vertices $u$ and $v$ of maximum distance in the graph. Then the path between them passes through at most half of the vertices in each cycle as well as possibly all the bridges, so
\[d \leq b + \sum_c \frac{s_c}{2}.\]
Note that the total number of edges in the graph may be calculated as either $\sum s_c + b$ or as $n+k-1$, which gives the desired bound on $d$.

Similarly, as we have already commented, even cycles satisfy $n= 2r + k - 1$, odd cycles and even paths $n = 2r + k$, and odd paths $n = 2r + k + 1$. Note that adding a block to $B$ increases $n-k$ by 1 if it is a pendant or triangle and more than 1 if a larger cycle. Therefore, any other graph where $r(G) = r(B)$ for some block $B$ also satisfies the lemma inductively (note $n = 2r + k$ is only obtained if $B$ is an even cycle and $G$ is $B$ with a pendant or a triangle added), and any graph where $r(B) = r(G) - 1$ does as well (note $n = 2r + k$ is unobtainable since $G$ must have at least 1 more block than $B$ which is a cycle on at least 4 vertices, or else at least 2 more blocks than $B$). This has accounted for all cases with $m < 2$. 

For a generic cactus, consider a center $a$. Then there is at least one vertex $u$ of distance $r$ from $a$ and at least one other vertex $v$ of distance $r$ or $r-1$. Let $P$ be a shortest path from $u$ to $v$. If $P$ contains $a$, then 
\[\dist(u,v) = \dist(u,a)+\dist(a,v) \geq 2r-1,\]
 with equality if $P$ is an even path. Therefore, $P$ contains at least $2r$ vertices, and it misses at least one vertex from every cycle (else there are shorter paths between $u$ and $v$). In fact, it misses at least an additional $m-2$ vertices: by construction, $P$ passes through the central block. If that block has $m$ articulation points, then the vertex set may be divided into subsets, those from the central block and then $m$ more sets depending on which articulation point separates them from the central block. The path $P$ goes through the central block and at most two of these other subsets, so it misses at least one additional vertex from each of the $m-2$ other subsets. Thus, $n \geq 2r + k + m - 2$, with equality when $P$ is an even path, the cycles are 3-cycles and have one edge in $P$, and when any the articulation points on the central cycle (other than the one(s) on $P$) separates the cycle from a single pendant.

If $a$ is not on $P$, then $P$ must still share at least one edge with some block containing $a$. If not, then $a$ would not actually have the lowest eccentricity; let $b$ be a vertex one step closer to $u$, and let $B$ be the block containing $a$ and $b$. All the other vertices which are within $r-2$ of $a$ are still within $r-1$ of $b$. Since any vertex of distance at least $r-1$ from $a$ has the property that its path to $u$ does not share an edge with $B$, then this means that, if we remove from the graph all the edges of $B$, then these vertices will still be in the same connected component as $u$. There is some unique articulation point $u'$ where this component intersects $B$; since $b$ is closer to $u$ than $a$ is, it must also be closer to $u'$, and any vertex in this component is therefore within distance $r-1$ of $b$. Therefore, $b$ has lower eccentricity than $a$.

Assume $a$ is in a block $B$ that shares an edge with $P$ but that $a$ is not on $P$. Then $B$ must be a cycle rather than a bridge, and $P$ must enter the block at some articulation point $u'$ and exit it at some articulation point $v'$ (note $u$ and $v$ cannot be on $B$ themselves since $r(B) \leq r(G) - 2$; in fact, each of them is distance at least 2 from $B$, so $\dist(u,v) \geq 5$). There are two paths between $u'$ and $v'$ inside $B$, call them $P_a$ (containing $a$) and $P_{-a}$. Then $P_{-a}$ must be shorter (else $P$ could contain $a$). Consider a vertex $b$ which is on the path $P_{-a}$ (and therefore also $P$). Without loss of generality, assume $b$ is not a center, so there must be some other vertex $w$ whose distance to $b$ is at least $r+1$, with corresponding $w'$ (the vertex where a path from $w$ to $b$ first enters the cycle $B$; once more, it is an articulation point since $w$ must be at least distance 3 from $B$). Now assume $w'$ is neither $u'$ nor $v'$ (we may choose $b$ so that $\dist(b,u') \leq \dist(a,u')$ and $\dist(b,v') \leq \dist(a,v')$, so $\dist(b,u) \leq r$ and $\dist(b,v) \leq r$; therefore, if $w'$ must be $u'$ or $v'$, then that means all vertices of distance at least $r+1$ from $b$ are also distance at most $r$ from $b$, which is a contradiction). Now let us add up the number of edges in a set of paths: a shortest path from $u$ to $v$; that path, but altered to go the other way around $B$ to contain $a$; a shortest path from $b$ to $w$; and that path, but again altered to go the other way around $B$. These paths respectively have lengths at least $\dist(u,v)$; $2r-1$; $r+1$; and $r+1$. That is, the total is at least $\dist(u,v) + 4r + 1 \geq 4r+6$. When counting these edges, we have double-counted each edge, so these paths therefore actually contain at least $2r+3$ distinct edges. Since they constitute a subgraph with one cycle, they thus contain at least $2r+3$ distinct vertices. This subgraph must miss at least one vertex from each of the $k-1$ cycles other than $B$, and it must miss at least one additional vertex for each of the other $m-3$ articulation points on $B$ (note $B$ meets the definition of a central block), so $n \geq 2r + k + m - 1$.
\end{proof}

Finally, we define two special subgraphs which realize the radius and diameter of the overall graph. First, consider two vertices $u$ and $v$ of maximal distance and a shortest path $P$ between them. Let $H_d$ be the subgraph consisting of all the blocks which contain an edge of $P$. Next, if a center $a$ is contained on the path $P$, then $H_r=H_d$ will also realize the radius. If not, then find the central block $B$. For each articulation point $v'$ in this block, consider the set of all vertices it separates from $B$ and identify one of them of maximal distance from $B$, call it $v$. Let $T$ be a minimal tree containing all these vertices $v$, and let $H_r$ be the subgraph consisting of all blocks which contain an edge of $T$. See Figures~\ref{fig:ntc_d} and \ref{fig:cactus_d} for examples of $H_d$ and Figures~\ref{fig:ntc_r} and \ref{fig:cactus_r} for examples of $H_r$.

\begin{lemma}\label{lemma:realize_rd}
Let $G$ be a cactus with diameter $d$ and radius $r$. Let $H_d$ and $H_r$ be the subgraphs defined above. Then $H_d$ has the same diameter as $G$, and $H_r$ has the same centers, central block, and radius as $G$.
\end{lemma}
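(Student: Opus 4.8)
The plan rests on one structural fact about cacti which I would isolate first. Call a subgraph $H\subseteq G$ of a cactus \emph{block-closed} if it is a union of blocks of $G$ and is connected, i.e.\ its blocks form a subtree of the BC-tree. I claim every block-closed $H$ is an isometric subgraph: $\operatorname{dist}_H(x,y)=\operatorname{dist}_G(x,y)$ for all $x,y\in H$. Indeed, in a cactus a shortest $x$--$y$ path is forced block by block: it must pass through each articulation point separating the block of $x$ from the block of $y$ along the unique BC-tree path between them, and inside each block it takes the single bridge-edge or the shorter of the two cycle-arcs; every block it uses lies on that BC-tree path, hence inside $H$. Now $H_d$ (the blocks carrying an edge of the geodesic $P$, which trace a path in the BC-tree) and $H_r$ (the blocks carrying an edge of $T$, which together with $B$ trace a subtree of the BC-tree) are both block-closed, so both are isometric in $G$.

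For $H_d$ this finishes the diameter claim at once: isometry gives $\operatorname{dist}_{H_d}(x,y)=\operatorname{dist}_G(x,y)\le d$ for all $x,y\in H_d$, while $u,v\in H_d$ realize $d$, so $H_d$ has diameter exactly $d$. For $H_r$, first recall that by Lemma~\ref{lemma:centers} all centers of $G$ lie in the single block $B\subseteq H_r$. The construction of $T$ keeps, from each articulation point $v'$ of $B$, a geodesic to a vertex of maximal distance from $B$ in the branch $v'$ cuts off; hence the depth $\operatorname{depth}(v')$ (the largest distance from $v'$ to a vertex it separates from $B$) is unchanged in $H_r$. Since $B$ is also retained whole and is isometric, the eccentricity of any $w\in B$---which equals the maximum of its distances to the other vertices of $B$ and of $\operatorname{dist}(w,v')+\operatorname{depth}(v')$ over the articulation points $v'$ of $B$---is the same in $H_r$ as in $G$. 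So $H_r$ and $G$ have the same eccentricity function on $B$, hence the same minimum value $r$ there and the same minimizers.

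It remains to rule out a vertex of $H_r$ outside $B$ having $H_r$-eccentricity $\le r$, and to check the central block is preserved. A vertex $w\in H_r\setminus B$ lies behind a unique articulation point $v'$ of $B$, at distance $\delta\ge 1$. Writing $\operatorname{ecc}'_G(v')$ for the eccentricity of $v'$ restricted to vertices \emph{not} behind $v'$, I would show $\operatorname{ecc}'_G(v')\ge r$: if $v'$ is not a center then $\operatorname{depth}(v')<r$ (else a deepest vertex behind $v'$ would be at distance $>r$ from a center of $B$), so $\operatorname{ecc}'_G(v')=\operatorname{ecc}_G(v')\ge r$; and if $v'$ is a center, then $\operatorname{ecc}'_G(v')<r$ is impossible---either a second center in $B$ would be too far from a deepest vertex behind $v'$, contradicting Lemma~\ref{lemma:centers}, or, when $v'$ is the unique center, the multi-clause definition of the central block (which places an edge of a geodesic from $v'$ to a distance-$r$ vertex inside $B$) forces that distance-$r$ witness to sit not behind $v'$. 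Given $\operatorname{ecc}'_G(v')\ge r$, its witness lies in $B$ or is the retained deepest vertex of another branch, so survives in $H_r$; chaining through $v'$ gives $\operatorname{ecc}_{H_r}(w)\ge\delta+r>r$, so $w$ is not a center of $H_r$. Thus $H_r$ has radius $r$ and exactly the centers of $G$; and since the geodesic defining the central block survives intact, $B$ remains a legitimate central block. The sub-case where a center already lies on $P$, so $H_r=H_d$, is the situation in which $B$ is one of the chain blocks of $H_d$ and, because $P$ realizes the diameter, every branch hanging off $P$ is shallow enough that $u$ and $v$ already witness the eccentricities of the vertices of $B$; I would close it with the same depth bookkeeping.

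The main obstacle is exactly the $w\notin B$ step, and within it the case where $v'$ is itself a center: this is where the three-way definition of ``central block'' must be unpacked carefully, and where the cactus-specific wrinkles (a center or articulation point sitting inside a cycle block rather than being a clean cut vertex, so that ``behind $v'$'' must be read via the components of $G-v'$ not meeting the rest of $B$) demand attention. Everything else is the isometry lemma plus routine bookkeeping.
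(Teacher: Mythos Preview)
Your proposal is correct and rests on the same two ingredients as the paper's proof: that a connected union of blocks of a cactus is isometric in $G$, and that the ``depth'' past each articulation point of the central block $B$ is preserved in $H_r$. The paper states the isometry only implicitly (``any shortest path between them in $G$ goes through the same blocks as $P$'') and, for $H_r$, argues by successively deleting leaf blocks from $G$ one at a time, checking after each deletion that the eccentricity of every remaining vertex---both those in $B$ and those outside $B$---is unchanged. Your version instead packages the isometry as a standalone lemma and computes eccentricities directly from the formula $\operatorname{ecc}(w)=\max_{v'}\bigl(\operatorname{dist}(w,v')+\operatorname{depth}(v')\bigr)$ for $w\in B$, then handles vertices outside $B$ via the clean inequality $\operatorname{ecc}'_G(v')\ge r$.

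The two routes are essentially equivalent; yours is a tidier reorganization rather than a genuinely different argument. Your explicit isometry lemma and the case split on whether $v'$ is itself a center make the treatment of vertices outside $B$ more transparent than the paper's somewhat compressed parenthetical argument, and your identification of the unique-center case as the crux (where the three-clause definition of ``central block'' must be invoked) is exactly right---the paper's proof leans on the same choice but less visibly. The $H_r=H_d$ sub-case is brief in both treatments; your remark that it closes with the same depth bookkeeping is in line with the paper's one-sentence dismissal of it.
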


\begin{proof}
The diameter of $H_d$ is same as that of $d(G)$ by construction; there are two vertices of distance $d=d(G)$, and no vertices are more distant in $H_d$ than in $G$ because any shortest path between them in $G$ goes through the same blocks as $P$, and, therefore, is contained wholly in $H_d$. Similarly, if some center $a$ is contained in the path $P$, then no other vertex will lower eccentricity within $H_d$, so the same subgraph realizes the radius as well. 

On the other hand, if $P$ does not contain any centers, and $H_r$ is defined from the minimal tree $T$, then consider constructing $H_r$ by successively removing cycles (with single articulation points) or blocks from $G$. Removing one such will not alter the eccentricity of any center or any other vertex contained in $B$; any path from such a vertex to any other vertex outside $B$ must pass through one of $B$'s articulation points, and we have preserved a vertex at maximal distance past each of these articulation points. Additionally, it cannot alter the eccentricity of any other vertex $v$: its distance from the vertices in $B$ or on the other side of $B$ has not been reduced, and one of these vertices must have previously been at distance greater than $r$. (If not, then consider the articulation point $v'$ on $B$ which separates $v$ from $B$, and consider the vertices $V_B$, those in the component of $G-v$ which contains $B$, and $V_{-B}$, the rest. If $v$ is has eccentricity less than $r$, then it is distance at most $r-1$ from any vertex in $V_B$, which means $v'$ is distance at most $r-2$ from these vertices. If $v'$ is a unique center, then we had constructed $B$ so that one of these vertices was in fact distance $r$ from $v'$; on the other hand, if $v'$ was not a unique center, then there was at least one other center in $B$ of distance at most $r$ from any vertex in $V_{B}$, so $v'$ is also distance at most $r-1$ from these vertices. Therefore, the eccentricity of $v'$ in $G$ was, in fact, less than $r$).
\end{proof}

\section{Randi\'c index vs. graph order/size/valency}\label{sec:osv}

\subsection{Trees} 

First, we may use Lemma~\ref{lemma:leaf} on its own to approximate $R$ for a tree:

\begin{thm}\label{thm:tree_osv}
Let $G$ be a tree. Then
\[R \geq 1+\sum_{v} \left(\sqrt{d_v}-1\right),\]
with equality if $G$ is the star. 
\end{thm}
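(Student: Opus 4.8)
The plan is a straightforward induction on the order $n$ of the tree, building $G$ up from $P_2$ by repeated pendant additions, with Lemma~\ref{lemma:leaf}(1) supplying exactly the increment we need.

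For the base case $n=2$ we have $G=P_2$, where $R=1$ and the right-hand side is $1+(\sqrt 1-1)+(\sqrt 1-1)=1$, so equality holds. For the inductive step, let $G$ be a tree on $n\geq 3$ vertices; pick any leaf $u$ of $G$, let $v$ be its neighbour, and let $G'=G-u$, a tree on $n-1$ vertices with $d_v(G')=d_v(G)-1$ and $d_w(G')=d_w(G)$ for every other vertex $w\neq u$. Writing $f(H)=1+\sum_{w}\left(\sqrt{d_w(H)}-1\right)$ for the claimed lower bound, the vertex $u$ contributes $\sqrt 1-1=0$ to $f(G)$, and the only other term that changes is the one for $v$, so
\[f(G)-f(G') = \sqrt{d_v(G)}-\sqrt{d_v(G)-1} = \sqrt{d_v(G')+1}-\sqrt{d_v(G')}.\]
On the other hand, $G=G'+uv$ is obtained from $G'$ by adding a pendant at $v$, so Lemma~\ref{lemma:leaf}(1) gives $R(G)-R(G')\geq \sqrt{d_v(G')+1}-\sqrt{d_v(G')}$. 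Combining this with the inductive hypothesis $R(G')\geq f(G')$ yields $R(G)\geq f(G)$.

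For the equality claim, if $G=S_{n-1}$ is the star on $n$ vertices then each of the $n-1$ edges has weight $1/\sqrt{n-1}$, so $R=\sqrt{n-1}$, while $f(G)=1+\left(\sqrt{n-1}-1\right)+(n-1)\left(\sqrt 1-1\right)=\sqrt{n-1}$, and equality holds. (Tracing the induction, equality in $R(G)\geq f(G)$ forces equality in every application of Lemma~\ref{lemma:leaf}(1), i.e.\ each pendant is added at a vertex all of whose neighbours are already leaves; this is possible only if every pendant after the first is attached at the same vertex, so the star is in fact the unique extremal tree.)

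There is no real obstacle here: the entire content is in Lemma~\ref{lemma:leaf}(1), and the only thing to notice is that the lower bound it provides for the change in $R$ upon adding a pendant is exactly the change in the target quantity $1+\sum_v\left(\sqrt{d_v}-1\right)$. One should merely double-check the degree bookkeeping — that the new leaf contributes nothing and that only the attachment vertex's term moves — and that every tree on $n\geq 2$ vertices is reached from $P_2$ by such pendant additions, which is immediate by peeling off leaves one at a time.
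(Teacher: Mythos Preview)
Your proof is correct and follows essentially the same approach as the paper: induct on the order of the tree, using Lemma~\ref{lemma:leaf}(1) to show that the increment in $R$ upon adding a pendant matches the increment in the target quantity. The only cosmetic difference is the base case --- the paper starts from an isolated vertex (where $R=0$ and the right-hand side is $1+(\sqrt 0-1)=0$) rather than $P_2$ --- and the equality analysis is handled the same way in both.
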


Note that the first half of Corollary~\ref{cor:valency} follows immediately,
\[R \geq 1 - n + \sum_{v} \sqrt{d_v}.\]

\begin{proof}
Observe the inequality holds for an isolated vertex, where $R=0$. Adding a new pendant $u$ at some vertex $v$ increases its degree from $d_v$ to $d_v+1$ and changes $R$ by:
\[R(G + u) - R(G) \geq \sqrt{d_v+1}-\sqrt{d_v},\] 
as in Lemma~\ref{lemma:leaf}. This is an equality when $d_w = 1$ for all vertices $w$ adjacent to $v$, so the original formula is an equality only for the star, which we independently calculated in Example~\ref{ex:star}.
\end{proof}

This gives us the tools to bound $R$; for example, it immediately produces a theorem of Bollob\'as and Erd\"os, who proved the same result inductively adding pendants to the path on 3 vertices. 

\begin{cor}\cite[Theorem~3]{bollobas1998extremal}\label{cor:tree_extrema} Let $G$ be a tree on $n$ vertices. If $n=2$ and the graph is a single edge, then $R=1$; else,
\[R \geq \sqrt {n-1}\]
with the lower bound realized only by the star.
\end{cor}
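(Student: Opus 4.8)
The plan is to derive this from Theorem~\ref{thm:tree_osv} (equivalently, the first half of Corollary~\ref{cor:valency}), which gives for any tree $G$ on $n$ vertices
\[ R(G) \;\ge\; 1 - n + \sum_v \sqrt{d_v}. \]
So for $n \ge 3$ it suffices to prove the purely combinatorial inequality
\[ \sum_v \sqrt{d_v} \;\ge\; (n-1) + \sqrt{n-1}, \]
and to check that equality forces $G$ to be the star. The case $n = 2$ is just the recorded value $R(P_2) = 1 = \sqrt{2-1}$.

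For the combinatorial step I would use that a tree on $n$ vertices has $n-1$ edges, so its degree sequence satisfies $d_v \ge 1$ for all $v$ and $\sum_v d_v = 2(n-1)$, and then minimize $\sum_v \sqrt{d_v}$ over \emph{all} integer sequences obeying those two constraints — a larger set than the set of tree degree sequences, but one that still contains the star's sequence $(n-1,1,\dots,1)$. Since $t \mapsto \sqrt t$ is strictly concave, the sum is minimized at the most ``spread out'' admissible sequence; concretely I would run a degree-transfer argument. Fix a vertex $u$ of maximum degree (its degree is $\ge 2$ since $\sum_v d_v = 2n-2 > n$ when $n \ge 3$), and while some other vertex $w$ has $d_w \ge 2$, replace the pair $(d_u, d_w)$ by $(d_u + 1, d_w - 1)$. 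Because $x \mapsto \sqrt{x+1} - \sqrt{x} = (\sqrt{x+1} + \sqrt{x})^{-1}$ is strictly decreasing and $d_u \ge d_w$, each such move strictly decreases $\sum_v \sqrt{d_v}$; and since each move strictly increases $\sum_v d_v^2$, which is bounded above, the process terminates, necessarily at the star sequence $(n-1,1,\dots,1)$, whose $\sqrt{\,}$-sum is exactly $(n-1) + \sqrt{n-1}$. Equivalently, one can cite Schur-concavity of $\sum_v \sqrt{d_v}$ together with the fact that $(n-1,1,\dots,1)$ is the majorization-maximum of the feasible region, or simply that a concave function on a polytope attains its minimum at an extreme point.

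For the equality statement: if $R(G) = \sqrt{n-1}$ then both displayed inequalities are equalities; the transfer argument shows the second is an equality only when the degree sequence already equals $(n-1,1,\dots,1)$, and the unique tree with that degree sequence is $S_{n-1}$. Conversely $R(S_{n-1}) = \sqrt{n-1}$ by Example~\ref{ex:star}, so the star does realize the bound. There is no deep obstacle here: the only points requiring care are phrasing the minimization over degree sequences (rather than over graphs) so that it legitimately bounds $R$ from below, and confirming that the transfer strictly decreases $\sum_v \sqrt{d_v}$ in every non-star configuration so that the equality case is pinned down exactly — that strictness-and-termination bookkeeping is the fiddliest part of the argument.
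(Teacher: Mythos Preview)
Your proposal is correct and follows essentially the same route as the paper: invoke Theorem~\ref{thm:tree_osv} to reduce to minimizing $\sum_v\sqrt{d_v}$ under the constraint $\sum_v d_v = 2n-2$, and observe that the minimum is $(n-1)+\sqrt{n-1}$, attained by the star sequence. The paper simply asserts that minimization without justification, whereas you supply the transfer/Schur-concavity argument; and for the equality case the paper appeals to the equality clause of Theorem~\ref{thm:tree_osv} (``by the lemma, only the star'') while you instead pin down the degree sequence directly---both are fine, and yours is arguably more self-contained.
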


\begin{proof}
Note $\sum d_v$ is fixed (it is twice the number of edges or $2n-2$), and $\sum \sqrt{d_v}$ is minimized when a single vertex is degree $n-1$ and the others are degree $1$, in which case $R = \sqrt{n-1}$. This lower bound is realized by the star (and, by the lemma, only the star).

\end{proof}

\subsection{Nontrivial cacti}

Next, we consider the case of a nontrivial cactus. As with the trees, we will begin with a single cycle $C_n$ and add cycles one by one by identifying one vertex in the new cycle with one vertex in the old graph. This theorem and its corollaries will be essential in our study of the radius and diameter of a nontrivial cactus, but the first theorem is true for cacti with bridges as well, so we state it as such.

\begin{thm}\label{thm:cactus_osv}
Let $G$ be a cactus with $n$ vertices, $k>0$ cycles, and $b$ bridges. Then
\[R \geq 1+ \sum_{c} \left(\frac{s_c}{2} - 1\right) + \sum_br \left(\sqrt 2 - 1\right) + \sum_{v} \left(\sqrt{\frac{d_v}{2}}-1\right),\]
where the sums run over cycles, bridges, and vertices, respectively. Equality occurs if $G$ is a nontrivial cactus with none of the articulation points adjacent or $G$ is the path on 3 vertices.
\end{thm}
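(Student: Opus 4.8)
The plan is to induct on the number of blocks of $G$, exploiting the description of a cactus as built from a single cycle by repeatedly gluing on a new cycle (at one vertex) or a pendant edge. The base case is a single cycle $C_n$: by Example~\ref{ex:cycle}, $R(C_n)=n/2$, while the right-hand side is $1+(n/2-1)+0+\sum_v 0=n/2$, so equality holds (and $C_n$ is a nontrivial cactus with no adjacent articulation points, consistent with the stated equality case). For the inductive step, take a pendant block $B$ of $G$ meeting the rest of $G$ at a single articulation point $v$, and let $G'$ be $G$ with $B$ and its non-$v$ vertices deleted; write $d_v$ for the degree of $v$ in $G'$. Since $k>0$, I would always choose $B$ so that $G'$ still has at least one cycle, so the induction never bottoms out at a tree and I never have to reconcile the $\sqrt{d_v}$-formula of Theorem~\ref{thm:tree_osv} with the $\sqrt{d_v/2}$-formula here; this is possible because if $G$ has $\geq 2$ blocks but only one cycle it must contain a pendant edge.

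In each case one computes the change in the right-hand side explicitly and matches it against the change in $R$. If $B$ is a cycle of size $s_c$, then $k$ drops by one, $s_c-1$ new degree-$2$ vertices disappear (each contributing $0$), and the $v$-term changes by $\sqrt{(d_v+2)/2}-\sqrt{d_v/2}$, so the right-hand side changes by exactly $\tfrac{s_c-2}{2}+\tfrac{\sqrt{d_v+2}-\sqrt{d_v}}{\sqrt2}$, which is precisely the lower bound on $R(G)-R(G')$ from the first part of Lemma~\ref{lemma:cycle}; hence the inequality is inherited, with equality maintained iff all neighbours of $v$ in $G'$ have degree $2$. If $B$ is a pendant edge $uv$, the right-hand side changes by $(\sqrt2-1)+(\tfrac{1}{\sqrt2}-1)+\tfrac{\sqrt{d_v+1}-\sqrt{d_v}}{\sqrt2}$, and one controls $R(G)-R(G')$ from below using Lemma~\ref{lemma:leaf} (its second part when every neighbour of $v$ in $G'$ has degree $\geq 2$). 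Chaining equalities back to the base case forces every block to be a cycle and forces, at each cycle-attachment, all neighbours of the attaching vertex to have degree $2$ — that is, no two articulation points of $G$ are adjacent, which is the claimed equality case.

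The main obstacle is the pendant-edge step. Lemma~\ref{lemma:leaf} only gives a lower bound for $R(G)-R(G')$, and this bound can be smaller than the change $(\sqrt2-1)+(\tfrac1{\sqrt2}-1)+\tfrac{\sqrt{d_v+1}-\sqrt{d_v}}{\sqrt2}$ in the right-hand side once $d_v$ is moderately large, and the second part of the lemma is not even available when $v$ already carries another pendant edge (so that a neighbour of $v$ has degree $1$). So a term-by-term ``every step preserves the inequality'' argument does not work for bridges; one must instead either remove pendant blocks in a carefully chosen order — stripping a pendant edge off a vertex with no other pendant neighbours first and deferring pendant cycles — so that deficits never overrun the surplus, or account globally, showing that the surplus generated when the first pendant (or first additional cycle) is glued at a low-degree vertex pays for the deficits at later attachments at that same vertex. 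Carrying out this amortized bookkeeping while simultaneously keeping the hypotheses of Lemmas~\ref{lemma:leaf} and~\ref{lemma:cycle} satisfied (all neighbours of the attaching vertex of degree $\geq 2$) and tracking the equality conditions is where the real work lies.
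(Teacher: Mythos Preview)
Your plan is essentially the paper's own argument: induct on blocks from a single cycle, use Lemma~\ref{lemma:cycle}(1) for each added cycle and Lemma~\ref{lemma:leaf} for each added pendant edge, comparing the change in $R$ to the change in the right-hand side. The one concrete device in the paper that you only gesture at is the ordering: at every articulation point, attach \emph{all} cycles there before attaching \emph{any} pendant edges. This is what guarantees the ``all neighbours of $v$ have degree $\ge 2$'' hypothesis of Lemma~\ref{lemma:cycle}(1) whenever a cycle is glued on, so that step goes through cleanly with the exact increment $\tfrac{s_c-2}{2}+\tfrac{\sqrt{d_v+2}-\sqrt{d_v}}{\sqrt2}$.

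For the pendant step the paper does not do any amortized bookkeeping of the kind you describe; it simply invokes Lemma~\ref{lemma:leaf}(1) and writes the chain
\[
R(G+uv)-R(G)\ \ge\ \sqrt{d_v+1}-\sqrt{d_v}\ \ge\ \sqrt{\tfrac{d_v+1}{2}}-\sqrt{\tfrac{d_v}{2}}+\sqrt{\tfrac12}-1+\sqrt2-1,
\]
noting equality at $d_v=1$. Your worry that this pointwise comparison can fail once $d_v$ is larger is well-founded (check $d_v=2$), so the paper is terse at exactly the place you flag as the ``main obstacle''; but the overall strategy, the choice of lemmas, and the equality analysis you give all match the paper's proof.
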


We may equivalently say
\[\begin{split}
R &\geq \frac{1+n-k}{2} + b\left(\sqrt 2 - \frac{3}{2}\right) + \sum_{v} \left(\sqrt{\frac{d_v}{2}}-1\right) \\
&=  \frac{1-n-k }{2} + b\left(\sqrt 2 - \frac{3}{2}\right) + \sum_{v} \sqrt{\frac{d_v}{2}},
\end{split}\]
since the number of edges may be calculated as either $\sum s_c + e$ or $n+k-1$; see Corollary~\ref{cor:valency}. Note that this equation does not subsume Theorem~\ref{thm:tree_osv}; if we set $k=0$, we may get a stronger bound than is warranted for, ex, the star.

\begin{proof}
The bound is sharp for a single cycle, where $R=\frac{n}{2}$, but it is slightly weaker for a path. We may build any other $G$ by beginning with a cycle, then adding a cycle $c$ or a pendant at vertex $v$, then repeating as needed. If we are careful to add all cycles to a given articulation point before we add any pendants, then we may add the cycles according to Lemma~\ref{lemma:cycle},
\[R(G + c) - R(G) \geq \frac{\sqrt{d_v+2}-\sqrt{d_v}}{\sqrt 2} + \frac{s_c-2}{2}\]
with equality if all the vertices adjacent to $v$ are degree 2. We may then add the vertices according to Lemma~\ref{lemma:leaf}
\[R(G + uv) - R(G) \geq \sqrt{d_v+1}-\sqrt{d_v} \geq \sqrt\frac{d_v+1}{2}-\sqrt\frac{d_v}{2}+ \sqrt\frac{1}{2} - 1 + \sqrt 2 - 1\]
with equality if $v$ is a leaf, and its one adjacent vertex is also a leaf. 

The desired inequality follows and is sharp either if $G$ is a path on three vertices or if there were no pendants added (thus no bridges in $G$) and when cycles were not added adjacent to preexisting articulation points.
\end{proof}

Note that if $G$ is a nontrivial cactus, we may deduce the actual minimum value of $R$, and the result resembles the result for trees:

\begin{cor}\label{cor:ntc_osv}
Let $G$ be a nontrivial cactus on $n$ vertices with $k$ cycles. Then
\[R \geq \frac{n - k - 1}{2} + \sqrt k\]
with equality for a bouquet of cycles.
\end{cor}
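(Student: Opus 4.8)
The plan is to feed the degree information of a nontrivial cactus into Theorem~\ref{thm:cactus_osv} and then optimize. Since $G$ is nontrivial, $b=0$, and the equivalent form of that theorem reads
\[R \geq \frac{1-n-k}{2} + \sum_v \sqrt{\frac{d_v}{2}},\]
so it suffices to prove $\sum_v \sqrt{d_v/2} \geq n - 1 + \sqrt k$, with equality exactly for a bouquet of cycles. First I would record the structure of the degree sequence: in a nontrivial cactus every vertex $v$ lies on some number $t_v \geq 1$ of cycles and on nothing else, so $d_v = 2t_v$, and in particular $x_v := d_v/2$ is a positive integer. Counting edge-endpoints, $\sum_v x_v = \tfrac12\sum_v d_v = n+k-1$ (the edge count of a nontrivial cactus), a sum of $n$ positive integers. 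So the problem reduces to minimizing $\sum_v \sqrt{x_v}$ over $n$-tuples of positive integers summing to $n+k-1$.

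The optimization is a standard smoothing argument using concavity of $\sqrt{\,\cdot\,}$. If a tuple has two entries $x_i \le x_j$ with $x_i \geq 2$, replace them by $x_i-1$ and $x_j+1$; since $t \mapsto \sqrt{t+1}-\sqrt t$ is strictly decreasing, $\sqrt{x_j+1}-\sqrt{x_j} < \sqrt{x_i}-\sqrt{x_i-1}$, so $\sum_v \sqrt{x_v}$ strictly drops. Iterating forces all but one entry down to $1$, leaving the unique minimizer $(k,1,\dots,1)$, for which $\sum_v \sqrt{x_v} = (n-1) + \sqrt k$. Substituting back gives
\[R \geq \frac{1-n-k}{2} + (n-1) + \sqrt k = \frac{n-k-1}{2} + \sqrt k,\]
as claimed.

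Finally I would check the equality case. Equality in the displayed bound forces equality in both steps at once: the degree sequence must be $(2k,2,\dots,2)$, so $G$ has a single vertex on all $k$ cycles and every other vertex on exactly one — a bouquet of $k$ cycles — and that graph has only one articulation point, hence also satisfies the equality hypothesis of Theorem~\ref{thm:cactus_osv} (no two articulation points adjacent); conversely the bouquet attains the bound. I do not expect a real obstacle here: the only point needing care is to confirm that the combinatorial minimizer of $\sum_v \sqrt{x_v}$ is actually realized by a genuine nontrivial cactus — it is, by the bouquet — so that sharpness is legitimate and not merely a feature of the relaxed optimization.
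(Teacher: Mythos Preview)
Your proof is correct and follows essentially the same approach as the paper: both invoke Theorem~\ref{thm:cactus_osv} with $b=0$ and then minimize $\sum_v \sqrt{d_v/2}$ subject to the fixed degree sum, identifying the minimizer as the bouquet configuration $(2k,2,\dots,2)$. The paper simply asserts this minimum without argument, whereas you supply the smoothing justification and verify the equality case carefully, so your version is in fact the more complete of the two.
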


\begin{proof}
Observe that $\sum \sqrt{d_v}$ attains its minimum value when a single vertex has maximal degree ($d_v = 2k$) and the others are all minimum degree ($d_v=2$). 
\end{proof}

In the case of a chemical nontrivial cactus, the bound from the theorem becomes especially nice. This result will be essential later in Theorems~\ref{thm:ntc_d} and \ref{thm:ntc_r}.

\begin{cor}\label{cor:chemical_ntc_osv}
If $G$ is a chemical nontrivial cactus on $n$ vertices with $k$ cycles,
\[R \geq \frac{n}{2} - (k-1)\left(\frac{3}{2} - \sqrt 2\right).\]
with equality when none of the articulation points are adjacent.
\end{cor}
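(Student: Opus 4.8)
The plan is to specialize Theorem~\ref{thm:cactus_osv} to the case $b=0$ and then pin down the degree sequence of $G$ exactly, exploiting the chemical hypothesis so that the valency sum in that theorem collapses to a closed form.

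First I would observe that every vertex of a nontrivial cactus has \emph{even} degree: since no two cycles of a cactus share an edge and a nontrivial cactus has no bridges, every edge of $G$ lies in exactly one cycle, and each cycle through a vertex $v$ uses exactly two edges at $v$; hence $d_v$ is twice the number of cycles through $v$. In particular $d_v \ge 2$ for all $v$, so the chemical bound $d_v \le 4$ forces $d_v \in \{2,4\}$, with the degree-$4$ vertices being precisely the articulation points (those lying in two cycles). Writing $a$ for the number of such vertices and counting edges two ways, $4a + 2(n-a) = \sum_v d_v = 2|E| = 2(n+k-1)$, so $a = k-1$.

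Next I would evaluate the right-hand side of Theorem~\ref{thm:cactus_osv} with $b=0$. The cycle sizes satisfy $\sum_c s_c = |E| = n+k-1$, so $\sum_c\bigl(\tfrac{s_c}{2}-1\bigr) = \tfrac{n+k-1}{2} - k = \tfrac{n-k-1}{2}$, and by the paragraph above $\sum_v\bigl(\sqrt{d_v/2}-1\bigr) = a(\sqrt2-1) = (k-1)(\sqrt2-1)$. Substituting gives
\[R \ge 1 + \frac{n-k-1}{2} + (k-1)(\sqrt2-1) = \frac{n-k+1}{2} + (k-1)(\sqrt2-1) = \frac{n}{2} - (k-1)\left(\frac32 - \sqrt2\right),\]
which is the asserted bound. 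Since every step after invoking Theorem~\ref{thm:cactus_osv} was an exact substitution (the degree multiset of a chemical nontrivial cactus being forced), equality here holds exactly when it holds in Theorem~\ref{thm:cactus_osv}; as $G$ is a nontrivial cactus (in particular not $P_3$), this happens precisely when no two articulation points are adjacent.

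There is no real obstacle; the only point demanding care is the parity argument that yields $d_v\in\{2,4\}$ and the count $a=k-1$, since this is what turns the valency sum of Theorem~\ref{thm:cactus_osv} into the stated closed form. It is worth noting the degenerate case $k=1$ (a single cycle), where $a=0$ and the bound reads $R\ge n/2$, in agreement with Example~\ref{ex:cycle}.
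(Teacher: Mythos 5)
Your proposal is correct and follows essentially the same route as the paper: specialize Theorem~\ref{thm:cactus_osv} with $b=0$ and use the fact that the chemical hypothesis forces every vertex of a nontrivial cactus to have degree $2$ or $4$, with exactly $k-1$ vertices of degree $4$. If anything, your degree-parity and edge-count argument is more carefully justified than the paper's one-line proof (which invokes the BC-tree being a path, an assumption not actually needed for the count $a=k-1$), and your arithmetic checks out.
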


\begin{proof}
Since the BC-tree is a path, the cycles and bridges occur in a chain, and there are $k-1$ articulation points which must therefore be degree 4, and all remaining vertices must be degree 2.
\end{proof}

\subsection{Cacti}
A similar result to Corollary~\ref{cor:chemical_ntc_osv} could be derived for generic cacti, but it will not be sharp since Theorem~\ref{thm:cactus_osv} is not sharp if the graph has any bridges. Instead, we separately develop a special result for use in the proof of Theorem~\ref{thm:cactus_r}; it is about 0.2 weaker than the corresponding result for nontrivial cacti, and it is sharp. Observe that the graph of Example~\ref{ex:lollipop} realizes this bound.

\begin{thm}\label{thm:star_cactus_osv}
Let $G$ be a cactus (with $k>0$ cycles and $b>0$ bridges and/or pendants) whose BC-tree is starlike with root of degree $m$. Then 
\[R \geq \frac{n}{2} - (k-1)\left(\frac{3}{2} - \sqrt 2 \right) - m\left(\frac{3}{2} - \frac{1}{\sqrt 3} - \sqrt\frac{2}{3}\right).\]
with equality when there are $m$ pendants and when no articulation points are adjacent.
\end{thm}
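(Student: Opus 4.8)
The plan is to build $G$ up one block at a time, starting from the cycle at the centre of its starlike BC-tree, and to control $R$ against the claimed bound using the sharp cases of Lemmas~\ref{lemma:leaf} and~\ref{lemma:cycle}.

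First I would reduce to one arm at a time. Let $B_0$ be the block at the centre of the BC-tree; since that centre has degree $m\ge 3$ it is either a cycle or an articulation point, and I treat the generic, tight case that it is a cycle, of size $s_0$, the articulation-point case being an easier variant in which the bound is not attained. Let $A_1,\dots,A_m$ be the $m$ arms, with $A_i$ carrying $n_i$ vertices, $q_i$ cycles, and $p_i$ bridges; then $n=s_0+\sum_i n_i$ and $k-1=\sum_i q_i$, so the right-hand side of the theorem equals
\[\frac{s_0}{2}+\sum_{i=1}^m\Bigl[\frac{n_i}{2}-q_i\Bigl(\tfrac32-\sqrt2\Bigr)-\Bigl(\tfrac32-\tfrac1{\sqrt3}-\sqrt{\tfrac23}\Bigr)\Bigr].\]
The $m$ arms attach at $m$ distinct vertices of $B_0$, and attaching one arm only lowers the degree sums $S$ appearing in Lemmas~\ref{lemma:leaf} and~\ref{lemma:cycle} at the attachment points of the others, while a smaller $S$ only increases the resulting gain in $R$; so attaching the arms in succession gives $R(G)\ge \tfrac{s_0}{2}+\sum_i\rho(A_i)$, where $\rho(A_i)$ is the increase in $R$ caused by attaching $A_i$ to the isolated cycle $B_0$ at a degree-$2$ vertex. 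Hence it suffices to show $\rho(A_i)\ge \tfrac{n_i}{2}-q_i(\tfrac32-\sqrt2)-(\tfrac32-\tfrac1{\sqrt3}-\sqrt{\tfrac23})$ for each $i$.

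To bound $\rho(A_i)$ I would build $A_i$ outward (the block order is forced, since an arm is a path in the BC-tree). Each block is glued to a vertex $v$ with $d_v\in\{1,2\}$ all of whose neighbours have degree $\ge2$, so I may use the first part of Lemma~\ref{lemma:cycle} for cycles — tracking the neighbour's degree $d_w$ when $d_v=1$, where the gain is at least $\tfrac{s-2}{2}+\sqrt{\tfrac23}-\tfrac1{\sqrt{d_w}}\bigl(1-\tfrac1{\sqrt3}\bigr)$ — the first part of Lemma~\ref{lemma:leaf} for a pendant at a leaf (gain at least $\tfrac1{\sqrt2}-\tfrac1{\sqrt{d_w}}\bigl(1-\tfrac1{\sqrt2}\bigr)\ge\tfrac12$), and the sharper second bound of Lemma~\ref{lemma:leaf} for a pendant at a degree-$2$ vertex (gain at least $\tfrac1{\sqrt3}+\sqrt{\tfrac23}-1$). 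Assigning each new bridge vertex the allowance $\tfrac12$ and each new $s$-cycle the allowance $\tfrac{s-1}{2}-(\tfrac32-\sqrt2)$, the allowances over $A_i$ sum to $\tfrac{n_i}{2}-q_i(\tfrac32-\sqrt2)$, and every block contributes at least its allowance except a pendant glued at a degree-$2$ cycle vertex, which in the worst case falls short by exactly $\tfrac32-\tfrac1{\sqrt3}-\sqrt{\tfrac23}$. I would then prove, by induction on the blocks of $A_i$ as they are attached, the invariant that the increase in $R$ so far is always at least (the allowances so far) $-\,(\tfrac32-\tfrac1{\sqrt3}-\sqrt{\tfrac23})$; applied to the last block this is exactly the required bound. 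The base case holds because a first block contributes at least its allowance minus $\tfrac32-\tfrac1{\sqrt3}-\sqrt{\tfrac23}$, with equality when it is a pendant and strict surplus when it is a cycle. For the inductive step the only danger is a pendant glued at a degree-$2$ vertex of a cycle $C$; but in an arm $C$ has exactly one free articulation point, so this pendant is the block immediately following $C$, and a finite look-back along the chain of blocks feeding into $C$ — using that a cycle, or a pendant, glued at a leaf whose neighbour has degree $\ge3$ carries a strictly positive surplus over its allowance — shows the running increase is at that moment at least the running sum of allowances, leaving exactly enough room to absorb the shortfall. Equality throughout forces every pendant to be glued at a vertex none of whose neighbours is already an articulation point, and forces each arm to consist of a (possibly empty) chain of cycles followed by a single pendant, i.e.\ $m$ pendants in all and no two articulation points adjacent; conversely all such graphs attain the bound.

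The main obstacle is this last induction: the per-block surpluses and shortfalls are of order $10^{-2}$, so the invariant does not follow block by block, and one must carry out the look-back using the precise — not merely asymptotic — constants from Lemma~\ref{lemma:cycle} and the sharper part of Lemma~\ref{lemma:leaf}, checking each configuration in which a degree-$2$ cycle vertex receives a pendant; everything else is routine bookkeeping.
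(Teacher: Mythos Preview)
Your approach is sound but genuinely different from the paper's. You build $G$ block by block from the central cycle outward, invoking Lemmas~\ref{lemma:leaf} and~\ref{lemma:cycle} at each step and tracking a running surplus over a per-block allowance. The paper instead works statically with the Caporossi--Gutman--Hansen--Pavlovi\'c reformulation $n-2R=\sum_{uv}w_{d_u,d_v}$: since the BC-tree is starlike, every articulation point has degree $3$ or $4$ and every other vertex degree $1$ or $2$, so each block contributes a fixed linear combination of the five relevant weights $w_{1,2},w_{1,3},w_{2,3},w_{2,4},w_{3,4}$; the proof then maximises this sum over the allowed counts (number of degree-$3$ articulation points, pendants, pendants on cycles) and finds the maximum at $b=m$ pendants, each on a cycle.

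The static approach sidesteps exactly the obstacle you flag: because the edge weights are computed once in the final graph, there is no need to amortise a $10^{-2}$-scale shortfall from one block against a later surplus, and the argument reduces to a small linear-programming-style optimisation. Your inductive route is closer in spirit to the proofs of Theorems~\ref{thm:tree_osv} and~\ref{thm:cactus_osv}, and it does go through (the look-back you describe works: a bridge at a degree-$2$ cycle vertex costs $\tfrac32-\tfrac1{\sqrt3}-\sqrt{\tfrac23}\approx0.106$, while a following cycle glued at the resulting leaf with neighbour of degree~$3$ recovers roughly $0.158$, and a following bridge at such a leaf recovers roughly $0.038$; the invariant never dips below $-0.106$). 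But the case analysis is more delicate than the paper's direct count, and the equality case is less transparent to read off.
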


In particular, if the BC-tree has a central block with only two articulation points (as it will be when, ex, the central block is a bridge), then
\[R \geq \frac{n}{2} - (k-1)\left(\frac{3}{2} - \sqrt 2 \right) - 3 + \frac{2}{\sqrt 3} + 2\sqrt\frac{2}{3}.\]

\begin{proof}
If the BC-tree is starlike, its root of degree $r$ corresponds to a block with $r$ articulation points, and its $r$ leaves correspond to pendants or cycles with single articulation points. All other blocks in the graph are bridges or cycles with a pair of articulation points. Note that $r=0$ implies the graph has no articulation points, and $r > 2$ implies the central block must be a cycle.

We will consider the individual contribution of each block to $R$. Let 
\[w_{x,y} = \left(\frac{1}{\sqrt x}-\frac{1}{\sqrt y}\right)^2\]
and recall the definition of $R$ given in Equation~\eqref{eqn:weight} for a connected graph:
\[n-2R = \sum_{uv}w_{d_u,d_v}\]
where the sum runs over all edges $uv$. A cycle $c$ with $m_c$ non-adjacent articulation points, of which $t_c$ are degree 3 and the others degree 4, will contribute to $2R-n$ by exactly:
\[2t_cw_{2,3}+2(m_c-t_c)w_{2,4}.\]
Note that, if articulation points are adjacent, the cycle actually contributes by a greater amount than the above equation indicates: for any pair of the same degree $d_v$ which are adjacent, this equation produces a value which is too small by $2w_{2,d_v}$ and, for every pair of a degree 3 and degree 4 which are adjacent, by $w_{2,3}+w_{2,4}-w_{3,4}>0$. 

Next, consider the contribution of a block $e$ which is a single edge. If it is a pendant, it contributes to $2R-n$ by exactly
\[t_ew_{1,3}+(1-t_e)w_{1,2},\]
and, if it is a bridge, then it contributes by
\[\left.\begin{cases} 0 & t_e = 0\\ w_{2,3} & t_e = 1\\ 0 & t_e = 2\end{cases}\right\} \leq t_ew_{2,3}\]
with equality when $t_e < 2$.

Now consider the contribution of all blocks in the cactus, cycles, bridges, and pendants (let $c$ run over cycles, $e$ over bridges/pendants, and $e_p$ separately over pendants):
\[n-2R \leq \sum_c 2m_cw_{2,4} + 2t_c(w_{2,3}-w_{2,4}) + \sum_e t_ew_{2,3} + \sum_{e_p} w_{1,2} + t_{e_p}(w_{1,3}-w_{1,2}-w_{2,3})\]
Let $t$ be the number of degree 3 vertices; since each is part of exactly one cycle and exactly one bridge/pendant, this quantity is
\[= t(3w_{2,3}-2w_{2,4}) + \sum_c 2m_cw_{2,4} + \sum_{e_p} w_{1,2} + t_{e_p}(w_{1,3}-w_{1,2}-w_{2,3})\]
Let $p$ be the number of pendants and $p'$ the number of pendants attached to cycles (i.e., with $t_{e_p} = 1$). Let $k$ be the number of cycles, of which one has $m$ articulation points ($m$ may be 2), $m-p$ have a single articulation point, and the rest have 2 articulation points. Note also $2w_{2,4} =w_{1,2}$. Now:
\[= t(3w_{2,3}-w_{1,2}) + 2(p+k-1)w_{1,2} + p'(w_{1,3}-w_{1,2}-w_{2,3})\]

Observe this quantity is decreasing in $t$ and increasing in $p$ and $p'$, so it is maximized if we rearrange the blocks to maximize the number of pendants and the number of pendants attached to cycles as well as minimize the number of adjacent cycle/bridge or cycle/pendant pairs. If the central cycle is a cycle, and if the total number of bridges $b$ is $b > m$, then we may indeed fix the central cycle and rearrange the other blocks so that there are $m$ pendants, all but one attached to a cycle, and the remaining pendant attached to a path of the remaining bridges, that is, $p=t=m$ and $p'=m-1$; then the bound becomes:
\[(2k-1)w_{1,2} + (m-1)w_{1,3} + (2m+1)w_{2,3} \]
and, if $b \leq m$, then $p=p'=t=b$, then this value is:
\[2(k-1)w_{1,2} + bw_{1,3} + 2bw_{2,3} \]
so the maximum value occurs when $b=m$, and thus
\[n-2R \leq 2(k-1)w_{1,2} + mw_{1,3} + 2mw_{2,3}.\]
Similarly, if the central block is a bridge (but the graph is not a tree), then the BC-tree is a path with $m=2$. In this case, we do not fix the central block, but we may still rearrange all the blocks so that, if $b > 2$, then $p=t=2$ and $p'=1$, else $b \leq 2$ and $p=p'=t=b$.

To attain equality, we must have the articulation points within any cycle not adjacent. We must also have $m$ pendants, each attached to a cycle, and no other bridges (which will follow if no articulation points are adjacent). 
\end{proof}

\section{Randi\'c index vs. graph radius/diameter}\label{sec:rd}

We now address the relationship between $R$ and the radius and diameter. This will involve the valency theorems above, although it will also require a fairly nuanced analysis of radius.

\subsection{Trees}
We approach first the question of the tree, where we will reprove Theorem~\ref{thm:d} (although substituting $e=n-1$ for consistency with later results) and verify Conjecture~\ref{conj:r}. While the diameter result is already known and the radius result not that difficult, this will gives us the pattern for later proofs. 

We will build a subgraph with maximal radius and diameter (as in the thick black subgraphs of Figure~\ref{fig:tree}), check $R$ against $r$ and $d$, and then add pendants to construct the entire graph.

\begin{figure}
\begin{subfigure}[scale=.5]{0.4\textwidth}
\begin{tikzpicture}
\draw[black, thick] (-1.3,-0.5) -- (-0.5, -0.5);
\draw[gray, thick] (-0.5,0.5) -- (0, 0);
\draw[black, thick] (-0.5,-0.5) -- (0, 0);
\draw[black, thick] (0,0) -- (0.8, 0);
\draw[black, thick] (0.8,0) -- (1.3, 0.5);
\draw[gray, thick] (1.3,0.5) -- (0.8, 1);
\draw[black, thick] (1.3,0.5) -- (2.7, 0.5);
\draw[gray, thick] (2.1,0.5) -- (2.6, 1);
\draw[gray, thick] (2.1,0.5) -- (2.6, 0);
\draw[black, thick] (2.1,0.5) -- (2.9, 0.5);
\filldraw[black] (-1.3,-0.5) circle (2pt)node[anchor=east] {$u$};
\filldraw[black] (-0.5,-0.5) circle (2pt);
\filldraw[gray] (-0.5,0.5) circle (2pt);
\filldraw[black] (0,0) circle (2pt);
\filldraw[black] (0.8,0) circle (3pt);
\filldraw[white] (0.8,0) circle (2pt);
\filldraw[black] (1.3,0.5) circle (2pt);
\filldraw[gray] (0.8,1) circle (2pt);
\filldraw[black] (2.1,0.5) circle (2pt);
\filldraw[gray] (2.6, 1) circle (2pt);
\filldraw[gray] (2.6, 0) circle (2pt);
\filldraw[black] (2.9, 0.5) circle (2pt)node[anchor=west] {$v$};
\end{tikzpicture}
\subcaption{}\label{fig:tree1}
\end{subfigure}
\begin{subfigure}[scale=0.5]{0.4\textwidth}
\begin{tikzpicture}
\draw[gray, thick] (-0.5,0.5) -- (0, 0);
\draw[black, thick] (-0.5,-0.5) -- (0, 0);
\draw[black, thick] (0,0) -- (0.8, 0);
\draw[black, thick] (0.8,0) -- (1.3, 0.5);
\draw[gray, thick] (1.3,0.5) -- (0.8, 1);
\draw[black, thick] (1.3,0.5) -- (2.1, 0.5);
\draw[gray, thick] (2.1,0.5) -- (2.6, 1);
\draw[gray, thick] (2.1,0.5) -- (2.6, 0);
\draw[black, thick] (2.1,0.5) -- (2.9, 0.5);
\filldraw[black] (-0.5,-0.5) circle (2pt)node[anchor=east] {$u$};
\filldraw[gray] (-0.5,0.5) circle (2pt);
\filldraw[black] (0,0) circle (2pt);
\filldraw[black] (0.8,0) circle (3pt);
\filldraw[white] (0.8,0) circle (2pt);
\filldraw[black] (1.3,0.5) circle (3pt);
\filldraw[white] (1.3,0.5) circle (2pt);
\filldraw[gray] (0.8,1) circle (2pt);
\filldraw[black] (2.1,0.5) circle (2pt);
\filldraw[gray] (2.6, 1) circle (2pt);
\filldraw[gray] (2.6, 0) circle (2pt);
\filldraw[black] (2.9, 0.5) circle (2pt)node[anchor=west] {$v$};
\end{tikzpicture}
\subcaption{}\label{fig:tree2}
\end{subfigure}

\caption{The diameter of a tree is realized by a subgraph (thick black edges), a path between two vertices $u$, $v$ of maximal distance. (A) If the path is odd, its middle vertex (the hollow vertex) will be the unique center, the vertex with maximum eccentricity, and $d=2r$. (B) If the path is even, the two vertices on either side of its middle edge will be the two centers, and $d=2r-1$.}\label{fig:tree}
\end{figure}
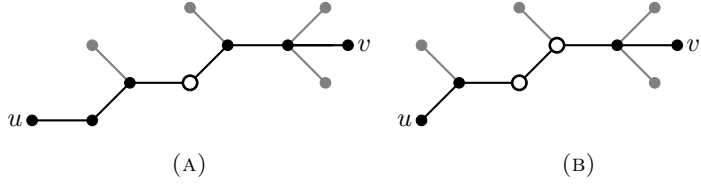

\begin{thm}\label{thm:tree_rd} Let $G$ be a tree on $n\geq 2$ vertices with $e$ edges. Then
\begin{enumerate}
\item\cite[Theorem]{yang2011randic}
\[\begin{split}
R - d &\geq -\frac{e}{2}+\sqrt 2 -1 \\
R - \frac{d}{2} &\geq \sqrt 2 - 1 \\
\frac{R}{d} &\geq \frac{n-3+2\sqrt 2}{n+e-1}
\end{split}\]
with equality if $G$ is a path with $n >2$. 
\item Additionally, if $G$ is an even path on more than 2 vertices,
\[R - r = \sqrt 2 - \frac{3}{2},\]
and, otherwise,
\[R - r \geq 0\]
with equality for the path on 2 vertices.
\end{enumerate}
\end{thm}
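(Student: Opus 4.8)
The plan is to realize both the diameter and the radius by a single subgraph --- a longest path --- and then rebuild $G$ from it by attaching pendants, tracking $R$ with Lemma~\ref{lemma:leaf} while checking that $r$ and $d$ never change. First dispose of $P_2$ directly: there $R=r=d=1$, which satisfies all three diameter inequalities strictly and gives $R-r=0$. For $n\geq 3$, fix a diametral path $P$, a copy of $P_{d+1}$ with $d\geq 2$. Because the center of a tree lies on every diametral path, the subgraphs $H_d$ and $H_r$ of Lemma~\ref{lemma:realize_rd} both coincide with $P$, so $r(G)=r(P)=\lceil d/2\rceil$. From Example~\ref{ex:path}, $R(P)=\frac{d-2}{2}+\sqrt 2$; substituting $e=n-1$ one checks this is exactly the claimed lower bound in each of the three diameter statements, and $R(P)-r$ equals $\sqrt 2-1$ when $P$ is an odd path and $\sqrt 2-\frac{3}{2}$ when $P$ is an even path.

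Next, note that the endpoints of $P$ are leaves of $G$ (otherwise $P$ would extend to a longer path), so $G$ is obtained from $P$ by successively attaching pendants, each at an interior vertex of $P$ or at a previously attached vertex. Every intermediate graph is a subtree of $G$ that contains $P$, hence has diameter exactly $d$ and therefore radius $\lceil d/2\rceil=r$: both invariants are frozen throughout. By Lemma~\ref{lemma:leaf}(1) each attachment strictly increases $R$, and if we add the missing vertices in order of their distance from $P$, the very first one lands on a degree-$2$ vertex and raises $R$ by at least $\sqrt 3-\sqrt 2$.

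These two observations close everything. For $R-d\geq-\frac{e}{2}+\sqrt 2-1$: starting from equality on $P$, each attached pendant drops the right side by $\frac{1}{2}$ while $d$ holds fixed and $R$ strictly increases, so the gap only grows; the same argument with $\frac{d}{2}$ constant gives $R-\frac{d}{2}\geq\sqrt 2-1$. For $\frac{R}{d}\geq\frac{n-3+2\sqrt 2}{n+e-1}$: attaching a pendant adds $1$ to both $n$ and $e$, which strictly decreases the right side (the needed inequality $n+e-1<2(n-3+2\sqrt 2)$ reduces to $1<\sqrt 2$ once $e=n-1$ is used) while $\frac{R}{d}$ strictly increases; so equality in all three holds precisely for paths on $n>2$ vertices. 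For the radius, a path gives the base value directly ($\sqrt 2-\frac{3}{2}$ for an even path on more than two vertices, $\sqrt 2-1>0$ for an odd path), while a tree with at least one pendant has $R-r\geq(R(P)-r)+(\sqrt 3-\sqrt 2)\geq(\sqrt 2-\frac{3}{2})+(\sqrt 3-\sqrt 2)=\sqrt 3-\frac{3}{2}>0$; hence $R-r\geq 0$ with equality only at $P_2$.

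The one place needing care is the invariance claim, that attaching pendants to $P$ changes neither $d$ nor $r$. It rests on two standard tree facts --- the center lies on every diametral path, and a subtree containing a fixed longest path inherits its diameter (and so its radius) --- together with, for the radius bound, the small numerical point that a single pendant on an even diametral path already lifts $R-r$ strictly above $0$. Everything else is the arithmetic of Example~\ref{ex:path} and Lemma~\ref{lemma:leaf}.
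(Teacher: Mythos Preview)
Your proof is correct and follows essentially the same route as the paper's: verify the bounds on the diametral path $P$ using Example~\ref{ex:path}, then rebuild $G$ by attaching pendants and invoke Lemma~\ref{lemma:leaf} to show $R$ increases while $d$ and $r$ stay fixed, with the extra observation that the first pendant (necessarily at a degree-$2$ vertex) contributes at least $\sqrt 3-\sqrt 2$ to handle the even-path radius case. You are somewhat more explicit than the paper in justifying why the first pendant lands at a degree-$2$ vertex and in checking that the right-hand side of the $R/d$ bound is decreasing, but the argument is the same.
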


We reprove this theorem to demonstrate techniques we will use for cacti.

Note that $R$ is approximately $r - 0.1$ for even paths, $r + 0.4$ for odd paths, and increasing as the tree branches further.

\begin{proof}
By Example~\ref{ex:path}, all paths satisfy the theorem: $d=n-1$ and $r = \left\lfloor \frac{n}{2} \right\rfloor$, while $R = 1$ for $P_2$ and $R=\frac{n-3}{2} + \sqrt 2$ for longer paths. In fact, any tree with diameter 1 is necessarily a path and thus satisfies the theorem as well. 

If $G$ is not a path, then choose two vertices $u, v$ of maximum distance and find the shortest path $P$ between them (ex, the thick black subgraphs in Figure~\ref{fig:tree}). This is a path of length $d > 1$; in particular, it is itself a graph with the same diameter as $G$, and it obeys the bounds on $d$. Next, add pendants to fill out the tree. By Lemma~\ref{lemma:leaf}, adding a pendant increases $R$ while leaving $d$ fixed by assumption, and the right-hand sides are non-increasing as functions of $n$, so all three inequalities on diameter are inductively satisfied.

Similarly, if the tree has even diameter like $T_{1}$, it has a unique center $a$ and $d=2r$, and its maximal diameter subgraph $P$ is an even path. Then $P$ satisfies the theorem, and, once more, adding pendants to fill out $G$ increases $R$ without altering $r$. 

On the other hand, if the tree has odd diameter like $T_{2}$, then it has two adjacent centers $a_1$ and $a_2$, its diameter is $d=2r-1$, and $P$ is an even path that contains both centers. Alas, the radius obeys $R \approx r-0.1$; however, adding a single pendant $l$ at some vertex $v$ (which has degree 2 by assumption, else adding it would alter $d$) will increase $R$:
\[R(P + l) \geq R(P) + \sqrt 3 - \sqrt 2 = r + \sqrt 3 - \frac{3}{2} > r.\]
After this, adding additional pendants to fill out $G$ will only increase $R$ further without altering $r$.
\end{proof}

\subsection{Nontrivial cacti}

A similar technique works for a nontrivial cactus. 

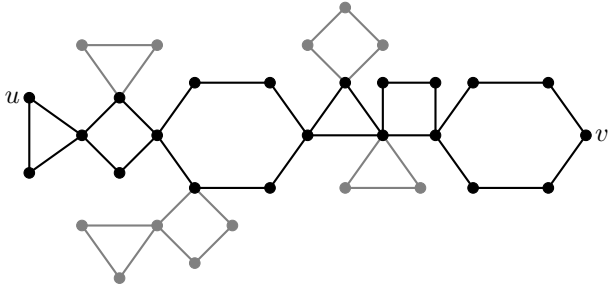
\begin{figure}
\begin{tikzpicture}
\draw[gray, thick] (0.4,-1.2) -- (-0.6,-1.2);
\draw[gray, thick] (0.4,-1.2) -- (-0.1,-1.9);
\draw[gray, thick] (-0.6,-1.2) -- (-0.1,-1.9);

\draw[gray, thick] (-0.1,0.5) -- (0.4,1.2);
\draw[gray, thick] (-0.1,0.5) -- (-0.6,1.2);
\draw[gray, thick] (0.4,1.2) -- (-0.6,1.2);

\draw[black, thick] (-0.6,0) -- (-1.3,0.5);
\draw[black, thick] (-0.6,0) -- (-1.3,-0.5);
\draw[black, thick] (-1.3,0.5) -- (-1.3,-0.5);

\draw[black, thick] (-0.1,0.5) -- (0.4,0);
\draw[black, thick] (-0.1,-0.5) -- (0.4,0);
\draw[black, thick] (-0.1,0.5) -- (-0.6,0);
\draw[black, thick] (-0.1,-0.5) -- (-0.6,0);

\draw[gray, thick] (0.9,-0.7) -- (0.4,-1.2);
\draw[gray, thick] (0.9,-0.7) -- (1.4,-1.2);
\draw[gray, thick] (0.9,-1.7) -- (0.4,-1.2);
\draw[gray, thick] (0.9,-1.7) -- (1.4,-1.2);

\draw[black, thick] (0.4,0) -- (0.9,0.7);
\draw[black, thick] (0.4,0) -- (0.9,-0.7);
\draw[black, thick] (0.9,0.7) -- (1.9,0.7);
\draw[black, thick] (0.9,-0.7) -- (1.9,-0.7);
\draw[black, thick] (1.9,0.7) -- (2.4,0);
\draw[black, thick] (1.9,-0.7) -- (2.4,0);

\draw[black, thick] (2.4,0) -- (2.9,0.7);
\draw[black, thick] (2.4,0) -- (3.4,0);
\draw[black, thick] (2.9,0.7) -- (3.4,0);

\draw[gray, thick] (2.4,1.2) -- (2.9,0.7);
\draw[gray, thick] (3.4,1.2) -- (2.9,0.7);
\draw[gray, thick] (2.4,1.2) -- (2.9,1.7);
\draw[gray, thick] (3.4,1.2) -- (2.9,1.7);

\draw[black, thick] (3.4,0) -- (3.4, 0.7);
\draw[black, thick] (3.4,0) -- (4.1, 0);
\draw[black, thick] (3.4,0.7) -- (4.1, 0.7);
\draw[black, thick] (4.1,0.7) -- (4.1, 0);

\draw[gray, thick] (3.4,0) -- (3.9, -0.7);
\draw[gray, thick] (3.4,0) -- (2.9, -0.7);
\draw[gray, thick] (3.9,-0.7) -- (2.9, -0.7);

\draw[black, thick] (4.1,0) -- (4.6, 0.7);
\draw[black, thick] (4.1,0) -- (4.6, -0.7);
\draw[black, thick] (5.6,0.7) -- (4.6, 0.7);
\draw[black, thick] (5.6,-0.7) -- (4.6, -0.7);
\draw[black, thick] (5.6,0.7) -- (6.1, 0);
\draw[black, thick] (5.6,-0.7) -- (6.1, 0);

\filldraw[gray] (-0.1,-1.9) circle (2pt);
\filldraw[gray] (-0.6,-1.2) circle (2pt);
\filldraw[gray] (-0.6,1.2) circle (2pt);
\filldraw[gray] (0.4,1.2) circle (2pt);
\filldraw[black] (-1.3,0.5) circle (2pt)node[anchor=east] {$u$};
\filldraw[black] (-1.3,-0.5) circle (2pt);
\filldraw[black] (-0.1,0.5) circle (2pt);
\filldraw[black] (-0.1,-0.5) circle (2pt);
\filldraw[black] (-0.6,0) circle (2pt);
\filldraw[gray] (0.4,-1.2) circle (2pt);
\filldraw[gray] (1.4,-1.2) circle (2pt);
\filldraw[gray] (0.9,-1.7) circle (2pt);
\filldraw[black] (0.4,0) circle (2pt);
\filldraw[black] (0.9,0.7) circle (2pt);
\filldraw[black] (0.9,-0.7) circle (2pt);
\filldraw[black] (1.9,0.7) circle (2pt);
\filldraw[black] (1.9,-0.7) circle (2pt);
\filldraw[black] (2.4,0) circle (2pt);
\filldraw[black] (2.9,0.7) circle (2pt);
\filldraw[black] (3.4,0) circle (2pt);
\filldraw[black] (3.4,0.7) circle (2pt);
\filldraw[black] (4.1,0.7) circle (2pt);
\filldraw[gray] (3.9,-0.7) circle (2pt);
\filldraw[gray] (2.9,-0.7) circle (2pt);
\filldraw[gray] (2.9,1.7) circle (2pt);
\filldraw[black] (4.1,0) circle (2pt);
\filldraw[gray] (2.4,1.2) circle (2pt);
\filldraw[gray] (3.4,1.2) circle (2pt);
\filldraw[black] (4.6,0.7) circle (2pt);
\filldraw[black] (4.6,-0.7) circle (2pt);
\filldraw[black] (5.6,0.7) circle (2pt);
\filldraw[black] (5.6,-0.7) circle (2pt);
\filldraw[black] (6.1,0) circle (2pt)node[anchor=west] {$v$};
\end{tikzpicture}
\caption{A nontrivial cactus, a graph whose blocks are all cycles. The vertices $u$ and $v$ are of maximal distance, and the diameter is realized by the thick black subgraph $H_d$, the subgraph of all cycles containing these two points and the path between them, in other words, the smallest connected nontrivial cactus subgraph containing both of these points.}\label{fig:ntc_d}
\end{figure}

\begin{thm}\label{thm:ntc_d}
Let $G$ be a nontrivial cactus on $n$ vertices with $k$ cycles. Then
\[\begin{split}
R - d &\geq -(k - 1)\left(2 - \sqrt 2 \right)\\
R-\frac{d}{2} &\geq \frac{n}{4} -(k-1)\left(\frac{7}{4} - \sqrt 2\right)\\
\end{split}\]
with equality if the graph has BC-tree a path and is longitudinally symmetric.
\end{thm}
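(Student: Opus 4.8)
The plan is to mimic the tree argument of Theorem~\ref{thm:tree_rd}. There are two moves: first reduce to the diameter‑realizing subgraph $H_d$ of Lemma~\ref{lemma:realize_rd}, where the bounds can be read off from the valency estimate of Corollary~\ref{cor:chemical_ntc_osv} together with the diameter estimate of Lemma~\ref{lemma:rd_vs_n}; then rebuild $G$ from $H_d$ by attaching the remaining cycles one at a time, checking that each attachment preserves both inequalities. The value of having passed to $H_d$ first is that $H_d$ turns out to be a \emph{chemical} nontrivial cactus with BC-tree a path, which is exactly the situation in which Corollary~\ref{cor:chemical_ntc_osv} is available and produces the constants $2-\sqrt2$ and $\tfrac74-\sqrt2$.

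\emph{Reduction and base case.} Pick $u,v$ at distance $d$, a shortest path $P$ between them, and let $H_d$ be the union of the blocks meeting $P$. Since $G$ is a nontrivial cactus every block is a cycle, and since $P$ is a geodesic it enters and leaves each cycle at most once; so the blocks of $H_d$ form a chain, its BC-tree is a path, each articulation point of $H_d$ lies in exactly two of these cycles and therefore has degree $4$, and every other vertex has degree $2$. Thus $H_d$ is a chemical nontrivial cactus, say on $n'$ vertices with $k'\le k$ cycles, and $d(H_d)=d$ by Lemma~\ref{lemma:realize_rd}. Corollary~\ref{cor:chemical_ntc_osv} gives $R(H_d)\ge \tfrac{n'}{2}-(k'-1)(\tfrac32-\sqrt2)$ and Lemma~\ref{lemma:rd_vs_n} (BC-tree a path, $b=0$) gives $d\le \tfrac{n'+k'-1}{2}$; subtracting $d$ yields $R(H_d)-d\ge -(k'-1)(2-\sqrt2)$, and subtracting $\tfrac d2$ yields $R(H_d)-\tfrac d2\ge \tfrac{n'}{4}-(k'-1)(\tfrac74-\sqrt2)$, i.e.\ the claimed bounds with $(n',k')$ in place of $(n,k)$.

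\emph{Rebuilding $G$.} Now add the blocks of $G$ outside $H_d$ one at a time, each a cycle glued at a single vertex, in an order moving outward along the BC-tree, so every intermediate graph is a connected union of blocks of $G$ containing $H_d$. For such a block-closed subgraph, a geodesic in $G$ between two of its vertices stays inside it, so its diameter is frozen at $\dist(u,v)=d$ throughout. When a cycle $c$ of size $s_c$ is glued at $v$, all neighbours of $v$ have degree $\ge 2$ (we stay inside a nontrivial cactus), so the first part of Lemma~\ref{lemma:cycle} gives $\Delta R\ge \tfrac{\sqrt{d_v+2}-\sqrt{d_v}}{\sqrt2}+\tfrac{s_c-2}{2}>\tfrac{s_c-2}{2}\ge 0$, while $n$ grows by $s_c-1$ and $k$ by $1$. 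For $R-d$ the right-hand side $-(k-1)(2-\sqrt2)$ only decreases, so $\Delta R\ge 0$ suffices; for $R-\tfrac d2$ the right-hand side grows by $\tfrac{s_c-1}{4}-(\tfrac74-\sqrt2)$, and the elementary inequality $\tfrac{s_c-2}{2}\ge \tfrac{s_c-1}{4}-(\tfrac74-\sqrt2)$ (equivalently $\tfrac{s_c}{4}\ge \sqrt2-1$, true for $s_c\ge3$) shows $\Delta R$ covers it. Both inequalities therefore persist to $G$. Equality forces that no cycle was ever added ($G=H_d$, BC-tree a path) and that both Corollary~\ref{cor:chemical_ntc_osv} and the diameter bound of Lemma~\ref{lemma:rd_vs_n} were tight for $H_d$ — no two articulation points adjacent, all cycles even, arranged straight — which is precisely ``$G$ longitudinally symmetric''.

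\emph{Main obstacle.} The substantive point is the reduction: one must verify that a geodesic in a nontrivial cactus meets each cycle in a single arc, so that $H_d$ is genuinely a chain of cycles with degree-$4$ joints and the \emph{chemical} valency bound applies, and one must trust Lemma~\ref{lemma:realize_rd} together with the block-closure observation to keep the diameter exactly $d$ during the rebuild. Once these structural facts are secured, the only calculations are the two line-by-line comparisons above, both of which reduce to trivial inequalities in $s_c$.
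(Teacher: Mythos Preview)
Your proposal is correct and follows essentially the same route as the paper: reduce to the diameter-realizing chain $H_d$ via Lemma~\ref{lemma:realize_rd}, apply Corollary~\ref{cor:chemical_ntc_osv} together with Lemma~\ref{lemma:rd_vs_n} there, and then rebuild $G$ by attaching cycles using Lemma~\ref{lemma:cycle}. You supply somewhat more detail than the paper does (why $H_d$ is chemical, why the diameter stays frozen block by block), but the argument and the key lemmas invoked are the same.
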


This improves on two of Yang and Lu's bounds of Theorem~\ref{thm:d} for nontrivial cacti. We further have a provisional improvement on the last bound (see Conjecture~\ref{conj:d_new}); it follows immediately from the work below for nontrivial cacti whose BC-trees are paths, but it is difficult to see how to navigate the inductive step:
\[\frac{R}{d} \geq \frac{n - (k-1)(3-2\sqrt 2)}{n+k-1}\]

\begin{proof}
Example~\ref{ex:cycle} covers the case where $G$ is a single cycle $C_n$: $R=\frac{n}{2}$ and $d=r=\left\lfloor\frac{n}{2}\right\rfloor$. 

For a general nontrivial cactus, we will identify a subgraph $H_d$ with the same diameter as $G$ (see Lemma~\ref{lemma:realize_rd}, seen as the dark subgraph Figure~\ref{fig:ntc_d}), verify the bounds there, then add additional cycles as needed to fill out $G$. Note $H_d$ is a nontrivial cactus and, in fact, has BC-tree a path and is a chemical graph. 

Apply Lemma~\ref{lemma:rd_vs_n} and Corollary~\ref{cor:chemical_ntc_osv}; the bounds follows, and equality occurs if the BC-tree is a path and the graph is longitudinally symmetric (note that this implies the articulation points are not adjacent).

Next, add cycles to fill out the original graph $G$. By Lemma~\ref{lemma:cycle}, this increases $R$ while leaving $d$ fixed by assumption, so the inequality on $R-d$ is immediately satisfied. For $R-\frac{d}{2}$, observe that $R$ increases by at least $\frac{s_c-2}{2}$, whereas the right-hand side increases by $\frac{s_c-1}{4} - \frac{7}{4} + \sqrt 2,$ which is smaller since $s_c \geq 3$.
\end{proof}


The case for radius is slightly more complicated because we will need a different subgraph to capture the radius. This will verify Conjecture~\ref{conj:r} for nontrivial cacti.

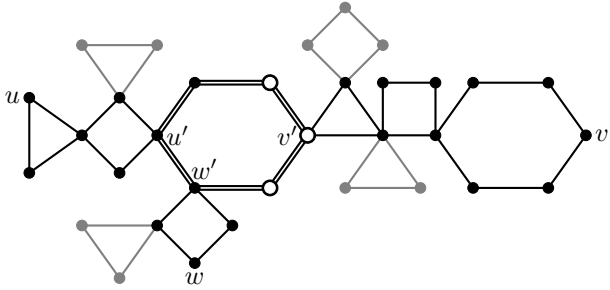
\begin{figure}
\begin{tikzpicture}
\draw[gray, thick] (0.4,-1.2) -- (-0.6,-1.2);
\draw[gray, thick] (0.4,-1.2) -- (-0.1,-1.9);
\draw[gray, thick] (-0.6,-1.2) -- (-0.1,-1.9);

\draw[gray, thick] (-0.1,0.5) -- (0.4,1.2);
\draw[gray, thick] (-0.1,0.5) -- (-0.6,1.2);
\draw[gray, thick] (0.4,1.2) -- (-0.6,1.2);

\draw[black, thick] (-0.6,0) -- (-1.3,0.5);
\draw[black, thick] (-0.6,0) -- (-1.3,-0.5);
\draw[black, thick] (-1.3,0.5) -- (-1.3,-0.5);

\draw[black, thick] (-0.1,0.5) -- (0.4,0);
\draw[black, thick] (-0.1,-0.5) -- (0.4,0);
\draw[black, thick] (-0.1,0.5) -- (-0.6,0);
\draw[black, thick] (-0.1,-0.5) -- (-0.6,0);

\draw[black, thick] (0.9,-0.7) -- (0.4,-1.2);
\draw[black, thick] (0.9,-0.7) -- (1.4,-1.2);
\draw[black, thick] (0.9,-1.7) -- (0.4,-1.2);
\draw[black, thick] (0.9,-1.7) -- (1.4,-1.2);

\draw[black, double, thick] (0.4,0) -- (0.9,0.7);
\draw[black, double, thick] (0.4,0) -- (0.9,-0.7);
\draw[black, double, thick] (0.9,0.7) -- (1.9,0.7);
\draw[black, double, thick] (0.9,-0.7) -- (1.9,-0.7);
\draw[black, double, thick] (1.9,0.7) -- (2.4,0);
\draw[black, double, thick] (1.9,-0.7) -- (2.4,0);

\draw[black, thick] (2.4,0) -- (2.9,0.7);
\draw[black, thick] (2.4,0) -- (3.4,0);
\draw[black, thick] (2.9,0.7) -- (3.4,0);

\draw[gray, thick] (2.4,1.2) -- (2.9,0.7);
\draw[gray, thick] (3.4,1.2) -- (2.9,0.7);
\draw[gray, thick] (2.4,1.2) -- (2.9,1.7);
\draw[gray, thick] (3.4,1.2) -- (2.9,1.7);

\draw[black, thick] (3.4,0) -- (3.4, 0.7);
\draw[black, thick] (3.4,0) -- (4.1, 0);
\draw[black, thick] (3.4,0.7) -- (4.1, 0.7);
\draw[black, thick] (4.1,0.7) -- (4.1, 0);

\draw[gray, thick] (3.4,0) -- (3.9, -0.7);
\draw[gray, thick] (3.4,0) -- (2.9, -0.7);
\draw[gray, thick] (3.9,-0.7) -- (2.9, -0.7);

\draw[black, thick] (4.1,0) -- (4.6, 0.7);
\draw[black, thick] (4.1,0) -- (4.6, -0.7);
\draw[black, thick] (5.6,0.7) -- (4.6, 0.7);
\draw[black, thick] (5.6,-0.7) -- (4.6, -0.7);
\draw[black, thick] (5.6,0.7) -- (6.1, 0);
\draw[black, thick] (5.6,-0.7) -- (6.1, 0);

\filldraw[gray] (-0.1,-1.9) circle (2pt);
\filldraw[gray] (-0.6,-1.2) circle (2pt);
\filldraw[gray] (-0.6,1.2) circle (2pt);
\filldraw[gray] (0.4,1.2) circle (2pt);
\filldraw[black] (-1.3,0.5) circle (2pt)node[anchor=east] {$u$};
\filldraw[black] (-1.3,-0.5) circle (2pt);
\filldraw[black] (-0.1,0.5) circle (2pt);
\filldraw[black] (-0.1,-0.5) circle (2pt);
\filldraw[black] (-0.6,0) circle (2pt);
\filldraw[black] (0.4,-1.2) circle (2pt);
\filldraw[black] (1.4,-1.2) circle (2pt);
\filldraw[black] (0.9,-1.7) circle (2pt)node[anchor=north] {$w$};
\filldraw[black] (0.4,0) circle (2pt)node[anchor=west] {$u'$};
\filldraw[black] (0.9,0.7) circle (2pt);
\filldraw[black] (0.9,-0.7) circle (2pt)node[anchor=south] {$\phantom{w}w'$};
\filldraw[black] (1.9,0.7) circle (3pt);
\filldraw[white] (1.9,0.7) circle (2pt);
\filldraw[black] (1.9,-0.7) circle (3pt);
\filldraw[white] (1.9,-0.7) circle (2pt);
\filldraw[black] (2.4,0) circle (3pt)node[anchor=east] {$v'$};
\filldraw[white] (2.4,0) circle (2pt);
\filldraw[black] (2.9,0.7) circle (2pt);
\filldraw[black] (3.4,0) circle (2pt);
\filldraw[black] (3.4,0.7) circle (2pt);
\filldraw[black] (4.1,0.7) circle (2pt);
\filldraw[gray] (3.9,-0.7) circle (2pt);
\filldraw[gray] (2.9,-0.7) circle (2pt);
\filldraw[gray] (2.9,1.7) circle (2pt);
\filldraw[black] (4.1,0) circle (2pt);
\filldraw[gray] (2.4,1.2) circle (2pt);
\filldraw[gray] (3.4,1.2) circle (2pt);
\filldraw[black] (4.6,0.7) circle (2pt);
\filldraw[black] (4.6,-0.7) circle (2pt);
\filldraw[black] (5.6,0.7) circle (2pt);
\filldraw[black] (5.6,-0.7) circle (2pt);
\filldraw[black] (6.1,0) circle (2pt)node[anchor=west] {$v$};
\end{tikzpicture}
\caption{A nontrivial cactus. The three centers (the hollow vertices) are all contained in a single central cycle (the double-lined cycle). Each of its articulation points $u'$, $v'$, and $w'$ separates some vertices from the central cycle, and one most distant from the central cycle is selected and labelled $u$, $v$, and $w$, respectively. The radius is realized by a subgraph $H_r$ (the thick dark subgraph), the smallest connected nontrivial cactus subgraph containing these vertices and the central cycle. Its BC-tree is starlike with its root corresponding to the central cycle.}\label{fig:ntc_r}
\end{figure}

\begin{thm}\label{thm:ntc_r}
Let $G$ be a nontrivial cactus on $n$ vertices with $k$ cycles. Then
\[R - r \geq (k-1)\left(\sqrt 2 - 1\right) + \frac{1}{2}\]
with equality when $G$ is an even cycle. 
\end{thm}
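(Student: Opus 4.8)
The plan is to follow the template of Theorem~\ref{thm:ntc_d}: dispose of a single cycle directly, then pass to the subgraph $H_r$ of Lemma~\ref{lemma:realize_rd} that realizes the radius, check the inequality there, and finally rebuild $G$ by adjoining its remaining cycles one at a time. For a single cycle $C_n$, Example~\ref{ex:cycle} gives $R=n/2$ and $r=\lfloor n/2\rfloor$, so $R-r=\tfrac12$ when $n$ is odd (equality, since $k=1$) and $R-r=0=(k-1)(\sqrt2-1)$ when $n$ is even; the even cycle thus sits exactly on the boundary, and any nontrivial cactus whose only block is a cycle is covered by this case.

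Now suppose $G$ has a cut vertex, and let $H_r$ be the subgraph of Lemma~\ref{lemma:realize_rd}, so $r(H_r)=r(G)$; as in Figure~\ref{fig:ntc_r}, $H_r$ is a nontrivial cactus whose BC-tree is starlike with root the central block $B$, each of the $m\geq1$ articulation points of $B$ carrying an arm that is a chain of cycles. Write $n'=|V(H_r)|$ and let $k'$ be the number of cycles of $H_r$. The point of the reduction is that every articulation point of $H_r$ lies in exactly two cycles, hence has degree $4$, and a count of the BC-tree edges shows there are exactly $k'-1$ of them, all other vertices having degree $2$. Feeding this degree information into Theorem~\ref{thm:cactus_osv} (with $b=0$, using that $H_r$ has $n'+k'-1$ edges) yields
\[R(H_r)\;\geq\;\frac{1+n'-k'}{2}+(k'-1)\bigl(\sqrt2-1\bigr),\]
with equality when no two articulation points are adjacent. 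Since $H_r$ has an arm it is not an even cycle, so Lemma~\ref{lemma:rd_vs_n} gives $r(H_r)\leq\tfrac{n'-k'}{2}$; subtracting,
\[R(H_r)-r(H_r)\;\geq\;(k'-1)\bigl(\sqrt2-1\bigr)+\tfrac12,\]
the desired bound for $H_r$.

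Finally, recover $G$ from $H_r$ by adjoining the cycles of $G$ absent from $H_r$ one at a time. Every intermediate graph is a nontrivial cactus, so each new cycle $c$ is attached at a vertex $v$ all of whose neighbors have degree $\geq2$, and Lemma~\ref{lemma:cycle}\eqref{lemma:cycle1} gives
\[R(G'+c)-R(G')\;\geq\;\frac{\sqrt{d_v+2}-\sqrt{d_v}}{\sqrt2}+\frac{s_c-2}{2}\;>\;\frac{s_c-2}{2}\;\geq\;\tfrac12\;>\;\sqrt2-1.\]
Adjoining a block never decreases the radius and cannot raise it above $r(G)=r(H_r)$, so $r$ is constant at every step, while the right-hand side of the target inequality grows by exactly $\sqrt2-1$ per added cycle. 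Hence the bound propagates from $H_r$ to $G$; since each increment of $R$ is strict, equality forces $G=H_r$ together with the tightness conditions above (no two cut vertices adjacent, cycles as small as the radius allows).

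The genuinely delicate steps, exactly as with $H_d$ in Theorem~\ref{thm:ntc_d}, are structural rather than computational: first, establishing that $H_r$ has precisely the claimed shape, namely a nontrivial cactus with starlike BC-tree whose only vertices of degree above $2$ are the $k'-1$ cut vertices of degree exactly $4$, so that Theorem~\ref{thm:cactus_osv} contributes the term $(k'-1)(\sqrt2-1)$ and nothing weaker; and second, confirming that $H_r$ collapses to a single (even) cycle only when $G$ itself does, so that the favorable estimate $r(H_r)\leq\tfrac{n'-k'}{2}$ of Lemma~\ref{lemma:rd_vs_n}, rather than the larger even-cycle value $\tfrac{n'-k'+1}{2}$, is available in all remaining cases. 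Granting these, the rest is a routine combination of Lemmas~\ref{lemma:rd_vs_n} and \ref{lemma:cycle}.
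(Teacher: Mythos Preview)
Your overall architecture matches the paper's proof exactly: pass to the radius-realizing subgraph $H_r$ of Lemma~\ref{lemma:realize_rd}, observe it is a chemical nontrivial cactus (all cut vertices of degree~$4$, the remaining vertices of degree~$2$, hence $k'-1$ cut vertices in all), bound $R(H_r)$ below via the valency estimate (your direct use of Theorem~\ref{thm:cactus_osv} is equivalent to the paper's appeal to its specialization Corollary~\ref{cor:chemical_ntc_osv}), bound $r(H_r)$ above by Lemma~\ref{lemma:rd_vs_n}, subtract, and then rebuild $G$ cycle by cycle using Lemma~\ref{lemma:cycle}. The structural facts you single out as ``delicate'' are precisely what Lemma~\ref{lemma:realize_rd} and the construction of $H_r$ supply.

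There is, however, a genuine slip in your base case. For an even cycle you correctly compute $R-r=0$, then write ``$0=(k-1)(\sqrt2-1)$'' and declare that the even cycle ``sits exactly on the boundary.'' But the bound in the theorem is $(k-1)(\sqrt2-1)+\tfrac12=\tfrac12$, not $(k-1)(\sqrt2-1)=0$, so the even cycle in fact \emph{violates} the inequality as stated; it is the \emph{odd} cycle that achieves equality, as your own calculation $R-r=\tfrac12$ already shows. This defect is inherited from the theorem's equality clause, which evidently has ``even'' where ``odd'' is meant (Lemma~\ref{lemma:rd_vs_n} makes the source of the $\tfrac12$ visible: the even cycle's exceptional radius $\tfrac{n-k+1}{2}$ exceeds the generic bound $\tfrac{n-k}{2}$ by exactly $\tfrac12$). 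You should flag the discrepancy rather than write around it; the remainder of your argument is sound for every nontrivial cactus that is not a single even cycle.
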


\begin{proof}
We follow the general scheme of Theorem~\ref{thm:ntc_d}, except for the definition of the subgraph $H=H_r$ from Lemma~\ref{lemma:realize_rd}, the dark subgraph in Figure~\ref{fig:ntc_r}; its BC-tree is starlike with the root corresponding to $B$. 

Lemma~\ref{lemma:rd_vs_n} and Corollary~\ref{cor:chemical_ntc_osv} now combine to give the desired bound, with equality only for an even cycle.

We may now add additional cycles to $H$ to fill out $G$, which increases $R$ by more than $\frac{1}{2} $ according to Lemma~\ref{lemma:cycle} and the right-hand side by only $\sqrt 2 -1$.
\end{proof}

\subsection{Cacti}
Finally, we engage with a generic cactus to complete the proof of Corollary~\ref{cor:d} and improve Yang and Lu's bounds of Theorem~\ref{thm:d}. 

\begin{thm}\label{thm:cactus_d} 
Let $G$ be a cactus with $k>0$ cycles ($t$ of which are 3-cycles) and $b>0$ bridges. Then:
\[R - d \geq -\frac{b}{2} - (k-1)\left(2 - \sqrt 2 \right) - 3 + \frac{2}{\sqrt 3} + 2\sqrt\frac{2}{3}.\]
and
\[R - \frac{d}{2} \geq \frac{n-b}{4} - (k-1)\left(\frac{7}{4} - \sqrt 2 \right) - 3 + \frac{2}{\sqrt 3} + 2\sqrt\frac{2}{3}.\]
with equality when $G$ has BC-tree a path, has two pendants attached to the end cycles (i.e., they correspond to leaves in the BC-tree), and is longitudinally symmetric.
\end{thm}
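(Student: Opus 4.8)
The plan is to mirror the proof of Theorem~\ref{thm:ntc_d}: pass to a subgraph realizing the diameter, establish the two bounds there using the valency results of Section~\ref{sec:osv} together with Lemma~\ref{lemma:rd_vs_n}, and then re-attach the remaining blocks of $G$ one at a time.

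Let $H_d$ be the diameter-realizing subgraph of Lemma~\ref{lemma:realize_rd} — the union of the blocks meeting a shortest path $P$ between two vertices of maximal distance — and write $n_0, k_0, b_0$ for its order, number of cycles, and number of bridges. Then $d(H_d) = d$, and because $P$ is a path the BC-tree of $H_d$ is a path, so its central block has exactly two articulation points. The heart of the argument is to establish the two target inequalities already for $H_d$, with $(n_0, k_0, b_0)$ in place of $(n,k,b)$. If $b_0 > 0$, the degenerate starlike case $m = 2$ of Theorem~\ref{thm:star_cactus_osv} applies and gives
\[R(H_d) \ \geq\ \frac{n_0}{2} - (k_0-1)\left(\frac{3}{2} - \sqrt 2\right) - 3 + \frac{2}{\sqrt 3} + 2\sqrt{\frac{2}{3}},\]
while Lemma~\ref{lemma:rd_vs_n} gives $d \leq \frac{n_0 + k_0 + b_0 - 1}{2}$. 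Subtracting $d$ (respectively $d/2$), the terms in $n_0$ cancel and, using $\frac{3}{2} - \sqrt 2 + \frac{1}{2} = 2 - \sqrt 2$ and $\frac{3}{2} - \sqrt 2 + \frac{1}{4} = \frac{7}{4} - \sqrt 2$, the coefficient of $k_0 - 1$ rearranges so that one lands exactly on the two claimed right-hand sides (with $(n_0,k_0,b_0)$), with no leftover slack. If instead $b_0 = 0$, then $H_d$ is a nontrivial cactus with BC-tree a path, and Theorem~\ref{thm:ntc_d} gives a bound stronger than the required one by the positive constant $3 - \frac{2}{\sqrt 3} - 2\sqrt{\frac{2}{3}}$ (and with $b_0/2 = 0$), so the inequalities hold there too.

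Next I would build $G$ back from $H_d$ by re-attaching the remaining blocks one at a time, following the BC-tree outward from $H_d$ so that each new block shares exactly one articulation point with the current subgraph — a cycle glued at a vertex, or a pendant. Each such step keeps the diameter equal to $d$, since the current subgraph already contains $H_d$ and sits inside $G$, hence already realizes $d$; and it strictly increases $R$: Lemma~\ref{lemma:leaf} gives at least $\sqrt{d_v+1} - \sqrt{d_v} > 0$ for a pendant at $v$, and the identity in the proof of Lemma~\ref{lemma:cycle} together with $S \leq d_v$ (valid with no hypothesis on the neighbours of $v$) gives more than $\frac{s-2}{2}$ for a cycle of size $s$. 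For the $R - d$ bounds the right-hand side only decreases under these additions (by $\frac{1}{2}$ per bridge, by $2 - \sqrt 2$ per cycle), so monotonicity of $R$ alone suffices. For the $R - \frac{d}{2}$ bounds I would check block by block that $R$ rises by at least as much as the right-hand side: adding a bridge leaves $\frac{n-b}{4}$ unchanged, while adding a cycle of size $s$ raises the right-hand side by $\frac{s-1}{4} - \frac{7}{4} + \sqrt 2$, which is at most $\frac{s-2}{2}$ for $s \geq 3$ (equivalently $\frac{s+4}{4} \geq \sqrt 2$). Tracing equality backwards then forces equality in the valency bound (so $H_d$ has no adjacent articulation points and exactly two pendants, attached to its end cycles), equality in the diameter bound (so $H_d$ is longitudinally symmetric), and that no blocks get re-attached (so $G = H_d$) — precisely the stated equality configuration.

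The main obstacle is the second paragraph. One must be certain that Theorem~\ref{thm:star_cactus_osv} genuinely covers a path BC-tree — the degenerate $m = 2$ case, handled by its ``in particular'' clause — regardless of whether $H_d$ happens to carry pendants at its ends, and one must perform the cancellation between the valency bound and the diameter bound carefully enough to see that they collapse \emph{exactly} onto the two target right-hand sides rather than merely dominating them; the bookkeeping for the degenerate case $b_0 = 0$, where one falls back on Theorem~\ref{thm:ntc_d}, must also be checked. After that, the block-by-block induction and the equality analysis are routine given Lemmas~\ref{lemma:leaf}, \ref{lemma:cycle}, and \ref{lemma:realize_rd} and the earlier theorems.
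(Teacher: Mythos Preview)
Your proposal is correct and follows essentially the same route as the paper: pass to the diameter-realizing subgraph $H_d$ (whose BC-tree is a path), combine Theorem~\ref{thm:star_cactus_osv} with Lemma~\ref{lemma:rd_vs_n} to obtain the bounds there, then re-attach the remaining cycles and pendants using Lemmas~\ref{lemma:leaf} and~\ref{lemma:cycle}. Your treatment is in fact slightly more careful than the paper's in two places --- you separate out the degenerate case $b_0 = 0$ (falling back on Theorem~\ref{thm:ntc_d}), and you justify the cycle-addition step via the universally valid bound $S \leq d_v$ rather than invoking Lemma~\ref{lemma:cycle}(1) directly --- but the architecture is identical.
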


Again, we can derive a bound for $\frac{R}{d}$ for a cactus whose BC-tree is a path to get the bound below, and it appears to extend to the general case (see Conjecture~\ref{conj:d_new}), but we have some difficulty applying the inductive step:
\[\frac{R}{d} \geq \frac{ n - (k-1)\left(3 - 2\sqrt 2 \right) - 6 + \frac{4}{\sqrt 3} + 4\sqrt\frac{2}{3}}{n+k+b-1}.\]

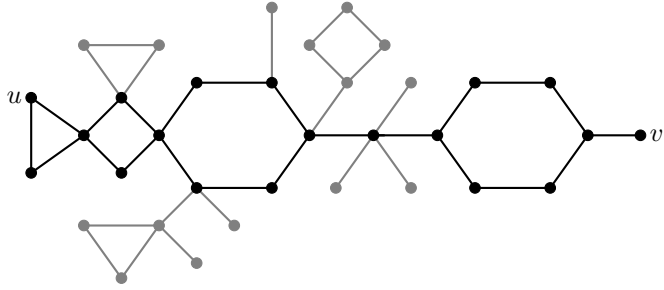
\begin{figure}
\begin{tikzpicture}
\draw[gray, thick] (0.4,-1.2) -- (-0.6,-1.2);
\draw[gray, thick] (0.4,-1.2) -- (-0.1,-1.9);
\draw[gray, thick] (-0.6,-1.2) -- (-0.1,-1.9);

\draw[gray, thick] (-0.1,0.5) -- (0.4,1.2);
\draw[gray, thick] (-0.1,0.5) -- (-0.6,1.2);
\draw[gray, thick] (0.4,1.2) -- (-0.6,1.2);

\draw[black, thick] (-0.6,0) -- (-1.3,0.5);
\draw[black, thick] (-0.6,0) -- (-1.3,-0.5);
\draw[black, thick] (-1.3,0.5) -- (-1.3,-0.5);

\draw[black, thick] (-0.1,0.5) -- (0.4,0);
\draw[black, thick] (-0.1,-0.5) -- (0.4,0);
\draw[black, thick] (-0.1,0.5) -- (-0.6,0);
\draw[black, thick] (-0.1,-0.5) -- (-0.6,0);

\draw[gray, thick] (0.9,-0.7) -- (0.4,-1.2);
\draw[gray, thick] (0.9,-0.7) -- (1.4,-1.2);
\draw[gray, thick] (0.9,-1.7) -- (0.4,-1.2);

\draw[black, thick] (0.4,0) -- (0.9,0.7);
\draw[black, thick] (0.4,0) -- (0.9,-0.7);
\draw[black, thick] (0.9,0.7) -- (1.9,0.7);
\draw[black, thick] (0.9,-0.7) -- (1.9,-0.7);
\draw[black, thick] (1.9,0.7) -- (2.4,0);
\draw[black, thick] (1.9,-0.7) -- (2.4,0);

\draw[gray, thick] (1.9,0.7) -- (1.9,1.7);

\draw[gray, thick] (2.4,0) -- (2.9,0.7);
\draw[black, thick] (2.4,0) -- (3.4,0);

\draw[gray, thick] (2.4,1.2) -- (2.9,0.7);
\draw[gray, thick] (3.4,1.2) -- (2.9,0.7);
\draw[gray, thick] (2.4,1.2) -- (2.9,1.7);
\draw[gray, thick] (3.4,1.2) -- (2.9,1.7);

\draw[gray, thick] (3.25,0) -- (3.75, 0.7);
\draw[black, thick] (3.25,0) -- (4.1, 0);

\draw[gray, thick] (3.25,0) -- (3.75, -0.7);
\draw[gray, thick] (3.25,0) -- (2.75, -0.7);

\draw[black, thick] (4.1,0) -- (4.6, 0.7);
\draw[black, thick] (4.1,0) -- (4.6, -0.7);
\draw[black, thick] (5.6,0.7) -- (4.6, 0.7);
\draw[black, thick] (5.6,-0.7) -- (4.6, -0.7);
\draw[black, thick] (5.6,0.7) -- (6.1, 0);
\draw[black, thick] (5.6,-0.7) -- (6.1, 0);

\draw[black, thick] (6.8,0) -- (6.1, 0);

\filldraw[gray] (-0.1,-1.9) circle (2pt);
\filldraw[gray] (-0.6,-1.2) circle (2pt);
\filldraw[gray] (-0.6,1.2) circle (2pt);
\filldraw[gray] (0.4,1.2) circle (2pt);
\filldraw[black] (-1.3,0.5) circle (2pt)node[anchor=east] {$u$};
\filldraw[black] (-1.3,-0.5) circle (2pt);
\filldraw[black] (-0.1,0.5) circle (2pt);
\filldraw[black] (-0.1,-0.5) circle (2pt);
\filldraw[black] (-0.6,0) circle (2pt);
\filldraw[gray] (0.4,-1.2) circle (2pt);
\filldraw[gray] (1.4,-1.2) circle (2pt);
\filldraw[gray] (0.9,-1.7) circle (2pt);
\filldraw[black] (0.4,0) circle (2pt);
\filldraw[black] (0.9,0.7) circle (2pt);
\filldraw[black] (0.9,-0.7) circle (2pt);
\filldraw[black] (1.9,0.7) circle (2pt);
\filldraw[gray] (1.9,1.7) circle (2pt);
\filldraw[black] (1.9,-0.7) circle (2pt);
\filldraw[black] (2.4,0) circle (2pt);
\filldraw[gray] (2.9,0.7) circle (2pt);
\filldraw[black] (3.25,0) circle (2pt);
\filldraw[gray] (3.75,0.7) circle (2pt);
\filldraw[gray] (3.75,-0.7) circle (2pt);
\filldraw[gray] (2.75,-0.7) circle (2pt);
\filldraw[gray] (2.9,1.7) circle (2pt);
\filldraw[black] (4.1,0) circle (2pt);
\filldraw[gray] (2.4,1.2) circle (2pt);
\filldraw[gray] (3.4,1.2) circle (2pt);
\filldraw[black] (4.6,0.7) circle (2pt);
\filldraw[black] (4.6,-0.7) circle (2pt);
\filldraw[black] (5.6,0.7) circle (2pt);
\filldraw[black] (5.6,-0.7) circle (2pt);
\filldraw[black] (6.1,0) circle (2pt);
\filldraw[black] (6.8,0) circle (2pt)node[anchor=west] {$v$};
\end{tikzpicture}
\caption{A cactus, a graph with cycles and bridges. The diameter is realized by the smallest connected subgraph $H_d$ containing two most distant vertices $u$ and $v$.}\label{fig:cactus_d}
\end{figure}

\begin{proof}
As we did in the previous theorems, we will start with a subgraph $H=H_d$ as in Lemma~\ref{lemma:realize_rd} or the dark subgraph in Figure~\ref{fig:cactus_d}, verify the bounds, and then add additional cycles and bridges.

Note that $H_d$ has BC-tree a path. By Theorem~\ref{thm:star_cactus_osv} and Lemma~\ref{lemma:rd_vs_n}:
\[R - d \geq -\frac{b}{2} - (k-1)\left(2 - \sqrt 2 \right) - 3 + \frac{2}{\sqrt 3} + 2\sqrt\frac{2}{3}.\]
and
\[R - \frac{d}{2} \geq \frac{n-b}{4} - (k-1)\left(\frac{7}{4} - \sqrt 2 \right) - 3 + \frac{2}{\sqrt 3} + 2\sqrt\frac{2}{3}.\]
Equality is possible for the bound on $R$ if there are two bridge which are both pendants, and no cycles have adjacent articulation points; it is possible for the bound on $d$ if additionally the graph is longitudinally symmetric.

Now add cycles and pendants as required to fill out the graph. This increases $R$ by Lemmas~\ref{lemma:leaf} and \ref{lemma:cycle} while leaving $d$ unchanged by assumption; the right-hand side of the bound on $R-d$ is decreasing in both $b$ and $k$, so it holds inductively. For $R-\frac{d}{2}$, note that adding a bridge leaves the right-hand side unaltered, while adding a cycle changes it by $\frac{s}{4} - 2 + \sqrt 2$, while $R$ itself increases by at least $\frac{s}{2} - 1$, so the inequality holds inductively since $s \geq 3$.
\end{proof}

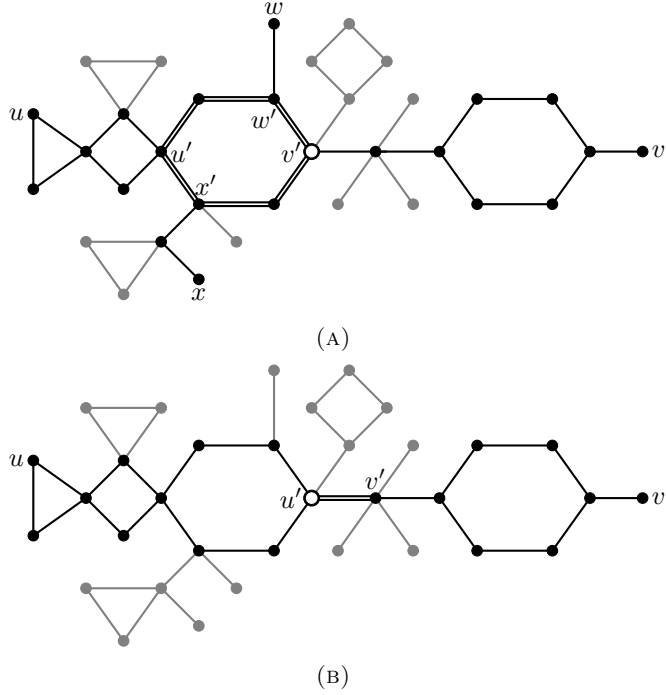
\begin{figure}
\begin{center}
\begin{subfigure}[t]{0.7\textwidth}
\begin{tikzpicture}
\draw[gray, thick] (0.4,-1.2) -- (-0.6,-1.2);
\draw[gray, thick] (0.4,-1.2) -- (-0.1,-1.9);
\draw[gray, thick] (-0.6,-1.2) -- (-0.1,-1.9);

\draw[gray, thick] (-0.1,0.5) -- (0.4,1.2);
\draw[gray, thick] (-0.1,0.5) -- (-0.6,1.2);
\draw[gray, thick] (0.4,1.2) -- (-0.6,1.2);

\draw[black, thick] (-0.6,0) -- (-1.3,0.5);
\draw[black, thick] (-0.6,0) -- (-1.3,-0.5);
\draw[black, thick] (-1.3,0.5) -- (-1.3,-0.5);

\draw[black, thick] (-0.1,0.5) -- (0.4,0);
\draw[black, thick] (-0.1,-0.5) -- (0.4,0);
\draw[black, thick] (-0.1,0.5) -- (-0.6,0);
\draw[black, thick] (-0.1,-0.5) -- (-0.6,0);

\draw[black, thick] (0.9,-0.7) -- (0.4,-1.2);
\draw[gray, thick] (0.9,-0.7) -- (1.4,-1.2);
\draw[black, thick] (0.9,-1.7) -- (0.4,-1.2);

\draw[black, double, thick] (0.4,0) -- (0.9,0.7);
\draw[black, double, thick] (0.4,0) -- (0.9,-0.7);
\draw[black, double, thick] (0.9,0.7) -- (1.9,0.7);
\draw[black, double, thick] (0.9,-0.7) -- (1.9,-0.7);
\draw[black, double, thick] (1.9,0.7) -- (2.4,0);
\draw[black, double, thick] (1.9,-0.7) -- (2.4,0);

\draw[black, thick] (1.9,0.7) -- (1.9,1.7);

\draw[gray, thick] (2.4,0) -- (2.9,0.7);
\draw[black, thick] (2.4,0) -- (3.4,0);

\draw[gray, thick] (2.4,1.2) -- (2.9,0.7);
\draw[gray, thick] (3.4,1.2) -- (2.9,0.7);
\draw[gray, thick] (2.4,1.2) -- (2.9,1.7);
\draw[gray, thick] (3.4,1.2) -- (2.9,1.7);

\draw[gray, thick] (3.25,0) -- (3.75, 0.7);
\draw[black, thick] (3.25,0) -- (4.1, 0);

\draw[gray, thick] (3.25,0) -- (3.75, -0.7);
\draw[gray, thick] (3.25,0) -- (2.75, -0.7);

\draw[black, thick] (4.1,0) -- (4.6, 0.7);
\draw[black, thick] (4.1,0) -- (4.6, -0.7);
\draw[black, thick] (5.6,0.7) -- (4.6, 0.7);
\draw[black, thick] (5.6,-0.7) -- (4.6, -0.7);
\draw[black, thick] (5.6,0.7) -- (6.1, 0);
\draw[black, thick] (5.6,-0.7) -- (6.1, 0);

\draw[black, thick] (6.8,0) -- (6.1, 0);

\filldraw[gray] (-0.1,-1.9) circle (2pt);
\filldraw[gray] (-0.6,-1.2) circle (2pt);
\filldraw[gray] (-0.6,1.2) circle (2pt);
\filldraw[gray] (0.4,1.2) circle (2pt);
\filldraw[black] (-1.3,0.5) circle (2pt)node[anchor=east] {$u$};
\filldraw[black] (-1.3,-0.5) circle (2pt);
\filldraw[black] (-0.1,0.5) circle (2pt);
\filldraw[black] (-0.1,-0.5) circle (2pt);
\filldraw[black] (-0.6,0) circle (2pt);
\filldraw[black] (0.4,-1.2) circle (2pt);
\filldraw[gray] (1.4,-1.2) circle (2pt);
\filldraw[black] (0.9,-1.7) circle (2pt)node[anchor=north] {$x$};
\filldraw[black] (0.4,0) circle (2pt)node[anchor=west] {$u'$};
\filldraw[black] (0.9,0.7) circle (2pt);
\filldraw[black] (0.9,-0.7) circle (2pt)node[anchor=south] {$\phantom{x}x'$};
\filldraw[black] (1.9,0.7) circle (2pt)node[anchor=north] {$w'\phantom{w}$};
\filldraw[black] (1.9,1.7) circle (2pt)node[anchor=south] {$w$};
\filldraw[black] (1.9,-0.7) circle (2pt);
\filldraw[black] (2.4,0) circle (3pt)node[anchor=east] {$v'$};
\filldraw[white](2.4,0) circle(2pt);
\filldraw[gray] (2.9,0.7) circle (2pt);
\filldraw[black] (3.25,0) circle (2pt);
\filldraw[gray] (3.75,0.7) circle (2pt);
\filldraw[gray] (3.75,-0.7) circle (2pt);
\filldraw[gray] (2.75,-0.7) circle (2pt);
\filldraw[gray] (2.9,1.7) circle (2pt);
\filldraw[black] (4.1,0) circle (2pt);
\filldraw[gray] (2.4,1.2) circle (2pt);
\filldraw[gray] (3.4,1.2) circle (2pt);
\filldraw[black] (4.6,0.7) circle (2pt);
\filldraw[black] (4.6,-0.7) circle (2pt);
\filldraw[black] (5.6,0.7) circle (2pt);
\filldraw[black] (5.6,-0.7) circle (2pt);
\filldraw[black] (6.1,0) circle (2pt);
\filldraw[black] (6.8,0) circle (2pt)node[anchor=west] {$v$};
\end{tikzpicture}
\subcaption{}\label{fig:cactus_r1}
\end{subfigure}
\end{center}
\begin{subfigure}[t]{0.7\textwidth}
\begin{center}
\begin{tikzpicture}
\draw[gray, thick] (0.4,-1.2) -- (-0.6,-1.2);
\draw[gray, thick] (0.4,-1.2) -- (-0.1,-1.9);
\draw[gray, thick] (-0.6,-1.2) -- (-0.1,-1.9);

\draw[gray, thick] (-0.1,0.5) -- (0.4,1.2);
\draw[gray, thick] (-0.1,0.5) -- (-0.6,1.2);
\draw[gray, thick] (0.4,1.2) -- (-0.6,1.2);

\draw[black, thick] (-0.6,0) -- (-1.3,0.5);
\draw[black, thick] (-0.6,0) -- (-1.3,-0.5);
\draw[black, thick] (-1.3,0.5) -- (-1.3,-0.5);

\draw[black, thick] (-0.1,0.5) -- (0.4,0);
\draw[black, thick] (-0.1,-0.5) -- (0.4,0);
\draw[black, thick] (-0.1,0.5) -- (-0.6,0);
\draw[black, thick] (-0.1,-0.5) -- (-0.6,0);

\draw[gray, thick] (0.9,-0.7) -- (0.4,-1.2);
\draw[gray, thick] (0.9,-0.7) -- (1.4,-1.2);
\draw[gray, thick] (0.9,-1.7) -- (0.4,-1.2);

\draw[black, thick] (0.4,0) -- (0.9,0.7);
\draw[black, thick] (0.4,0) -- (0.9,-0.7);
\draw[black, thick] (0.9,0.7) -- (1.9,0.7);
\draw[black, thick] (0.9,-0.7) -- (1.9,-0.7);
\draw[black, thick] (1.9,0.7) -- (2.4,0);
\draw[black, thick] (1.9,-0.7) -- (2.4,0);

\draw[gray, thick] (1.9,0.7) -- (1.9,1.7);

\draw[gray, thick] (2.4,0) -- (2.9,0.7);
\draw[black, double, thick] (2.4,0) -- (3.25,0);

\draw[gray, thick] (2.4,1.2) -- (2.9,0.7);
\draw[gray, thick] (3.4,1.2) -- (2.9,0.7);
\draw[gray, thick] (2.4,1.2) -- (2.9,1.7);
\draw[gray, thick] (3.4,1.2) -- (2.9,1.7);

\draw[gray, thick] (3.25,0) -- (3.75, 0.7);
\draw[black, thick] (3.25,0) -- (4.1, 0);

\draw[gray, thick] (3.25,0) -- (3.75, -0.7);
\draw[gray, thick] (3.25,0) -- (2.75, -0.7);

\draw[black, thick] (4.1,0) -- (4.6, 0.7);
\draw[black, thick] (4.1,0) -- (4.6, -0.7);
\draw[black, thick] (5.6,0.7) -- (4.6, 0.7);
\draw[black, thick] (5.6,-0.7) -- (4.6, -0.7);
\draw[black, thick] (5.6,0.7) -- (6.1, 0);
\draw[black, thick] (5.6,-0.7) -- (6.1, 0);

\draw[black, thick] (6.8,0) -- (6.1, 0);

\filldraw[gray] (-0.1,-1.9) circle (2pt);
\filldraw[gray] (-0.6,-1.2) circle (2pt);
\filldraw[gray] (-0.6,1.2) circle (2pt);
\filldraw[gray] (0.4,1.2) circle (2pt);
\filldraw[black] (-1.3,0.5) circle (2pt)node[anchor=east] {$u$};
\filldraw[black] (-1.3,-0.5) circle (2pt);
\filldraw[black] (-0.1,0.5) circle (2pt);
\filldraw[black] (-0.1,-0.5) circle (2pt);
\filldraw[black] (-0.6,0) circle (2pt);
\filldraw[gray] (0.4,-1.2) circle (2pt);
\filldraw[gray] (1.4,-1.2) circle (2pt);
\filldraw[gray] (0.9,-1.7) circle (2pt);
\filldraw[black] (0.4,0) circle (2pt);
\filldraw[black] (0.9,0.7) circle (2pt);
\filldraw[black] (0.9,-0.7) circle (2pt);
\filldraw[black] (1.9,0.7) circle (2pt);
\filldraw[gray] (1.9,1.7) circle (2pt);
\filldraw[black] (1.9,-0.7) circle (2pt);
\filldraw[black] (2.4,0) circle (3pt)node[anchor=east] {$u'$};
\filldraw[white](2.4,0) circle(2pt);
\filldraw[gray] (2.9,0.7) circle (2pt);
\filldraw[black] (3.25,0) circle (2pt)node[anchor=south] {$v'$};
\filldraw[gray] (3.75,0.7) circle (2pt);
\filldraw[gray] (3.75,-0.7) circle (2pt);
\filldraw[gray] (2.75,-0.7) circle (2pt);
\filldraw[gray] (2.9,1.7) circle (2pt);
\filldraw[black] (4.1,0) circle (2pt);
\filldraw[gray] (2.4,1.2) circle (2pt);
\filldraw[gray] (3.4,1.2) circle (2pt);
\filldraw[black] (4.6,0.7) circle (2pt);
\filldraw[black] (4.6,-0.7) circle (2pt);
\filldraw[black] (5.6,0.7) circle (2pt);
\filldraw[black] (5.6,-0.7) circle (2pt);
\filldraw[black] (6.1,0) circle (2pt);
\filldraw[black] (6.8,0) circle (2pt)node[anchor=west] {$v$};
\end{tikzpicture}
\subcaption{}\label{fig:cactus_r2}
\end{center}
\end{subfigure}
\caption{A cactus with one center an articulation point (hollow vertex) which is part of three different blocks. Two of these blocks contain vertices $u$ and $v$ of distance $r$ from the center, so we may have two options for the central block, (A) the cycle with four articulation points and BC-tree starlike with four arms, or (B) the bridge to the right with two articulation points and BC-tree a path as in Figure~\ref{fig:cactus_d}. In either case, the dark black subgraph $H_r$ realizes the radius of $G$.}\label{fig:cactus_r}
\end{figure}

To study the radius, we follow the pattern of Theorem~\ref{thm:ntc_r} with the added complication of possible bridges. This bound is stronger than Conjecture~\ref{conj:r} unless $k=0$, and it allows us to complete the proof of Corollary~\ref{cor:r} since $-\frac{5}{2} + \frac{2}{\sqrt 3} + 2 \sqrt\frac{2}{3} \approx 0.2$.

\begin{thm}\label{thm:cactus_r}
Let $G$ be a cactus with $k>0$ cycles and $b>0$ bridges. Then
\[R - r > (k-1)\left(\sqrt2-1\right).\]
In fact, if a central block has $m\geq 2$ articulation points, then
\[R-r \geq (k-1)\left(\sqrt2-1\right)+m\left(\frac{1}{\sqrt 3} + \sqrt\frac{2}{3}-1\right) - \frac{1}{2}.\]
\end{thm}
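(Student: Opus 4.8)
The plan is to reuse the template of Theorems~\ref{thm:ntc_d}, \ref{thm:ntc_r}, and \ref{thm:cactus_d}: first replace $G$ by a subgraph on which the bound can be read off directly, then rebuild $G$ one block at a time. I would take $H=H_r$ as furnished by Lemma~\ref{lemma:realize_rd}, so that $H$ has the same radius $r$, the same centers, and the same central block $B$ as $G$; by construction the BC-tree of $H$ is starlike with root corresponding to $B$, say of degree $m$, and every other block of $H$ is a pendant, a bridge, or a cycle carrying exactly two articulation points (see Figure~\ref{fig:cactus_r}). Thus $H$ is of exactly the type to which Theorem~\ref{thm:star_cactus_osv} applies.

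Suppose first that $B$ has $m\geq 2$ articulation points, and write $n'$, $k'$ for the order and cycle count of $H$. Theorem~\ref{thm:star_cactus_osv} gives
\[R(H)\ \geq\ \frac{n'}{2}-(k'-1)\left(\frac{3}{2}-\sqrt 2\right)-m\left(\frac{3}{2}-\frac{1}{\sqrt 3}-\sqrt{\frac{2}{3}}\right),\]
while the refined radius estimate of Lemma~\ref{lemma:rd_vs_n} for a central block with $m\geq 2$ articulation points gives $r=r(H)\leq\frac{n'-k'-m+2}{2}$. Subtracting, the terms in $n'$ cancel, $\frac{n'}{2}-\frac{n'-k'-m+2}{2}=\frac{k'+m-2}{2}$, and regrouping the constants collapses the estimate to
\[R(H)-r\ \geq\ (k'-1)\left(\sqrt 2-1\right)+m\left(\frac{1}{\sqrt 3}+\sqrt{\frac{2}{3}}-1\right)-\frac{1}{2},\]
which is precisely the asserted bound for $H$. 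To recover $G$, I would add back the missing blocks, inserting all cycles at a given articulation point before any pendants so that Lemma~\ref{lemma:cycle} is available: by Lemma~\ref{lemma:realize_rd} the radius remains $r$ and the central block, hence $m$, is unchanged throughout; each reinstated cycle of size $s$ raises $R$ by at least $\frac{s-2}{2}\geq\frac12>\sqrt 2-1$ while raising the right-hand side by only $\sqrt 2-1$; and each reinstated bridge or pendant raises $R$ by Lemma~\ref{lemma:leaf} while leaving the right-hand side fixed. The inequality therefore transfers to $G$, and the coarse bound $R-r>(k-1)(\sqrt 2-1)$ follows at once since $m\left(\frac{1}{\sqrt 3}+\sqrt{\frac{2}{3}}-1\right)-\frac12>0$ as soon as $m\geq 2$.

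Finally one must settle $m\leq 1$ for the coarse bound. Since $G$ contains both a cycle and a bridge it has more than one block, so the central block carries at least one articulation point; hence only $m=1$ remains, in which case the BC-tree of $H_r$ is a path and the same argument, using the weaker estimate $r\leq\frac{n-k}{2}$ of Lemma~\ref{lemma:rd_vs_n} together with Theorem~\ref{thm:cactus_osv} (or Theorem~\ref{thm:star_cactus_osv}), already gives $R-r>(k-1)(\sqrt 2-1)$ with room to spare. The part I expect to demand the most care is not the arithmetic but the structural bookkeeping around $H_r$: verifying that $H_r$ genuinely realizes $r$ and has the claimed starlike block structure, so that Theorem~\ref{thm:star_cactus_osv} applies with the correct value of $m$; and dispatching the degenerate configurations — a central block that is itself a bridge, a cactus for which $H_r$ happens to contain no bridge at all (where $H_r$ is a nontrivial cactus and one falls back on the estimates behind Theorem~\ref{thm:ntc_r}, reading off $R(H_r)$ from Theorem~\ref{thm:cactus_osv} applied to its starlike block structure), and the small-radius cases — each of which should be routine but needs to be handled explicitly.
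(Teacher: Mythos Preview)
Your proposal is correct and follows essentially the same route as the paper: pass to $H_r$ via Lemma~\ref{lemma:realize_rd}, combine Theorem~\ref{thm:star_cactus_osv} with the refined radius estimate of Lemma~\ref{lemma:rd_vs_n} for $m\geq 2$, then rebuild $G$ block by block using Lemmas~\ref{lemma:leaf} and~\ref{lemma:cycle}; the arithmetic you display is exactly what the paper uses. The one substantive difference is the disposal of small $m$: the paper argues directly that $m=1$ cannot occur for $H_r$ (if the central block were a leaf of the BC-tree of $H_r$, then either $H_r$ collapses to that block and $m=0$, or an adjacent vertex is also a center and a different block is central), and folds $m=0$ into Theorem~\ref{thm:ntc_r}, whereas you leave $m=1$ as a separate computation---this works, but the paper's structural observation is cleaner and eliminates the case analysis you flag in your final paragraph. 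One small correction: when rebuilding, a pendant attached at a degree-$2$ vertex of the central block creates a new articulation point and so does raise $m$ on the right-hand side by $\frac{1}{\sqrt 3}+\sqrt{\frac{2}{3}}-1$; the paper notes this and observes that the matching increase $\sqrt 2-1$ in $R$ from Lemma~\ref{lemma:leaf} dominates, so the inequality still propagates.
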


\begin{proof} 
We will follow Theorem~\ref{thm:ntc_r}, defining a subgraph $H_r$ as in Lemma~\ref{lemma:realize_rd} and Figure~\ref{fig:cactus_r} with the desired radius, verifying the bounds, and then adding additional cycles and bridges. 


If $k=0$, the bound holds by Theorem~\ref{thm:tree_rd}; if $m=0$, by Theorem~\ref{thm:ntc_r}. Note $m$ is never 1 because then the BC-tree of $H_r$ would be be a path with the central block a leaf. If this central block had radius $r$, then $H_r$ would consist of only that block and $m$ would actually be 0; if the central block had smaller radius, then some adjacent vertex in the next block would also be a center and that block would therefore be the central block. Assuming $m \geq 2$ and $k > 0$, Theorem~\ref{thm:star_cactus_osv} and Lemma~\ref{lemma:rd_vs_n} imply:
\[(R-r)(H_r) \geq (k-1)\left(\sqrt2-1\right)+m\left(\frac{1}{\sqrt 3} + \sqrt\frac{2}{3}-1\right) - \frac{1}{2} > 0.\]

We then fill out $R$ by adding extra and cycles (which by Lemma~\ref{lemma:cycle} increases the left by at least $\frac{1}{2}$ and the right by exactly $\sqrt 2 - 1$) and pendants (which by Lemma~\ref{lemma:leaf} increases the left - in particular, it increases it by $\sqrt 2-1$ if the pendant is added at a degree two vertex; on the other hand, it leaves the right unchanged unless it is a pendant attached to a degree 2 vertex on the central cycle). 
\end{proof}

\section{Future Work}

The ideas at work here may be applicable to generic graphs as well, but it would involve better understanding blocks which are more complicated than cycles or bridges, i.e., understanding the relationship between the Randi\'c index and radius and diameter for graphs without articulation points, as well as understanding how $R$ may change when a generic block is added or removed. 

Additionally, Conjecture~\ref{conj:d_new} suggests some interesting possibilities; when blocks were added or removed above, our primary lemmata dealt with the additive changes to $R$, which enabled studying $R-r$, $R-d$, etc. Extending these results to the multiplicative $\frac{R}{d}$ for trees was possible because the resulting bound was decreasing in $n$; however, the proposed bounds for cacti are also decreasing with the addition of a bridge but not necessarily with the addition of a cycle, which poses a challenge. Perhaps the answer lies in bounding the maximum possible size of a cycle which may be added without affecting diameter, or in the order in which blocks are added.

Next, it is very tempting to look for nicer formulations of the bounds on $R$, perhaps a general one for all cacti which clearly reduces to the desired ones for trees and nontrivial cacti. The challenge seems to be that a graph with both cycles and bridges has $R$ not quite the sum of what would by indicated by examining the cycles or bridges alone; perhaps this indicates the inclusion of a term counting collisions between adjacent blocks of different sorts, i.e., a bridge and a cycle next to one another cause problems.

Finally, the success of theorems like Corollary~\ref{cor:valency} on approximating $R$ via vertex valency suggests that perhaps a separate invariant be defined, one based on vertex valency. How far good would such an invariant be at approximating physicochemical properties of molecules? At studying graph properties like branching?

\bibliographystyle{alpha}
\bibliography{../biblio-MASTER}

\end{document}